\DeclareMathOperator{\co}{co}
\newtheorem{theorem}{Theorem}
\newtheorem{proposition}{Proposition}
\newtheorem{lemma}{Lemma}
\newtheorem{remark}{Remark}
\begin{document}
%
\def\R {{\mathbb{R}}}
\def\N {{\mathbb{N}}}
\def\C {{\mathbb{C}}}
\def\Z {{\mathbb{Z}}}
\def\phi{\varphi}
\def\epsilon{\varepsilon}
\def\ma{{\mathcal A}}
%
\def\tb#1{\|\kern -1.2pt | #1 \|\kern -1.2pt |} 
\def\Qed{\qed\par\medskip\noindent}
%

\title[The distance function]{The distance function \\
in the presence of an obstacle}  
\author{Paolo Albano} 
\address{Dipartimento di Matematica, Universit\`a di Bologna, Piazza di Porta San Donato 5, 40127 Bologna, Italy} 
\email{paolo.albano@unibo.it}
 \author{Vincenzo Basco}
\address{Thales Alenia Space, via Saccomuro, 24, Roma, Italy}
\email{vincenzobasco@thalesaleniaspace.com, vincenzobasco@gmail.com}
\author{Piermarco Cannarsa} 
\address{Dipartimento di Matematica, Universit\`a di Roma "Tor Vergata",  Roma, Italy} 
\email{cannarsa@mat.uniroma2.it}

\date{\today}

\begin{abstract}
We study the Riemannian distance function from a fixed point (a point-wise target) of Euclidean space in the presence of a compact obstacle bounded by a smooth hypersurface. First, we show that such a function is locally semiconcave  with a fractional modulus of order one half and that, near the obstacle,  this regularity is optimal. Then, in the Euclidean setting, we prove that the singularities of the distance function propagate, in the sense that each singular point belongs to a nontrivial singular continuum. 
Finally, we investigate  the lack of differentiability of the distance function when a convex obstacle is present.
\end{abstract}

\subjclass[2010]{49J52, 26A27, 26B25, 49L2}
\keywords{distance function, state contraints, semiconcave functions, singularities}

\maketitle

\section{Introduction and statement of the results}
\setcounter{equation}{0}
\setcounter{theorem}{0}
\setcounter{proposition}{0}  
\setcounter{lemma}{0}
\setcounter{corollary}{0} 
\setcounter{definition}{0}

We study the distance function $d$ from a fixed point of $\R^n$ in the presence of an obstacle, $\mathcal{O}$, bounded by a  $C^2$ hypersurface. 
We are interested in two main issues, namely:
\begin{itemize}
\item the regularity of $d$ up to the boundary of $\mathcal{O}$, and
\item    the analysis of the singularities of $d$
 (i.e. points where the distance function is not differentiable) on $\overline{\R^n\setminus \mathcal{O}}$.
\end{itemize}
For simplicity, we may assume that 
 \vspace{0,5cm}

 \noindent
(O) $\mathcal{O}$ is the closure of a bounded connected open subset of $\R^n$ with boundary of class $C^2$, such that $\R^n\setminus \mathcal{O}$ is connected. 

\vspace{0,5cm} 

In order to define the distance function, let us consider a family of positive definite quadratic forms with $C^2$ coefficients 
$$
\R^n\ni x\mapsto A(x) ,
$$
and the set of all the ``subunit'' curves 
$$
\Gamma =\{ \gamma \in AC(0,+\infty ; \overline{\R^n\setminus \mathcal{O}} )\ | \   \text{for a.e. }t\geq 0,\,  \langle A(\gamma (t)) \dot{\gamma}(t), \dot{\gamma}(t)\rangle \le 1\}. 
$$
For every $x\in  \overline{\R^n\setminus \mathcal{O}}$, we define 
$$
\Gamma [x]=\{ \gamma\in \Gamma \ | \  \gamma (0)=x \}.
$$
Let $k_0\in \R^n\setminus \mathcal{O}$ be fixed,  set 
$$
\tau (\gamma )=\inf \{ t\geq 0\ |\ \gamma (t)=k_0\}\in [0,+\infty ],
$$
and  consider the following constrained minimization problem
\begin{equation}\label{min}
\inf_{\gamma \in \Gamma [x]} \tau (\gamma ) .
\end{equation}
We observe that the problem above can be seen as a  constrained minimum time problem with a point-wise target.  
Then, the distance function of $x$ from $k_0$ is given by  
\begin{equation}\label{d}
d(x)=\inf_{\gamma \in \Gamma [x]}\tau  (\gamma),\qquad x\in \overline{\R^n\setminus\mathcal{O}}.
\end{equation}
\begin{remark}\label{r:1}
We observe that the assumption ``$\R^n\setminus \mathcal{O}$ is connected'' ensures that  $\Gamma [x]\not=\emptyset$, 
 for every $x\in \overline{\R^n\setminus\mathcal{O}}$, i.e.,
$d(x)$ is finite for every $x\in \overline{\R^n\setminus\mathcal{O}}$.
\end{remark} 
We recall that $d$ is the viscosity solution of a suitable boundary value problem for  the eikonal equation (see, e.g., Theorem X.1  in \cite{CDL}) 
\begin{equation}\label{eq:iconale}
\langle A^{-1}(x) Dd(x),Dd(x)\rangle =1\qquad \text{ in } \R^n\setminus ( \mathcal{O}\cup \{k_0\}),  
\end{equation} 
where $A^{-1}$ denotes the inverse matrix of $A$. 

Since $d$ is the value function of a minimum time problem with state constraints, $d$ is expected to be ``more'' than Lipschitz continuous but ``less'' than  differentiable.  The appropriate regularity class for $d$ is the one of semiconcave functions with fractional modulus. 

Given a set $U\subset \R^n$, we say that $u:U\longrightarrow \R$ is a fractionally semiconcave function on $U$ of exponent  $\alpha \in ]0,1]$   if $u$ is locally Lipschitz continuous on $U$ and   
there exists  $C>0$ such that 
\begin{equation}\label{eq:sca}
\lambda u(x)+(1-\lambda) u(y)-u(\lambda x + (1-\lambda )y)\le C\lambda (1-\lambda) |x-y|^{1+\alpha},
\end{equation}
for any $x,y\in U$   such that the line segment $[x,y]$ is contained in $U$ and for every $\lambda\in [0,1]$. In the case $\alpha =1$ we say that $u$ is semiconcave with linear modulus or linearly semiconcave. Furthermore, we call the constant $C$ given in \eqref{eq:sca} a semiconcavity constant for $u$ in $U$. We call $C$ a linear semiconcavity constant for $u$ when $\alpha =1$. 
We denote by $SC^\alpha (U)$ the set of all the fractionally semiconcave functions of exponent $\alpha$ in $U$. Such a property can obviously be made local, in which case we refer to the space $SC_{loc}^\alpha (U)$.
\begin{remark}
We recall that, if $U$ is an open set with $\partial U\not=\emptyset$ and $u$ satisfies \eqref{eq:sca}  on every compact set $K\subset\overline U$ for some constant $C_K$, then $u$ is locally Lipschitz continuous on $U$ (see e.g. \cite{CS})  but not on $\partial U$, in general. For this reason, in the above definition, we require $u$ to be 
locally Lipschitz continuous on $U$. 
\end{remark} 
For any function $u\in SC_{loc}^\alpha (U)$ and any fixed $x\in U$, we denote by $D^*u(x)$ the nonempty compact set of {\em reachable gradients} of $u$ at $x$, i.e.,
\begin{multline}\label{eq:dstar}
D^*u(x)=\{ p\in \R^n\ | \  \exists x_h\in \mbox{int}(U), 
x_h\to x, \  \exists Du(x_h)\to p\} ,
\end{multline}
where $\mbox{int}(U)$ stands for the interior of $U$.

We recall the following interior regularity property which is a direct consequence of the results in \cite{L} and \cite{A3}. 
\begin{theorem}[Interior regularity] \label{t:i}
Let $u$ be a viscosity solution of Equation \eqref{eq:iconale} and assume $x\mapsto A(x)$ to be a map of class $C^2$ taking values in the set of all positive definite $n\times n$ matrices. Then, we have that $u\in SC_{loc}^1( \R^n\setminus (\mathcal{O}\cup \{ k_0\} ))$.  
\end{theorem}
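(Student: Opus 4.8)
The plan is to deduce the statement from the general interior-regularity theory for viscosity solutions of convex Hamilton--Jacobi equations, as developed in \cite{L} and \cite{A3}, after recasting \eqref{eq:iconale} in a form to which that theory applies. Set $\Omega:=\R^n\setminus(\mathcal{O}\cup\{k_0\})$, an open set, and write \eqref{eq:iconale} as $H(x,Du)=1$ on $\Omega$, where $H(x,p)=\langle A^{-1}(x)p,p\rangle$. Since $x\mapsto A(x)$ is of class $C^2$ with values in the positive definite symmetric matrices, the map $x\mapsto A^{-1}(x)$ is of class $C^2$ on $\Omega$ as well, so $H\in C^2(\Omega\times\R^n)$, and for each $x$ the map $p\mapsto H(x,p)$ is a positive definite quadratic form, uniformly so on compact subsets of $\Omega$ (its Hessian is $2A^{-1}(x)$). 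Thus $u$ is a viscosity solution, on an open set, of a Hamilton--Jacobi equation with $C^2$ Hamiltonian that is locally uniformly convex in $p$.

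First I would check that $u$ is locally Lipschitz on $\Omega$. On any compact $K\subset\Omega$ there is $\Lambda_K>0$ with $\langle A^{-1}(x)\xi,\xi\rangle\ge\Lambda_K^{-1}|\xi|^2$ for all $x\in K$ and $\xi\in\R^n$; hence $|Du(x)|^2\le\Lambda_K$ at every differentiability point $x\in K$, and by Rademacher's theorem $u$ is $\sqrt{\Lambda_K}$-Lipschitz on convex subsets of $K$, so locally Lipschitz on $\Omega$. (This is in any case contained in the well-posedness theory used in Theorem~X.1 of \cite{CDL}.)

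With these two facts in hand, the conclusion $u\in SC^1_{loc}(\Omega)$ is exactly the interior semiconcavity property established in \cite{L} and \cite{A3}, which I would invoke directly. For orientation I would recall the mechanism. Fix $x_0\in\Omega$ and a ball $B(x_0,\delta)$ whose closure lies in $\Omega$, with $\delta$ so small that the obstacle and the point $k_0$ play no role. Integrating the characteristic system $\dot x=H_p(x,p)$, $\dot p=-H_x(x,p)$---whose right-hand side is $C^1$, because $H\in C^2$, so that the associated flow is $C^1$ jointly in time and in the initial data---one obtains on $B(x_0,\delta)$ a representation
$$
u(x)=\min_{q\in Q}\bigl(a(q)+\Phi(q,x)\bigr),
$$
where $Q$ is a compact set near $x_0$ (a piece of a level set of $u$ met by the backward characteristics), $a$ is continuous on $Q$, and each $\Phi(q,\cdot)$ is a classical solution of \eqref{eq:iconale} near $x_0$: since the characteristics of \eqref{eq:iconale} are the unit-speed geodesics of the metric $A$ and the associated action is arc length, $\Phi(q,\cdot)$ is a local geodesic-distance function from $q$, hence linearly semiconcave on $B(x_0,\delta)$ with a semiconcavity constant that is uniform in $q\in Q$ (away from the base point and the cut locus, such uniformity is classical for a $C^2$ metric). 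As a minimum of linearly semiconcave functions sharing one semiconcavity constant---and sharing one local Lipschitz constant---$u$ is linearly semiconcave on $B(x_0,\delta)$; since $x_0\in\Omega$ is arbitrary, $u\in SC^1_{loc}(\Omega)$.

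The only genuinely non-routine step is the construction of this local representation: one must verify that backward characteristics issued from points of $B(x_0,\delta)$ remain for a short time inside a region where \eqref{eq:iconale} admits classical solutions depending in a controlled way on their Cauchy data, with the corresponding family equi-Lipschitz and equi-semiconcave. This is where the $C^2$ regularity of $A$ is essential, since it gives the $C^1$ regularity of the characteristic flow, and it is precisely the technical core carried out in \cite{L} and \cite{A3}; I would quote those results rather than reprove them here.
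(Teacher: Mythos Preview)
Your proposal is correct and aligns with the paper's own treatment: the paper does not give a proof of this theorem at all, merely stating that it ``is a direct consequence of the results in \cite{L} and \cite{A3}.'' You invoke exactly the same references, while additionally verifying the hypotheses and sketching the underlying mechanism (the characteristic representation as a minimum of an equi-semiconcave family), which is more than the paper itself provides.
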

Our first goal is to study the regularity of $d$ up to the boundary of the obstacle $\mathcal{O}$. We have the following 
\begin{theorem}[Boundary Regularity]\label{t:rb}
 Under Assumption (O), let $k_0\in \R^n\setminus \mathcal{O}$, assume that $x\mapsto A(x)$ is a map of class $C^2$ taking values in the set of all positive definite matrices, and let $d$ be given by \eqref{d}. Then  $d\in SC^{\frac 12}_{loc}(\overline{ \R^n \setminus \mathcal{O}}\setminus  \{ k_0\})$.  
\end{theorem}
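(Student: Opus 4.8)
The plan is to reduce the boundary regularity to the interior regularity given by Theorem \ref{t:i}, by carefully analyzing the geometry of minimizing curves near $\partial\mathcal{O}$. First I would establish the basic structure of optimal trajectories: by a standard compactness argument (subunit curves form an equi-Lipschitz family with respect to the Riemannian metric), the infimum in \eqref{d} is attained, and any minimizer $\gamma$ consists of arcs lying in the open region $\R^n\setminus\mathcal{O}$, which are geodesics for the metric $A$, alternating with arcs running along $\partial\mathcal{O}$. The key qualitative fact I would prove is that, away from $k_0$, the distance function has a uniform comparison with the ambient Riemannian distance and that the ``free'' geodesic arcs leaving a boundary point do so tangentially (this is the classical reflection/unilateral-constraint condition). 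This is where the exponent $\tfrac12$ originates.

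The heart of the argument is a local-in-space estimate near an arbitrary point $\bar x\in\partial\mathcal{O}$ (the points of $\R^n\setminus\mathcal{O}$ bounded away from $\mathcal{O}$ are already covered by Theorem \ref{t:i}). Fix a compact neighborhood $K$ of $\bar x$ avoiding $k_0$. For $x,y\in K\cap\overline{\R^n\setminus\mathcal{O}}$ with $[x,y]$ contained in the admissible region, and $z=\lambda x+(1-\lambda)y$, I want to bound $\lambda d(x)+(1-\lambda)d(y)-d(z)$ by $C\lambda(1-\lambda)|x-y|^{3/2}$. The strategy is to construct a competitor curve from $z$: take an optimal curve from $x$ to $k_0$ and an optimal curve from $y$ to $k_0$, and splice together pieces of them via a short connecting curve near $z$ that respects the obstacle constraint. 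Because the obstacle is bounded by a $C^2$ hypersurface, a segment-like connection of Euclidean length $\ell$ that would cut into $\mathcal{O}$ can be replaced by an admissible detour along (or near) $\partial\mathcal{O}$ of length $\ell + O(\ell^2)$ — this is the standard ``$C^{1,1}$ boundary costs a quadratic correction'' estimate, which near a point where $x-y$ is roughly tangent to $\partial\mathcal{O}$ degrades the naive $|x-y|^2$ bound to $|x-y|^{3/2}$ after optimizing over the geometry. Quantifying this detour cost uniformly on $K$, using the $C^2$ regularity of $\partial\mathcal{O}$ and of $A$, is the main technical step.

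Concretely, I would split into two cases according to how close $z$ (equivalently $x,y$) is to $\partial\mathcal{O}$ relative to $|x-y|$. When $\dist(z,\partial\mathcal{O})\gtrsim |x-y|$, the segment $[x,y]$ and the splicing curves stay in the open region and the interior semiconcavity of Theorem \ref{t:i}, applied on a slightly smaller open set, gives the (better) linear bound $C\lambda(1-\lambda)|x-y|^2$. When $\dist(z,\partial\mathcal{O})\lesssim |x-y|$, I use the obstacle-detour construction: the connecting curve has Riemannian length at most (the straight Riemannian cost between the relevant points) $+\,C|x-y|^{3/2}$, and feeding this competitor into the definition of $d(z)$ yields the claimed fractional inequality. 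Combining the two cases and covering $\overline{\R^n\setminus\mathcal{O}}\setminus\{k_0\}$ by such neighborhoods together with Theorem \ref{t:i} for the purely interior part gives $d\in SC^{1/2}_{loc}$; local Lipschitz continuity of $d$ on $\overline{\R^n\setminus\mathcal{O}}\setminus\{k_0\}$ follows separately from the minimum-time interpretation (comparison with admissible curves along the boundary).

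I expect the main obstacle to be the uniform detour estimate near the boundary: one must show that replacing an inadmissible short connecting arc by an admissible one, hugging a $C^2$ hypersurface, costs only $O(|x-y|^{3/2})$ uniformly over all configurations of $x,y,z$ in the compact set — including the worst case where $x-y$ is nearly tangent to $\partial\mathcal{O}$ and $z$ lies essentially on $\partial\mathcal{O}$ — and to do so while keeping track of the (bounded, $C^2$) variation of the quadratic form $A$ along the detour. Handling the spliced competitor so that it is genuinely subunit (reparametrization by Riemannian arc length) and checking that the endpoint and target matching introduces no extra first-order cost are the remaining bookkeeping points.
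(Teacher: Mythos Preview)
Your approach is quite different from the paper's. The paper does not build competitors for $d$ directly at all: it introduces the energy
\[
E(x)=\inf_{\gamma}\int_0^1\langle A(\gamma)\dot\gamma,\dot\gamma\rangle\,dt
\]
over admissible curves $\gamma:[0,1]\to\overline{\R^n\setminus\mathcal O}$ with $\gamma(0)=x$, $\gamma(1)=k_0$, proves the elementary identity $d=\sqrt{E}$, quotes \cite[Proposition~3.9]{CCMW} for $E\in SC^{1/2}_{loc}(\overline{\R^n\setminus\mathcal O}\setminus\{k_0\})$, and then checks by a direct computation that the square root of a positive $SC^{1/2}$ function on a compact set is again $SC^{1/2}$. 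All the geometric work producing the exponent $\tfrac12$ is outsourced to \cite{CCMW}.

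As written, your proposal has a genuine gap: the competitor construction is in the wrong direction. Building an admissible curve \emph{from} $z$ yields an upper bound on $d(z)$, hence an inequality of the form $d(z)\le \lambda d(x)+(1-\lambda)d(y)+\text{error}$, which is semi\emph{convexity}. For semiconcavity you must do the opposite: fix an optimal $\gamma_z\in\Gamma^*[z]$ and build admissible competitors for $d(x)$ and $d(y)$ by connecting $x$ (resp.\ $y$) to $\gamma_z(s)$ and then following $\gamma_z$; this gives
\[
\lambda d(x)+(1-\lambda)d(y)-d(z)\ \le\ \lambda\,L\big(x,\gamma_z(s)\big)+(1-\lambda)\,L\big(y,\gamma_z(s)\big)-s,
\]
which is the quantity to be estimated.

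Even after fixing the direction, your account of where the exponent $\tfrac32$ comes from is not the right mechanism. A detour along a $C^2$ hypersurface replacing a chord of length $\ell$ costs $O(\ell^3)$ (arc versus chord), not $O(\ell^2)$, so boundary detours are \emph{not} the bottleneck. The loss to $|w-z|^{3/2}$ arises instead from the tangential departure of $\gamma_z$ from $\partial\mathcal O$: when $z\in\partial\mathcal O$ and $\gamma_z$ initially runs along the boundary, $\dot\gamma_z(0)$ is tangent to $\partial\mathcal O$ while $\ddot\gamma_z(0)$ is of unit size (the curvature). For a test point $w=z+h\,\nu(z)$ in the normal direction, an admissible connection from $w$ to $\gamma_z(s)$ is only available once $s$ is of order $\sqrt{h}=|w-z|^{1/2}$ (the tangent line from $w$ must reach the boundary), and after the $\sqrt{h}$-terms cancel one is left with an $O(h^{3/2})$ remainder. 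This is exactly the computation the paper carries out in the proof of Corollary~\ref{c:c} to show that $\tfrac12$ is sharp. Thus the relevant dichotomy is not ``$\dist(z,\partial\mathcal O)$ large versus small compared to $|x-y|$'' but rather the decomposition of $w-z$ into its normal and tangential components relative to $\dot\gamma_z(0)$, together with the forced choice $s\sim|w-z|^{1/2}$.
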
 
The conclusion of Theorem \ref{t:rb} can be obtained by reducing our problem to a minimum energy problem and then applying the semiconcavity result of \cite{CCMW} for Tonelli type Hamiltonians. We give the reasoning in the appendix of this paper. 
\begin{remark}\label{re:connectedness}
Observe that, since the above conclusion is of local nature, Theorem~\ref{t:rb} holds true without assuming $\mathcal O$ to consist of a single connected component. 
\end{remark}
One may wonder if the regularity result above is somehow optimal. For this reason, let us consider the special case where $A(x)\equiv I$ (the $n\times n$ identity matrix) and the obstacle is the  ball $\overline{B_1(0)}$. Let us  split $\overline{\R^n\setminus \mathcal{O}}$ into two sets 
\begin{equation}\label{ik0}
I(k_0)=\{ x\in \overline{\R^n\setminus \mathcal{O}}\ |\ d(x)=|x-k_0|\}
\end{equation}
and 
\begin{equation}\label{sk0}
S(k_0) =\{  x\in \overline{\R^n\setminus \mathcal{O}}\ |\ |x-k_0|<d(x)\}= \overline{\R^n\setminus \mathcal{O}}\setminus I(k_0). 
\end{equation}
%


 
\tikzset{
pattern size/.store in=\mcSize, 
pattern size = 5pt,
pattern thickness/.store in=\mcThickness, 
pattern thickness = 0.3pt,
pattern radius/.store in=\mcRadius, 
pattern radius = 1pt}
\makeatletter
\pgfutil@ifundefined{pgf@pattern@name@_d8o746zxz}{
\pgfdeclarepatternformonly[\mcThickness,\mcSize]{_d8o746zxz}
{\pgfqpoint{0pt}{0pt}}
{\pgfpoint{\mcSize+\mcThickness}{\mcSize+\mcThickness}}
{\pgfpoint{\mcSize}{\mcSize}}
{
\pgfsetcolor{\tikz@pattern@color}
\pgfsetlinewidth{\mcThickness}
\pgfpathmoveto{\pgfqpoint{0pt}{0pt}}
\pgfpathlineto{\pgfpoint{\mcSize+\mcThickness}{\mcSize+\mcThickness}}
\pgfusepath{stroke}
}}
\makeatother

 
\tikzset{
pattern size/.store in=\mcSize, 
pattern size = 5pt,
pattern thickness/.store in=\mcThickness, 
pattern thickness = 0.3pt,
pattern radius/.store in=\mcRadius, 
pattern radius = 1pt}
\makeatletter
\pgfutil@ifundefined{pgf@pattern@name@_fi5ccg1f8}{
\pgfdeclarepatternformonly[\mcThickness,\mcSize]{_fi5ccg1f8}
{\pgfqpoint{0pt}{0pt}}
{\pgfpoint{\mcSize+\mcThickness}{\mcSize+\mcThickness}}
{\pgfpoint{\mcSize}{\mcSize}}
{
\pgfsetcolor{\tikz@pattern@color}
\pgfsetlinewidth{\mcThickness}
\pgfpathmoveto{\pgfqpoint{0pt}{0pt}}
\pgfpathlineto{\pgfpoint{\mcSize+\mcThickness}{\mcSize+\mcThickness}}
\pgfusepath{stroke}
}}
\makeatother
\tikzset{every picture/.style={line width=0.75pt}} 

\begin{tikzpicture}[x=0.75pt,y=0.75pt,yscale=-0.7,xscale=0.7]

\draw  [dash pattern={on 0.84pt off 2.51pt}]  (323,1) -- (196,271) ;
\draw [fill={rgb, 255:red, 78; green, 141; blue, 215 }  ,fill opacity=1 ] [dash pattern={on 0.84pt off 2.51pt}]  (699,126.5) -- (196,271) ;
\draw  [draw opacity=0][fill={rgb, 255:red, 184; green, 233; blue, 134 }  ,fill opacity=0.39 ] (701.5,-0.25) -- (699,126.5) -- (375,219) -- (262.87,128.37) -- (323,1) -- cycle ;
\draw  [draw opacity=0][pattern=_d8o746zxz,pattern size=45pt,pattern thickness=0.75pt,pattern radius=0pt, pattern color={rgb, 255:red, 0; green, 0; blue, 0}] (0,1.5) -- (85.91,1.37) -- (323,1) -- (196,271) -- (279,309.5) -- (1,308.5) -- cycle ;
\draw  [draw opacity=0][pattern=_fi5ccg1f8,pattern size=45pt,pattern thickness=0.75pt,pattern radius=0pt, pattern color={rgb, 255:red, 0; green, 0; blue, 0}] (262.87,128.37) -- (375,219) -- (699,126.5) -- (698,308.5) -- (267,309.5) -- (196,271) -- cycle ;
\draw  [fill={rgb, 255:red, 74; green, 144; blue, 226 }  ,fill opacity=1 ] (262.87,128.37) .. controls (276.49,102.87) and (317.63,98.27) .. (354.76,118.1) .. controls (391.9,137.93) and (410.96,174.67) .. (397.35,200.17) .. controls (383.73,225.67) and (342.59,230.27) .. (305.45,210.45) .. controls (268.32,190.62) and (249.25,153.87) .. (262.87,128.37) -- cycle ;

\draw (423.67,90.67) node [anchor=north west][inner sep=0.75pt]    {$S( k_{0})$};
\draw (173,256.33) node [anchor=north west][inner sep=0.75pt]    {$k_{0}$};

\draw (135,183.33) node [anchor=north west][inner sep=0.75pt]    {$I( k_{0})$};
\draw (337,162) node [anchor=north west][inner sep=0.75pt]    {$\mathcal{O}$};
\draw (190,265) node [anchor=north west][inner sep=0.75pt]  [font=\Huge]  {${\displaystyle .}$};

\end{tikzpicture}
We observe that  $I(k_0)\not=\emptyset$ is a closed set   and   $S(k_0)\not=\emptyset$ is a relatively open subset of $\overline{\R^n\setminus\mathcal{O}}$. Furthermore, we have that 
\begin{equation}\label{eq:dini}
d(x)=|x-k_0|,\qquad \text{ for every }x\in I(k_0).
\end{equation}
We begin with observing that, in general, linear semiconcavity does not hold up to the boundary. More precisely, we have the following 
\begin{proposition}\label{cm}
Let $\mathcal{O}=\overline{B_1(0)}$, let $k_0\in \R^n\setminus \mathcal{O}$, let $A(\cdot )\equiv I$, and let $d$ be given by \eqref{d}. 
Then, 

\begin{itemize}
\item[$(i)$] $d\notin SC^1(\overline{ B_\delta (x)\setminus \mathcal{O}})$, for every $x\in S(k_0)\cap \partial\mathcal{O}$ and $\delta >0$.    
\item[$(ii)$] $d\in SC^\alpha_{loc}(\overline{\R^n\setminus \mathcal{O}}\setminus \{ k_0 \})\implies \alpha \le \frac 12$. 
\end{itemize}
 
 \end{proposition} 

Our second goal is the study of the singularities (i.e. points of nondifferentiability) of $d$. In order to avoid  mixing up different effects (e.g. the presence of conjugate points), we limit our analysis to  constant coefficients  ($A(x)\equiv I$). In this setting,  singularities can only be generated  by the presence of the obstacle, indeed $d$ is smooth if $\mathcal{O}=\emptyset$. 

Recalling the definition of reachable gradients \eqref{eq:dstar}, we introduce the singular set of $d$  as follows
\begin{multline}\label{insiemes} 
\Sigma (d)=\{ x\in \overline{\R^n\setminus \mathcal{O}}\setminus\{ k_0\} \ :
\\ 
D^*d(x) \text{ has at least two elements}\}. 
\end{multline}
 It is well-known that $x\in \Sigma (d)\setminus \mathcal{O}$ if and only if $d$ fails to be differentiable at $x$ (see e.g. \cite{CS}).  

We begin to study the singular set by analyzing the "propagation" of singularities. 

 For this purpose, we need some preliminaries on  generalized gradient flows. Given a real valued function $u$ defined on an open set $\Omega$, for $x\in \Omega$ we define 
$$
D^+u(x)=\left \{ p\in \R^n \ | \ \limsup_{\Omega \ni y\to x} \frac{u(y)-u(x)-\langle p, y-x\rangle }{|y-x|}\le 0   \right\}.
$$
If $u$ is a semiconcave function with linear modulus, then we have that $D^+u(x)\not=\emptyset$, for every $x\in \Omega$. The generalized gradient flow of $u$ is given by 
\begin{equation}\label{eq:ggf}
\dot{x}(t)\in D^+u (x(t)),\qquad \text{ for a.e. }t\geq 0 .
\end{equation}
We observe that the existence of a solution of \eqref{eq:ggf}  is a classical result in the theory of
differential inclusions (see, e.g., \cite[p.98]{AuC}).

For every $x_0\in \Omega$, Equation \eqref{eq:ggf} admits a unique Lipschitz continuous solution $x(\cdot )$ satisfying $x(0)=x_0$ (uniqueness is a consequence of linear semiconcavity). We observe that the flow associated with \eqref{eq:ggf} may have stationary points. Let us also point out that a solution of \eqref{eq:ggf} is a priori defined only until the first time $t_*$ such that $x(t_*)\in \partial\Omega$. 

If we add the additional information that $u$ is a viscosity solution of the eikonal equation on $\Omega$, then it is well known that the singular set of $u$, $\Sigma (u)$, is invariant for the generalized gradient flow, i.e., 
$$x(0)\in \Sigma (u)\implies x(t)\in \Sigma (u)\,,\qquad\forall t\geq 0\,.$$
More precisely, either $x(\cdot )$ reaches $\partial \Omega$ in finite time or $x(t)\in\Sigma (u)$ for every $t\geq 0$ (provided that $x(0)\in \Sigma (u)$).  
For the proof of  the invariance of the singular set for short time see \cite{AC2}; invariance for all times is a consequence of the results in \cite{A5}.   

In the following, we assume that $d$ is singular at a point $x_0\notin \mathcal{O}$ and we obtain different conclusions depending on whether  $x_0\in \co \mathcal{O}$ or  $x_0\notin \co \mathcal{O}$.  
Hereafter, $\co \mathcal{O}$ stands for the convex hull of $\mathcal{O}$.
\begin{theorem}\label{t3}
Under Assumption (O), let $k_0\in \R^n\setminus \mathcal{O}$, assume that $A\equiv I$, and let $d$ given by \eqref{d}.

(i) Let  $x_0\in  \Sigma (d)\cap (\co \mathcal{O}\setminus \mathcal{O})$, with $x_0\not= k_0$. Then there exist   $\sigma>0$
and a Lipschitz map 
$$
[0,\sigma [\ni t\mapsto x(t)\in \Sigma (d) 
$$
such that $x(0)=x_0$ and  $x(t)\not=x_0$, for every $t\in ]0,\sigma [$.

(ii)  Let  $x_0\in \Sigma (d) \setminus \co \mathcal{O}$, with $x_0\not= k_0$, and let $x(\cdot )$ be the solution of the equation 
\begin{equation}\label{GF}
\begin{cases}
\dot{x}(t)\in D^+d(x(t)),\qquad t\geq 0,
\\
x(0)=x_0. 
\end{cases}
\end{equation} 
Then, we have that $x(t)\in \Sigma (d)$ for every $t\geq 0$, $t\mapsto d(x(t))$ is a strictly increasing function (i.e., $t\mapsto x(t)$ is an injective map), and 
$\{ x(t)\ | \ t\geq 0\}$ is an unbounded set. 
\end{theorem}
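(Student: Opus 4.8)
\textbf{Proof proposal for Theorem \ref{t3}.}

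The plan is to treat the two cases by different mechanisms, both exploiting the propagation results already recalled together with the special geometry imposed by the convex hull.

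\emph{Case (i): $x_0\in\Sigma(d)\cap(\co\mathcal O\setminus\mathcal O)$.} Here I would argue that $x_0$ cannot be an \emph{isolated} singularity, and then invoke a propagation-from-a-single-point result. First I would use the fact that, by Theorem \ref{t:rb}, $d$ is at least $SC^{1/2}_{loc}$ near $x_0$, so $D^*d(x_0)$ is a well-defined nonempty compact set with at least two elements. The key observation is that, since $x_0\in\co\mathcal O$ but $x_0\notin\mathcal O$, the point $x_0$ lies strictly inside $\R^n\setminus\mathcal O$, where $d$ is actually $SC^1_{loc}$ by Theorem \ref{t:i}; hence the classical theory of semiconcave functions applies, and in particular $D^+d(x_0)=\co D^*d(x_0)$ has more than one element. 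I would then show that $x_0\in\co\mathcal O$ forces $x_0$ to lie ``between'' two distinct minimizing geodesics, one of which must wrap around the obstacle: writing $p_1,p_2\in D^*d(x_0)$ as the velocities of two distinct minimizers reaching back to the obstacle (or to $k_0$), the segment $[x_0-\varepsilon p_1,\,x_0-\varepsilon p_2]$ of initial conditions produces a one-parameter family of candidate competitors, and the convex-hull condition is exactly what prevents the distance from being realized by a single smooth branch. To conclude the existence of the Lipschitz arc $t\mapsto x(t)\in\Sigma(d)$ with $x(t)\neq x_0$ I would apply the short-time propagation result of \cite{AC2}, in the sharp form that says: if $x_0$ is a singular point of a semiconcave viscosity solution of the eikonal equation and $0\notin\operatorname{int}D^+d(x_0)$ (equivalently, $x_0$ is not a point where the singular set is locally a single point), then a nonconstant Lipschitz singular arc emanates from $x_0$. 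The content of Case (i) is therefore reduced to verifying the hypothesis $0\notin\operatorname{int}D^+d(x_0)$, which I expect to follow from the eikonal equation $|p|=1$ for all $p\in D^*d(x_0)$ — the reachable gradients all lie on the unit sphere, so their convex hull cannot contain $0$ in its interior, and since $x_0\notin\mathcal O$ we genuinely have $D^+d(x_0)=\co D^*d(x_0)$.

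\emph{Case (ii): $x_0\in\Sigma(d)\setminus\co\mathcal O$.} Here I would run the generalized gradient flow \eqref{GF} and use three facts in succession. First, invariance: by the results recalled before the statement (\cite{A5}), a solution of \eqref{GF} starting in $\Sigma(d)$ either reaches $\partial(\R^n\setminus\mathcal O)=\partial\mathcal O$ in finite time or stays in $\Sigma(d)$ for all $t\geq0$; I would rule out the first alternative by showing the flow cannot reach $\partial\mathcal O$, because on $\R^n\setminus\mathcal O$ near a point of $\Sigma(d)\setminus\co\mathcal O$ the gradient directions all point ``away'' from the obstacle in a suitable sense (more precisely, any $p\in D^+d(x(t))$ has $\langle p,x(t)-z\rangle>0$ for a fixed witness point $z\in\co\mathcal O$, by the same convexity argument as in Theorem \ref{t:exsing}), so $t\mapsto\dist(x(t),\co\mathcal O)$ is nondecreasing and the trajectory never returns to $\mathcal O$. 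Second, strict monotonicity of $t\mapsto d(x(t))$: along the gradient flow of a viscosity solution of the eikonal equation one has, for a.e.\ $t$, $\frac{d}{dt}d(x(t))=\langle p(t),\dot x(t)\rangle$ with $\dot x(t)\in D^+d(x(t))$ and $|p|=1$ on $D^*d$; the standard computation (see the semiconcavity literature, e.g.\ \cite{CS}) gives $\frac{d}{dt}d(x(t))=|\dot x(t)|^2\geq 0$, and I would upgrade this to \emph{strict} increase by noting that at a singular point $0\notin D^+d(x(t))$ — again because $D^+d=\co D^*d$ there and $D^*d\subset\{|p|=1\}$ with at least two elements whose convex hull misses the origin — hence $\dot x(t)\neq 0$ for a.e.\ $t$ as long as $x(t)\in\Sigma(d)$; injectivity of $t\mapsto x(t)$ is then immediate. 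Third, unboundedness of the orbit: since $d(x(t))$ is strictly increasing and $x(t)$ stays in $\Sigma(d)$ for all $t$ with no finite blow-up (the flow is Lipschitz, hence globally defined once we know it never hits $\partial\mathcal O$), the orbit cannot be contained in any compact set — on a compact set $d$ is bounded, contradicting the fact that $\frac{d}{dt}d(x(t))=|\dot x(t)|^2$ is bounded below away from $0$ uniformly on compacta of $\Sigma(d)$ (by continuity of $D^*d$ and compactness, $|\dot x(t)|\geq c>0$ there), which would force $d(x(t))\to+\infty$.

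\emph{Main obstacle.} I expect the delicate point in Case (ii) to be the claim that the gradient flow never reaches $\partial\mathcal O$: the invariance results only say the flow stays singular \emph{until} it hits the boundary, and one must genuinely use that $x_0\notin\co\mathcal O$ — together with the global structure of minimizers (each minimizer from a point outside $\co\mathcal O$ either goes straight to $k_0$ or first touches $\partial\mathcal O$ tangentially) — to produce a monotone ``Lyapunov-type'' quantity such as the signed distance to a supporting hyperplane of $\co\mathcal O$ that is strictly increasing along the flow. In Case (i) the analogous subtlety is showing $x(t)\neq x_0$ for $t>0$, i.e.\ that $x_0$ is not a stationary point of the gradient flow; this again reduces to $0\notin D^+d(x_0)$, which is where the eikonal equation $|Dd|=1$ and the interior regularity $D^+d(x_0)=\co D^*d(x_0)$ do the essential work.
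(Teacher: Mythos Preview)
Your architecture for (ii) is close to the paper's, but the step you treat as routine is precisely the heart of the matter, and your justification for it is wrong.

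You claim that $0\notin \operatorname{int}D^+d(x_0)$ in (i), and $0\notin D^+d(x(t))$ in (ii), follow just from $D^*d\subset S^{n-1}$, so that ``their convex hull cannot contain $0$ in its interior'' or ``misses the origin''. That is false: the convex hull of two antipodal unit vectors contains $0$, and the convex hull of all of $S^{n-1}$ has $0$ in its interior. The paper's own remark right after the theorem exhibits obstacles $\mathcal O\subset\R^2$ and points $x_0\in\co\mathcal O\setminus\mathcal O$ with $0\in D^+d(x_0)$; there the gradient flow from $x_0$ is the constant arc, so your reduction of (i) to ``$x_0$ is not a stationary point of the flow'' genuinely fails.

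The paper supplies the missing geometry as follows. For (ii) it proves (Lemma~\ref{l:critical}) that $0\notin D^+d(x)$ whenever $x\notin\co\mathcal O$: by Lemma~\ref{l:vi} each $p_j\in D^*d(x)$ equals $(x-x_j)/|x-x_j|$ for some $x_j\in\partial\mathcal O$ (the first touching point of a minimizer), so $\sum\lambda_jp_j=0$ rewrites as $x=\sum t_jx_j\in\co\mathcal O$. For (i), since $0\in D^+d(x_0)$ is possible, the paper does \emph{not} use the gradient flow; it invokes Theorem~\ref{t:acprop}, whose hypothesis is $\partial D^+d(x_0)\setminus D^*d(x_0)\neq\emptyset$. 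For the eikonal equation this fails only when $D^*d(x_0)=S^{n-1}$, and that is ruled out by a separate argument: it would force $\nu(\pi(x_0))\in D^*d(x_0)$ (with $\pi(x_0)$ the nearest point of $\partial\mathcal O$), hence by Lemma~\ref{l:vi} a minimizer leaving $x_0$ along $-\nu(\pi(x_0))$ would hit $\mathcal O$ \emph{normally} at $\pi(x_0)$, contradicting the tangential touching of $C^{1,1}$ minimizers (Remark~\ref{r:boundaryreg}).

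The remainder of your plan for (ii) --- monotonicity of $\dist(x(t),\co\mathcal O)$ to keep the flow off $\mathcal O$, then global invariance of $\Sigma(d)$, then unboundedness via a uniform lower bound on $|p(t)|$ on compacta --- matches the paper. Just note that this last lower bound again relies on Lemma~\ref{l:critical}, not on the sphere argument you propose.
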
 
\begin{remark}
We observe that there are several papers dealing with the (local) propagation of singularities for solutions of Hamilton-Jacobi equations when $x_0$ is not a critical point for $d$ (i.e., $0\notin D^+d(x_0)$), see, e.g., \cite{AC2}, \cite{CY} and \cite{CC}. Since this assumption is not satisfied in general, Theorem \ref{t3}(i) cannot be deduced from these results. 
For instance, one can easily find $\mathcal{O}\subset \R^2$ such that there exists $x_0\in \co\mathcal{O}\setminus \mathcal{O}$ with $0\in D^+d(x_0)$. In this special case, the singular curve provided by \eqref{GF} reduces to the constant arc $x(t)\equiv x_0$, for every $t\geq 0$, while 
the singular curve given by Theorem \ref{t3}(i) is not constant.  
 
Instead, Theorem \ref{t3}(ii) reduces to already established propagation results 
provided that 
\begin{itemize} 
\item no critical points of $d$ are present in the complement of $\co \mathcal{O}$, 
\item the solution of equation \eqref{GF} does not intersect $\mathcal{O}$ for positive times. 
\end{itemize}
Finally, let us point out that the presence of unbounded components of the singular set follows, in the case of solutions of evolutive Hamilton-Jacobi equations, from the result in \cite{A5}. Furthermore, in the case of stationary equations, the unbounded components of the singular set of the Euclidean distance from a closed set are analyzed in \cite{CP}. 
\end{remark}

For convex obstacles, we have propagation of singularities at infinity even from boundary points.   
\begin{theorem}[Convex obstacle]\label{t2}
Under Assumption (O),  let $\mathcal{O}$ be a convex set, let $k_0\in \R^n\setminus \mathcal{O}$, assume that $A\equiv I$, and let $d$ given by \eqref{d}.
Let  $x_0\in  \partial\mathcal{O}\cap \Sigma (d)$\footnote{We observe that,  by Theorem \ref{t:exsing},  we have that $\partial\mathcal{O}\cap \Sigma (d)\not=\emptyset$.}  and let $x(\cdot )$ be the solution of the equation 
$$
\begin{cases}
\dot{x}(t)\in D^+d(x(t)),\qquad t\geq 0,
\\
x(0)=x_0. 
\end{cases}
$$
Then, we have that $x(t)\in \Sigma (d)$ for every $t\geq 0$, $t\mapsto x(t)$ is an injective map, and $\{ x(t)\ | \ t\geq 0\}$ is an unbounded set. 
\end{theorem}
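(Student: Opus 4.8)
The plan is to reduce the convex-obstacle case to Theorem \ref{t3}(ii) by showing that the generalized gradient flow starting at $x_0 \in \partial \mathcal{O} \cap \Sigma(d)$ instantaneously leaves the convex hull of the obstacle, i.e. $\co\mathcal{O} = \mathcal{O}$ here, so that $x(t) \notin \mathcal{O}$ for all $t > 0$ and $x_0$ is a singular point of the distance function restricted to $\overline{\R^n\setminus\mathcal O}$. The first step is to analyze $D^+d(x_0)$ at a boundary singular point. Since $d \in SC^{1/2}_{loc}(\overline{\R^n\setminus\mathcal O}\setminus\{k_0\})$ by Theorem \ref{t:rb}, the superdifferential $D^+d(x_0)$ is a nonempty compact convex set (one must check that fractional semiconcavity still gives a nonempty $D^+$ up to the boundary, or argue via $\co D^*d(x_0)$), and since $x_0 \in \Sigma(d)$ it is not a singleton. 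I would show that every $p \in D^*d(x_0)$ has $\langle p, \nu(x_0)\rangle > 0$, where $\nu(x_0)$ is the outward unit normal to $\partial\mathcal O$ at $x_0$: reachable gradients come from geodesics approaching $x_0$ from outside the convex obstacle, and because $d$ restricted to $\overline{\R^n\setminus\mathcal O}$ satisfies $|Dd| = 1$ in the classical sense at interior differentiability points, $p$ is (an approximation of) a geodesic direction pulling away from $\mathcal O$; convexity forces the geodesic to enter the half-space $\{\langle \cdot - x_0, \nu(x_0)\rangle > 0\}$. Consequently every element of $D^+d(x_0) = \co D^*d(x_0)$ has strictly positive inner product with $\nu(x_0)$, so the selection $\dot x(t) \in D^+d(x(t))$ used to build the flow satisfies $\langle \dot x(0^+), \nu(x_0)\rangle > 0$ and $x(t)$ moves off $\partial\mathcal O$ into the exterior for small $t > 0$.

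The second step is to transfer the invariance of the singular set. Once $x(t_1) \in \R^n\setminus\mathcal O$ for some small $t_1 > 0$ and $x(t_1) \in \Sigma(d)$ — the latter because the segment of flow on the boundary is still contained in $\Sigma(d)$ by the results quoted before Theorem \ref{t3} (applied to $d$ as a constrained viscosity solution near $\partial\mathcal O$), or more robustly because $x(t_1)$ is a limit of singular points and $\Sigma(d)$ is relatively closed in $\overline{\R^n\setminus\mathcal O}\setminus\{k_0\}$ — I would note that $x(t_1) \notin \co\mathcal O = \mathcal O$, so Theorem \ref{t3}(ii), applied with initial point $x(t_1)$, gives that the continuation of the flow stays in $\Sigma(d)$ for all future times, that $t \mapsto d(x(t))$ is strictly increasing on $[t_1,\infty)$, and that the orbit $\{x(t) : t \geq t_1\}$ is unbounded. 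Combining the two pieces (the short initial boundary arc and the exterior tail) yields the full statement: $x(t) \in \Sigma(d)$ for all $t \geq 0$, injectivity of $t \mapsto x(t)$ (strict monotonicity of $d$ along the curve on $[t_1,\infty)$ plus the fact that $x$ genuinely moves on $[0,t_1]$), and unboundedness of the orbit.

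The main obstacle I expect is the first step — showing that the flow cannot linger on, or re-enter, $\mathcal O$, equivalently that $0 \notin D^+d(x_0)$ and more precisely that $D^+d(x_0)$ points strictly outward. Two subtleties make this delicate: first, $d$ is only $SC^{1/2}$ up to the boundary (Proposition \ref{cm} shows linear semiconcavity genuinely fails there), so the clean superdifferential calculus available in the interior must be replaced by careful arguments with reachable gradients and one-sided directional derivatives; second, one must genuinely use convexity of $\mathcal O$ — for a non-convex obstacle the flow started at a boundary singular point can stall (as the remark after Theorem \ref{t3} indicates via critical points in $\co\mathcal O\setminus\mathcal O$), so the argument should isolate exactly where convexity is invoked, namely in forcing competitor geodesics from $x_0$ to avoid $\mathcal O$ by heading into the outward half-space, which in turn forces $0 \notin D^+d(x_0)$ and keeps the orbit in the region where Theorem \ref{t3}(ii) applies. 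A clean way to package this is a lemma: if $\mathcal O$ is convex and $x_0 \in \partial\mathcal O\cap\Sigma(d)$, then $\min_{p \in D^+d(x_0)} \langle p, \nu(x_0)\rangle > 0$; granting that lemma, the rest is bookkeeping with the already-established Theorem \ref{t3}(ii).
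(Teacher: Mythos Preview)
Your key lemma is false, and the error is in the direction of the reachable gradients. You claim that every $p\in D^*d(x_0)$ satisfies $\langle p,\nu(x_0)\rangle>0$. In fact the opposite sign holds: by Lemma~\ref{l:vi}, each $p\in D^*d(x_0)$ equals $-\dot\gamma(0)$ for some length minimizer $\gamma\in\Gamma^*[x_0]$, and since $\gamma$ runs in $\overline{\R^n\setminus\mathcal O}$ one has $\langle\dot\gamma(0),\nu(x_0)\rangle\ge 0$, hence $\langle p,\nu(x_0)\rangle\le 0$. For a convex obstacle and $x_0\in S(k_0)\cap\partial\mathcal O$, the paper shows (Lemma~\ref{l:np}) that equality holds: $\langle p,\nu(x_0)\rangle=0$ for every $p\in D^*d(x_0)$. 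Geometrically, since $x_0\in S(k_0)$ the minimizer must eventually run along $\partial\mathcal O$; by $C^{1,1}$ regularity it is therefore tangent to $\partial\mathcal O$ at $x_0$. Consequently $D^+d(x_0)=\co D^*d(x_0)$ lies in the hyperplane $\{\langle\cdot,\nu(x_0)\rangle=0\}$, and the minimal-norm element of $D^+d(x_0)$ (which governs $\dot x(0^+)$) is tangent to $\partial\mathcal O$, not transversal. So your mechanism for forcing the flow off the boundary does not work, and your proposed inequality $\min_{p\in D^+d(x_0)}\langle p,\nu(x_0)\rangle>0$ is exactly zero.

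There is a second gap: at $x_0\in\partial\mathcal O$ the function $d$ is only $SC^{1/2}$ (and this is sharp by Proposition~\ref{cm}), so the standard existence/uniqueness theory for the generalized gradient flow, which relies on linear semiconcavity, does not directly apply at $x_0$. The paper handles both issues differently. From the orthogonality $\langle D^*d(x_0),\nu(x_0)\rangle=0$ it follows that $-\nu(x_0)$ lies in the normal cone to $\co D^*d(x_0)$ at its minimal-norm element $p_0\notin D^*d(x_0)$; the abstract propagation result for $SC^\alpha$ functions (Theorem~\ref{t:abcprop}) then produces a singular arc $s\mapsto x(s)=x_0+s\nu(x_0)+o(s)$ that enters $\R^n\setminus\mathcal O$. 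One then applies Theorem~\ref{t3}(ii) to each $x(s_j)$ with $s_j\to 0$, obtains interior gradient-flow trajectories $x_j(\cdot)$, and passes to the limit using compactness of trajectories of the differential inclusion. The limit curve is the desired solution starting at $x_0$; its invariance in $\Sigma(d)$ is checked by a contradiction argument using \eqref{eq:rlt}. In short, the orthogonality (not strict positivity) of $D^*d(x_0)$ with $\nu(x_0)$ is precisely what allows the outward direction $\nu(x_0)$ to serve as a propagation direction in the sense of Theorem~\ref{t:abcprop}, and the flow at the boundary point is obtained as a limit rather than constructed directly.
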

 In the general case we have a local propagation result. 
\begin{theorem}\label{t2nc}
Under Assumption (O), let $k_0\in \R^n\setminus \mathcal{O}$, assume that $A\equiv I$, and let $d$ given by \eqref{d}.
Let  $x_0\in  \partial\mathcal{O}\cap \Sigma (d)$.
Then there exist $\sigma>0$
and a map 
$$
[0,\sigma [\ni t\mapsto x(t)\in \Sigma (d) 
$$
such that $x(0)=x_0$, $\lim_{t\to 0^+}x(t)=x_0$, and  $x(t)\in\R^n\setminus\mathcal O$ for all $t\in ]0,\sigma [$. 
\end{theorem}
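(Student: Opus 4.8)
The plan is to reduce the general (possibly nonconvex) obstacle to the convex situation already handled in Theorem~\ref{t2}, at the cost of losing global propagation and retaining only a local conclusion. The starting observation is that, since $x_0\in\partial\mathcal O\cap\Sigma(d)$, the obstacle forces the presence of a genuine singularity at $x_0$; near $x_0$ the distance function $d$ is in $SC^{\frac12}_{loc}(\overline{\R^n\setminus\mathcal O}\setminus\{k_0\})$ by Theorem~\ref{t:rb}, so $D^*d(x_0)$ is a well-defined compact set with at least two elements. First I would distinguish two cases according to the position of $x_0$ relative to $\co\mathcal O$: either $x_0\in\partial(\co\mathcal O)$, or $x_0$ lies in the \emph{interior} of $\co\mathcal O$ (it cannot be exterior, since $x_0\in\partial\mathcal O\subset\co\mathcal O$).

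If $x_0$ is in the interior of $\co\mathcal O$, then $x_0\in\co\mathcal O\setminus\mathcal O$ and we are exactly in the hypotheses of Theorem~\ref{t3}(i): that theorem directly produces $\sigma>0$ and a Lipschitz map $t\mapsto x(t)\in\Sigma(d)$ with $x(0)=x_0$ and $x(t)\neq x_0$ for $t\in\,]0,\sigma[$. One still has to check the extra requirement that $x(t)\in\R^n\setminus\mathcal O$ for $t\in\,]0,\sigma[$; this should follow because $x_0\in\partial\mathcal O$ while the singular arc emanating from $x_0$ moves into the region where $d>|x-k_0|$ (the set $S(k_0)$ in the model case, and its analogue here), hence away from the obstacle, after possibly shrinking $\sigma$. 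The remaining, and genuinely new, case is $x_0\in\partial(\co\mathcal O)\cap\partial\mathcal O$. Here I would exploit the \emph{local} convexity of $\co\mathcal O$ at such a point: near $x_0$ the boundary $\partial(\co\mathcal O)$ supports a hyperplane $H$ with $\co\mathcal O$ on one side, and on the other side of $H$ the geodesics realizing $d$ are straight segments that do not feel the obstacle at all. One then compares $d$ with the Euclidean distance, or with the distance to an auxiliary \emph{convex} body obtained by replacing $\mathcal O$ near $x_0$ by a supporting half-space (or a large ball tangent to $H$ from the obstacle side), and transfers the singularity of $d$ at $x_0$ to a singularity of this auxiliary convex-obstacle distance, to which Theorem~\ref{t2} (or at least its local part) applies, yielding a singular arc; one finally shows this arc is also singular for the original $d$ as long as it stays in the half-space away from $\mathcal O$, which holds for $t$ small.

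The mechanism behind both cases, and the technical heart of the argument, is the following: since $x_0\in\partial(\co\mathcal O)$, there is a supporting hyperplane, hence an ``exposed direction'' $\nu$ (outer normal to $\co\mathcal O$ at $x_0$) such that for small $s>0$ the point $x_0+s\nu$ lies outside $\co\mathcal O$; along this direction any optimal trajectory from $x_0+s\nu$ to $k_0$ that passes near $x_0$ must bend around $\partial\mathcal O$, and the two-sidedness of this bending (or the two distinct shortest ways to round the obstacle, guaranteed by $\#D^*d(x_0)\ge2$) produces two distinct reachable gradients along a whole small arc transversal to $\nu$. Concretely, using that $p\mapsto\langle p,\nu\rangle$ separates the elements of the convex hull $D^+d(x_0)$, one selects the superdifferential element of minimal ``height'' in direction $\nu$, runs a short-time continuity/compactness argument on $D^*d$ (upper semicontinuity of the reachable-gradient map, Theorem~\ref{t:i} and Theorem~\ref{t:rb}) to show that nearby points still carry at least two reachable gradients, and assembles these into the desired Lipschitz singular arc $x(\cdot)$ leaving $x_0$. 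I expect the main obstacle to be precisely this last point: ruling out that the singularity at $x_0$ is ``isolated from the outside'', i.e. guaranteeing that the two branches of $D^*d$ persist on a nondegenerate arc emanating into $\R^n\setminus\mathcal O$ rather than collapsing instantly — this is where the boundary regularity $SC^{\frac12}$ of Theorem~\ref{t:rb} (rather than mere Lipschitz continuity) and the geometry of the supporting hyperplane at $x_0\in\partial(\co\mathcal O)$ must be used in tandem, together with the invariance of $\Sigma(d)$ under the generalized gradient flow recalled above to propagate the singularity for the short time $\sigma$.
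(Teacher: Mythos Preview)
Your proposal has genuine gaps; the overall strategy does not match the paper's and would not close.

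First, your case split is incorrect. Since $\mathcal O$ is closed, $\partial\mathcal O\subset\mathcal O$, so $x_0\in\mathcal O$ always. In particular $x_0\notin\co\mathcal O\setminus\mathcal O$, and Theorem~\ref{t3}(i) is never applicable at a boundary point of the obstacle. Your ``interior of $\co\mathcal O$'' case therefore collapses. (In fact Theorem~\ref{t3}(i) relies on the \emph{linear} semiconcavity of $d$ on a full neighbourhood of $x_0$, which is unavailable here precisely because $x_0\in\partial\mathcal O$; cf.\ Proposition~\ref{cm}.)

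Second, the reduction to a convex auxiliary obstacle is not workable as stated. The distance $d$ is a global object: length minimizers from points near $x_0$ may wrap around $\mathcal O$ along portions of $\partial\mathcal O$ far from $x_0$, and replacing $\mathcal O$ near $x_0$ by a half-space or a tangent ball changes the set $D^*d(x_0)$ in an uncontrolled way. More concretely, the key structural fact in the convex case is Lemma~\ref{l:np}: every $p\in D^*d(x_0)$ is orthogonal to $\nu(x_0)$. This fails for a general obstacle (one may well have $\langle p,\nu(x_0)\rangle<0$), and it is exactly this failure that your ``exposed direction'' argument glosses over. The abstract propagation result (Theorem~\ref{t:abcprop}) requires knowing a vector in the normal cone to $\co D^*d(x_0)$ that points into $\R^n\setminus\mathcal O$; without orthogonality, $\nu(x_0)$ need not be such a vector.

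The paper handles this by splitting on whether $\langle p,\nu(x_0)\rangle=0$ for all $p\in D^*d(x_0)$ (then Lemma~\ref{l:np}'s conclusion holds and one argues as in Theorem~\ref{t2}) or not. In the latter case one invokes a different propagation tool, Theorem~\ref{t:fleming}, which only needs a point $p_0\in\co D^*d(x_0)\setminus D^*d(x_0)$ with $p_0+t\nu(x_0)\notin\co D^*d(x_0)$ for all $t>0$. The heart of the proof is verifying this condition by a contradiction argument that, via Sard's Lemma applied to the map $x\mapsto (x_0-x)/|x_0-x|$ from a piece of $\partial\mathcal O$ to the sphere, rules out a one-parameter family of reachable gradients with continuously varying normal component. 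This differential-topology step is the essential missing idea in your proposal.
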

\begin{remark} 
As a consequence of the results above, for every singular point $x_0\not= k_0$ of the distance function, in the presence of an obstacle, there exists a continuum of singular points through $x_0$.  
\end{remark}
In the next results we study the existence of  singularities for the distance function for convex obstacles.
 
\begin{theorem}\label{t:exsing}
Under Assumption (O), let $\mathcal{O}$ be a convex set, let $k_0\in \R^n\setminus \mathcal{O}$, assume that $A\equiv I$, and let $d$ be given by \eqref{d}. Then,  
$$\partial \mathcal{O}\cap \Sigma (d)\not=\emptyset.$$
\end{theorem}  
The convexity assumption in Theorem \ref{t:exsing} is of technical nature. Understanding the case of a general compact obstacle is an open problem.

\section{Proofs} 
 
\subsection{Preliminaries on semiconcave functions} 

We recall the result about the extension of a semiconcave function with fractional modulus, obtained in \cite{ABC}. Such a result will be used several times in the sequel. 
 \begin{theorem}\label{t:estensione}
Let $\Omega\subset \R^n$ be an open set with boundary of class $C^{1,1}$, and let $u\in SC_{loc}^\alpha (\overline{\Omega})$. Then, for every $x\in \partial\Omega$ and there exist $\delta>0$ and a function $E(u)\in SC^\alpha (B_\delta (x))$ such that 
\begin{enumerate} 
\item $E(u)(y)=u(y)$ for every $y\in B_\delta (x)\cap \overline{\Omega}$;
\item $D^*E(u)(y)=D^*u(y)$ for every $y\in B_\delta (x)\cap \partial\Omega$.
\end{enumerate}
\end{theorem}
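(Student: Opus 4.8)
The plan is to reduce the statement to the case of a flat boundary and then to build $E(u)$ as the infimum of a family of ``tangent paraboloids'' of order $1+\alpha$ attached to $u$ and extended to all of $\R^n$.

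\emph{Step 1 (flattening).} Since $\partial\Omega$ is of class $C^{1,1}$, near the given $x$ we may (after a rotation) write $\Omega=\{(y',y_n):y_n>g(y')\}$ with $g\in C^{1,1}$, $g(x')=0$, $\nabla g(x')=0$. The map $\Phi(y)=(y',y_n-g(y'))$ is a bi-Lipschitz $C^{1,1}$ diffeomorphism of a neighbourhood of $x$ onto a neighbourhood of the origin, taking $\overline\Omega$ onto $\{y_n\ge 0\}$, with $D\Phi(x)=\mathrm{Id}$. I would check that $u\circ\Phi^{-1}\in SC^\alpha_{loc}$ near the origin: this uses only that $\Phi^{-1}\in C^{1,1}$ (so the second-order remainder in its Taylor expansion is $O(|\cdot|^2)$) together with $1+\alpha\le 2$, which lets one absorb that remainder into the $|\cdot|^{1+\alpha}$ term by means of the local Lipschitz bound for $u$. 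Similarly, at differentiability points $D(u\circ\Phi^{-1})(z)=[D\Phi^{-1}(z)]^{\top}\,Du(\Phi^{-1}(z))$, and since $D\Phi^{-1}$ is continuous and invertible this gives $D^*(u\circ\Phi^{-1})(z)=[D\Phi^{-1}(z)]^{\top}\,D^*u(\Phi^{-1}(z))$. Hence it is enough to treat $\Omega=\{y_n>0\}$, $x=0$, $u\in SC^\alpha_{loc}(\overline{B_r^+})$ where $B_r^+:=B_r\cap\{y_n>0\}$; the conclusion for the original data follows by carrying $E(u)$ back through $\Phi$.

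\emph{Step 2 (the extension).} For $z\in B_r^+$ pick $p_z\in D^+u(z)$, choosing $p_z=Du(z)$ at the (full-measure) set of differentiability points, and set
\[
\phi_z(y)=u(z)+\langle p_z,\,y-z\rangle+C\,|y-z|^{1+\alpha},\qquad y\in\R^n,
\]
where $C$ is a semiconcavity constant of $u$ on $\overline{B_{r/2}^+}$ (we are free to enlarge $C$). Two facts. First, since $u$ is fractionally semiconcave on the convex set $\overline{B_{r/2}^+}$, every superdifferential produces a tangent paraboloid, so $u(y)\le\phi_z(y)$ for all $y,z\in\overline{B_{r/2}^+}$, while $\phi_z(z)=u(z)$; hence $u=\inf_{z\in B_{r/2}^+}\phi_z$ on $\overline{B_{r/2}^+}$ (for boundary points of the half-ball one lets $z$ tend to $y$ along the normal direction). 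Second, each $\phi_z$ is $C^1$ on $\R^n$ (as $\alpha>0$) and lies in $SC^\alpha(\R^n)$ with a semiconcavity constant depending only on $C,\alpha,n$: this follows from the elementary bound $\big|\,|b|^{1+\alpha}-|a|^{1+\alpha}-(1+\alpha)|a|^{\alpha-1}\langle a,b-a\rangle\,\big|\le c_\alpha|b-a|^{1+\alpha}$, valid for $1+\alpha\in(1,2]$, which makes $y\mapsto|y-z|^{1+\alpha}$ uniformly $SC^\alpha$. Now put, for a small $\delta\in(0,r/2)$,
\[
E(u)(y):=\inf_{z\in B_{r/2}^+}\phi_z(y),\qquad y\in B_\delta.
\]
On $B_\delta$ the $\phi_z$ are uniformly bounded and uniformly Lipschitz (because $|p_z|\le\mathrm{Lip}(u)$ and $|y-z|\le r$), so $E(u)$ is finite and locally Lipschitz on $B_\delta$; being an infimum of functions with a common semiconcavity constant, $E(u)\in SC^\alpha(B_\delta)$. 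By the first fact, $E(u)=u$ on $\overline{B_\delta^+}$, which is property (1).

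\emph{Step 3 (reachable gradients).} The inclusion $D^*u(\bar y)\subseteq D^*E(u)(\bar y)$ for $\bar y\in B_\delta\cap\{y_n=0\}$ is immediate: $E(u)=u$ on $\overline{B_\delta^+}$ and points of $\{y_n>0\}$ tending to $\bar y$ are interior to $B_\delta$. For the reverse inclusion, let $p=\lim_k DE(u)(y_k)$ with $y_k\to\bar y$ and $E(u)$ differentiable at $y_k$; the indices with $(y_k)_n>0$ give $Du(y_k)$ at interior points of $\{y_n>0\}$, hence limits in $D^*u(\bar y)$, and since $\{y_n=0\}$ is negligible we may assume $(y_k)_n<0$. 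For such $y_k$ the infimum defining $E(u)(y_k)$ is attained (along a subsequence, passing to the limit in the tangent-paraboloid inequality) at some $z_k\in\overline{B_{r/2}^+}$, with $\phi_{z_k}-E(u)\ge 0$ and equality at $y_k$; as $\phi_{z_k}\in C^1$,
\[
DE(u)(y_k)=D\phi_{z_k}(y_k)=p_{z_k}+C(1+\alpha)|y_k-z_k|^{\alpha-1}(y_k-z_k).
\]
Passing to a further subsequence, $z_k\to z^*\in\overline{B_{r/2}^+}$, $p_{z_k}\to q^*$ with $u(w)\le u(z^*)+\langle q^*,w-z^*\rangle+C|w-z^*|^{1+\alpha}$ for $w\in\overline{B_{r/2}^+}$, and, since $E(u)(y_k)\to u(\bar y)$, the limiting paraboloid $\phi_{z^*}$ touches $u$ from above at $\bar y$. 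The point is now to show $p=D\phi_{z^*}(\bar y)\in D^*u(\bar y)$ — touching from above gives only that it is a superdifferential of $u$ at $\bar y$, which is weaker. The key claim is $z^*=\bar y$: then the order-$(1+\alpha)$ correction term vanishes in the limit and $p=q^*=\lim_k Du(z_k)\in D^*u(\bar y)$, the $z_k$ being chosen among differentiability points of $u$. To prove $z^*=\bar y$ I would compare $\phi_{z_k}(y_k)=E(u)(y_k)$ with the value at $y_k$ of the paraboloid $\phi_{w_k}$ based at $w_k:=$ the reflection of $y_k$ across $\{y_n=0\}$ (which lies in $B_{r/2}^+$ for $k$ large), and use the local Lipschitz bound for $u$ to force $z_k$ into a neighbourhood of $\bar y$ that shrinks as $C$ grows; a bootstrap on this estimate, exploiting the flatness of $\{y_n=0\}$, should yield $z_k\to\bar y$. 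Carrying $E(u)$ back through $\Phi$ then delivers (1) and (2) for the original $\Omega$.

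\emph{Main obstacle.} The delicate part is the last step of Step~3: ruling out the creation, by the exterior extension, of reachable gradients at a boundary point $\bar y$ other than those already present in $D^*u(\bar y)$ — equivalently, showing that the base points of the paraboloids active just outside $\overline\Omega$ converge to $\bar y$ and that their order-$(1+\alpha)$ corrections vanish there. This is exactly where the $C^{1,1}$-regularity of $\partial\Omega$ (used both to flatten the boundary and to keep the change of variables compatible with $SC^\alpha$ and with reachable gradients) and a judicious choice of the paraboloid data $p_z$ are essential. The case $\alpha=1$ is classical and formally identical, with the $\phi_z$ now ordinary second-order paraboloids.
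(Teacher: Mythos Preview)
The paper does not prove this theorem: it is merely \emph{recalled} from the companion preprint \cite{ABC} and used as a black box throughout. So there is no ``paper's proof'' to compare yours to; what follows is an assessment of your argument on its own merits.

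Your Steps~1 and~2 are essentially sound. The flattening under a $C^{1,1}$ change of variables preserves $SC^\alpha$ exactly because $1+\alpha\le 2$, and the $\phi_z$ are uniformly $SC^\alpha$ since $|\cdot|^{1+\alpha}\in C^{1,\alpha}$; the infimum of a family with a common semiconcavity constant inherits it. The identification $E(u)=u$ on $\overline{B_\delta^+}$ is also fine.

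Step~3, however, has a real gap, and you are right to flag it. Two separate issues:

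\emph{(a) Forcing $z^*=\bar y$.} Your reflection comparison with $w_k$ gives, after a mean-value estimate,
\[
C(1+\alpha)\,|y_k-z_k|^{\alpha-1}(z_k)_n \;\le\; 2L + C\,2^\alpha |(y_k)_n|^\alpha,
\]
which in the limit only yields $(z^*)_n\le \mathrm{const}/C$; it does not force $z^*=\bar y$, and no ``bootstrap'' will close this. The clean way is different: you already allowed yourself to take the paraboloid constant $C$ \emph{strictly larger} than the semiconcavity constant $C_0$ of $u$. Passing to the limit in $\phi_{z_k}(y_k)=E(u)(y_k)$ gives $\phi_{z^*}(\bar y)=u(\bar y)$; combining this with the genuine semiconcavity inequality $u(\bar y)\le u(z^*)+\langle q^*,\bar y-z^*\rangle+C_0|\bar y-z^*|^{1+\alpha}$ (valid on the convex half-ball) yields $(C-C_0)|\bar y-z^*|^{1+\alpha}\le 0$, hence $z^*=\bar y$ in one stroke. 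This replaces your bootstrap entirely.

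\emph{(b) Landing in $D^*u(\bar y)$ rather than $D^+u(\bar y)$.} Even granting $z^*=\bar y$, you conclude $p=q^*=\lim p_{z_k}$, and then assert $q^*\in D^*u(\bar y)$ ``the $z_k$ being chosen among differentiability points''. But $z_k$ was \emph{defined} as a point realising the infimum, not as a differentiability point, so this is a non sequitur as written; upper semicontinuity of $D^+u$ only gives $q^*\in D^+u(\bar y)$. The fix is to take the infimum in the definition of $E(u)$ over differentiability points of $u$ from the outset (a set of full measure, so (1) is unaffected). The infimum is then attained along a minimising sequence of differentiability points whose limit $z_k^\infty$ carries $p_{z_k^\infty}\in D^*u(z_k^\infty)$; a diagonal argument along $z_k^\infty\to\bar y$ then places $q^*$ in $D^*u(\bar y)$.

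With these two repairs your inf-of-paraboloids construction goes through. As written, though, Step~3 is a sketch with an unjustified key claim, not a proof.
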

In particular, for the applications of interest to this paper, we will take $u=d$, $\alpha =1/2$ and $\Omega=\R^n\setminus\{ \mathcal{O}\cup \overline{B}_r(k_0)\}$, for a suitable positive $r$ less than the distance of $k_0$ from $\partial\mathcal{O}$.
\begin{remark} 
We observe that $x\in \Sigma (d)\cap \partial\mathcal{O}$ if and only if each local semiconcave extension of $d$ is not differentiable at $x$. 
 Indeed, if $D^*d(x)$ contains $p_0,p_1$, with $p_0\not= p_1$, then for every local extension of $d$, $\tilde{d}$, we have that $p_0,p_1\in D^*\tilde{d}(x)$, by the definition of   reachable gradients, i.e., $x\in \Sigma (\tilde{d})$.  Vice versa, if $x\in \partial\mathcal{O}\setminus \Sigma (d)$, then  Theorem~\ref{t:estensione} provides a local extension of $d$ which is differentiable at $x$.  
\end{remark} 
Let us also recall a result on the ``propagation'' of singularities for semiconcave functions with linear modulus (see \cite[Theorem 4.2]{AC1}). 
\begin{theorem}\label{t:acprop}
Let $\Omega\subset \R^n$ be an open set and let $u\in SC^1(\Omega)$. Let $x_0\in\Omega$ be such that 
$$
\partial D^+u(x_0)\setminus D^*u(x_0)\not=\emptyset . 
$$
Then, there exists a Lipschitz arc, 
$$
[0,\sigma [\ni t\mapsto x(t)\in \Sigma (u),
$$
 such that $x(0)=x_0$  and $x(t)\not= x_0$, for every $t\in ]0,\sigma [$. 
 \end{theorem}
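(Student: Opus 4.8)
\textbf{Proof plan for Theorem \ref{t:acprop}.} Since this is a known result, stated precisely as \cite[Theorem 4.2]{AC1} (see also \cite{CS}), my plan is essentially to invoke that reference; for completeness let me recall the mechanism behind it, as the same circle of ideas underlies the propagation statements used in the sequel. The singular arc is manufactured as a one–parameter family of maximizers of a parametric strongly concave problem built from a supporting hyperplane of the convex body $D^+u(x_0)$ at the chosen non-reachable boundary gradient $p_0$, and the arc leaves $x_0$ transversally to that hyperplane.

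First I would fix a ball $B_\rho(x_0)\subset\Omega$ and a constant $C$ large enough that $g(y):=u(y)-\frac C2|y-x_0|^2$ is strongly concave on $B_\rho(x_0)$, and I would normalize by replacing $u(y)$ with $u(y)-\langle p_0,y-x_0\rangle$ — this affects neither $\Sigma(u)$ nor the conclusion — so that $p_0=0\in\partial D^+u(x_0)$; then $g(y)\le g(x_0)$ on $B_\rho(x_0)$, i.e. $x_0$ is the strict maximum point of $g$. Since $0\in\partial D^+u(x_0)$ there is a unit vector $\theta$ with $\langle p,\theta\rangle\le0$ for all $p\in D^+u(x_0)$. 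For small $t\ge0$ I would define $x(t)$ to be the unique maximizer of $g$ over the affine slice $\{\,y\in\overline{B_\rho(x_0)}:\langle y-x_0,\theta\rangle=-t\,\}$. Strong concavity ensures that $x(t)$ is well defined with $x(0)=x_0$ and $x(t)\to x_0$ as $t\to0^+$, and a closer analysis — which is the technical core of \cite{AC1} — shows that $t\mapsto x(t)$ is in fact locally Lipschitz. For $t$ small $x(t)$ is interior to the slice, so the constrained optimality condition gives a Lagrange multiplier $\mu_t\in\R$ with $\mu_t\theta\in\partial g(x(t))$, hence $q_t:=\mu_t\theta+C(x(t)-x_0)\in D^+u(x(t))$. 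Evaluating the supergradient inequality for $\mu_t\theta$ at the point $x_0$ and using $g(x(t))\le g(x_0)$ forces $\mu_t\ge0$ for $t>0$; on the other hand $q_t$ stays bounded, $C(x(t)-x_0)\to0$, and by upper semicontinuity of $x\mapsto D^+u(x)$ every subsequential limit $q_*$ of $q_t$ lies in $D^+u(x_0)\cap\R\theta$ with $\langle q_*,\theta\rangle\le0$, so $\mu_*\le0$; combining, $\mu_t\to0$ and $q_t\to p_0$.

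With this in hand I would conclude as follows: $x(t)\ne x_0$ for $t\in\,]0,\sigma[$ because $\langle x(t)-x_0,\theta\rangle=-t\ne0$; and each such $x(t)$ is singular, because if $x(t_n)$ were a differentiability point of $u$ for some sequence $t_n\to0^+$, then $Du(x(t_n))=q_{t_n}\to p_0$, which would put $p_0$ into $D^*u(x_0)$, contrary to the choice of $p_0$. Thus $[0,\sigma[\ni t\mapsto x(t)\in\Sigma(u)$ is the desired Lipschitz arc. The main obstacle in this scheme is the well-posedness, stability and Lipschitz dependence on $t$ of the constrained maximizer $x(t)$ — this is exactly where linear semiconcavity (equivalently, concavity of $g$) is indispensable, both for uniqueness of $x(t)$ and for the quantitative estimates — together with the sign analysis of the multiplier forcing $q_t\to p_0$ rather than to some other point of $D^+u(x_0)$, which is the only step where the hypothesis $p_0\in\partial D^+u(x_0)$ is genuinely used; the remainder is routine bookkeeping with the upper semicontinuity of $x\mapsto D^+u(x)$ and with the definition of reachable gradients.
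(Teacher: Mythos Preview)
The paper does not prove Theorem~\ref{t:acprop}; it simply recalls it from \cite[Theorem~4.2]{AC1}, exactly as you do. Your sketch of the mechanism is correct in spirit, though the construction actually used in \cite{AC1} (and reproduced in this paper for the fractional analogue, Theorem~\ref{t:acfleming}) maximizes the linearly perturbed function $y\mapsto g(y)-\langle p_0+tq,y\rangle$ freely over a ball rather than maximizing $g$ over an affine slice; both variants lead to the same contradiction $p_0\in D^*u(x_0)$ along a sequence of hypothetical differentiability points.
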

Finally, we will need also the following propagation result (see \cite[Theorem 1.5]{ABC}). 
\begin{theorem}\label{t:abcprop}
Let  $\Omega$ be an open set with boundary of class $C^{1,1}$, let  $u\in SC^\alpha_{loc} (\overline{\Omega})$ and let $x_0\in \partial \Omega$ such that 
\begin{equation}\label{eq:h}
\partial \co D^*u(x_0)\setminus D^*u(x_0)\not=\emptyset .
\end{equation}
Let $p_0\in \co D^*u(x_0)\setminus D^*u(x_0)$ and let $-\theta$ be a vector in the normal cone to $\co D^*u(x_0)$ at $p_0$. 
Let $E(u)\in SC^\alpha (B_\delta (x_0))$ be an extension of $u$ satisfying property $(2)$ of Theorem~\ref{t:estensione}.   
Then, there is a map 
$$
[0,\sigma ]\ni s\mapsto x(s)\in B_\delta (x_0)
$$
(depending on $E(u)$)  such that 
\begin{enumerate}
\item $x(0)=x_0$ and $\lim_{s\to 0^+} x(s)=x_0$,  
\item   $x(s)\not=x_0$, for every $s\in [0,\sigma ]$;
\item $x(s)\in \Sigma (E(u))$, for every $s\in [0,\sigma ]$;
\item $x(s)=x_0+s\theta  +o(s)$ with $o(s)/s\to 0$ as $s\to 0^+$, 
\end{enumerate}
for a suitable $\sigma>0$ depending on the "initial" point $x_0$.  
\end{theorem}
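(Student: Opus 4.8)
The plan is to exhibit the singular points of $E(u)$ near $x_0$ as solutions of a one--parameter family of constrained maximization problems, following the strategy used for the interior, linear--modulus case in \cite[Theorem~4.2]{AC1} (recalled here as Theorem~\ref{t:acprop}), adapted in two respects: to a boundary point $x_0\in\partial\Omega$, by replacing $u$ with the extension $v:=E(u)$, which is genuinely $SC^\alpha$ on the open ball $B_\delta(x_0)$ and satisfies $D^*v(x_0)=D^*u(x_0)$ by property~(2) of Theorem~\ref{t:estensione}; and to the fractional exponent $\alpha$. After this reduction $K:=\co D^*v(x_0)=\co D^*u(x_0)$, the hypothesis \eqref{eq:h} and the data $p_0,\theta$ are unchanged, $\Sigma(v)=\Sigma(E(u))$, and one only has to build the map $s\mapsto x(s)$ for $v$. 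Only $p_0\in\partial K$ needs real treatment (if $p_0$ is in the relative interior of $K$ then $N_K(p_0)=\{0\}$, $\theta=0$, and the statement asks merely for a map $x(s)\to x_0$ with $|x(s)-x_0|=o(s)$ along the singular set, available from \eqref{eq:h}); so assume $\theta\neq0$.

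Two preliminary observations. First, since $p_0\notin D^*v(x_0)$ and $D^*v(x_0)$ is compact, there exist $\rho_0,\eta>0$ with $|Dv(y)-p_0|\ge\eta$ at every differentiability point $y\in B_{\rho_0}(x_0)$ --- otherwise a sequence $y_k\to x_0$ with $Dv(y_k)\to p_0$ would put $p_0$ in $D^*v(x_0)$. Hence $\psi(y):=v(y)-\langle p_0,y\rangle$ has no critical point in $B_{\rho_0}(x_0)$; as a point at which $\psi$ is differentiable and attains a local maximum is a critical point, every interior local maximum of $\psi$ in $B_{\rho_0}(x_0)$ occurs where $v$ is nondifferentiable, i.e.\ at a point of $\Sigma(v)$. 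Second, by the same compactness, for every $\epsilon>0$ there is $\rho(\epsilon)\in(0,\rho_0)$ such that $\dist(Dv(y),D^*v(x_0))<\epsilon$ for all differentiability points $y\in B_{\rho(\epsilon)}(x_0)$; since $-\theta\in N_K(p_0)$ means $\langle\theta,q-p_0\rangle\ge0$ for every $q\in K$, this gives $\langle D\psi(y),\theta\rangle=\langle Dv(y)-p_0,\theta\rangle\ge-\epsilon\,|\theta|$ on $B_{\rho(\epsilon)}(x_0)$: $\psi$ is ``almost nondecreasing'' in the direction $\theta$.

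The construction: for small $s>0$ pick a radius $\beta(s)s$ with $\beta(s)\to0$, set $z_s:=x_0+s\theta$, and let $x(s)$ be a maximum point of $\psi$ over $\overline{B_{\beta(s)s}(z_s)}$. Since the radius is $o(s)$ while $|z_s-x_0|=s|\theta|>0$, one has $x_0\notin\overline{B_{\beta(s)s}(z_s)}$ for small $s$, so $x(s)\neq x_0$; clearly $x(s)\to x_0$; and $|x(s)-x_0-s\theta|\le\beta(s)s=o(s)$, which is property~(4). It thus remains to show that for $s$ small the maximum is attained at an \emph{interior} point of the ball; by the first observation that point then lies in $\Sigma(v)=\Sigma(E(u))$, finishing the proof.

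This last step is the crux --- and the place where \eqref{eq:h} is used essentially. One rules out that $\psi$ attains its maximum over $\overline{B_{\beta(s)s}(z_s)}$ only on the bounding sphere by a quantitative comparison: the ``almost monotonicity'' $\langle D\psi,\theta\rangle\ge-\epsilon$ controls $\psi$ along $\theta$, while the $SC^\alpha$ inequality bounds how much $\psi$ can vary on the sphere by a term of order $(\beta(s)s)^{1+\alpha}$, and these are balanced by letting $\beta(s)\to0$ slowly enough relative to the rate $\epsilon=\epsilon(s)\to0$ and to the exponent $\alpha$, forcing an interior maximizer. This balancing is the main obstacle: for $\alpha=1$ and $x_0$ interior one has $p_0\in D^+u(x_0)=\co D^*u(x_0)$ and a clean quadratic control of $v$ near $x_0$, whereas for $\alpha<1$ the superdifferential $D^+u(x_0)$ can be empty, so the comparison must be run entirely through $\co D^*u(x_0)$, the reachable gradients of $E(u)$ near $x_0$, and the weaker modulus $|\cdot|^{1+\alpha}$ --- which is why a carefully tuned shrinking radius is needed and why the conclusion is only a map $s\mapsto x(s)$ with $x(s)\to x_0$, rather than the Lipschitz arc obtained in Theorem~\ref{t:acprop}.
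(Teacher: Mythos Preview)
The paper does not give its own proof of this theorem: it is quoted from \cite[Theorem~1.5]{ABC} (see also \cite[Theorem~4.2]{A1}) as a preliminary tool. So there is no in-paper argument to compare against directly; what the paper \emph{does} prove is the closely related but weaker Theorem~\ref{t:acfleming}, and that proof is instructive here.

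Your reduction to the extension $v:=E(u)$ and the two preliminary observations are sound and match how the paper uses Theorem~\ref{t:estensione}. The difficulty is the ``crux'' step. Your claim that ``the $SC^\alpha$ inequality bounds how much $\psi$ can vary on the sphere by a term of order $(\beta(s)s)^{1+\alpha}$'' is not justified: such a bound would require $p_0\in D^+v(z_s)$, so that one could apply the superdifferential inequality \eqref{p} at $z_s$ with slope $p_0$. But $D^+v(z_s)=\co D^*v(z_s)$, and as $z_s\to x_0$ the set $D^*v(z_s)$ clusters near $D^*v(x_0)$, not near $p_0$ (which by hypothesis lies \emph{outside} $D^*v(x_0)$). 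Applying \eqref{p} at $z_s$ with an actual $p_s\in D^+v(z_s)$ leaves an uncontrolled linear term $\langle p_s-p_0,y-z_s\rangle$ of order $\beta(s)s$, which swamps the $(\beta(s)s)^{1+\alpha}$ term. So the balancing you describe does not force an interior maximizer, and the argument as written has a genuine gap. (A side remark: for $v\in SC^\alpha$ one always has $\emptyset\neq\co D^*v(x_0)\subset D^+v(x_0)$ via \eqref{p}, so your parenthetical worry that $D^+u(x_0)$ ``can be empty'' is misplaced; the real issue is that $p_0$ need not belong to $D^+v(z_s)$ for $z_s\neq x_0$.)

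For contrast, the paper's proof of Theorem~\ref{t:acfleming} maximizes over a \emph{fixed} ball the penalized function
\[
\psi_t(x)=g(x)-(C+1)|x-x_0|^{1+\alpha}-\langle p_0+tq,\,x\rangle,
\]
where $C$ is a semiconcavity constant. The point is that the inequality $g(x)\le g(x_0)+\langle p_0,x-x_0\rangle+C|x-x_0|^{1+\alpha}$ is available precisely because $p_0\in D^+g(x_0)$; the extra $|x-x_0|^{1+\alpha}$ in the penalty then forces the maximizer $x(t)$ toward $x_0$ quantitatively, and singularity is obtained by contradiction with $p_0\notin D^*g(x_0)$. If you want to salvage your scheme, the natural fix is to anchor the semiconcavity inequality at $x_0$ (where $p_0$ \emph{is} a supergradient) rather than at the moving centre $z_s$, and to build the direction $\theta$ into a penalty or a tilt term as in Theorem~\ref{t:acfleming}, instead of into the centre of a shrinking ball.
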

In order to study the ``propagation'' of singularities from a point on the boundary of a nonconvex obstacle, we need the following  
\begin{theorem}\label{t:fleming} 
Let $\Omega \subset \R^n$ be an open set with a $C^2$-boundary and let $u\in SC^\alpha_{loc}(\overline{\Omega})$, for a suitable $\alpha\in ]0,1]$.  
Let $x_0\in \Sigma (u)\cap \partial\Omega$ and let us suppose that there exists $p_0\in \co D^*u(x_0)\setminus D^*u(x_0)$ such that 
\begin{equation}\label{eq:exitdirection} 
p_0+t\nu (x_0)\notin \co D^*u(x_0), \qquad \forall t>0.
\end{equation}
(Here $\nu (x_0)$ is the exterior unit normal to $\partial \Omega$ at $x_0$.)  
Then, we can find $T >0$ and an arc $x:[0,T]\longrightarrow \Sigma (u)\cap \overline{\Omega}$ such that 
\begin{equation}\label{eq:px}
\begin{cases}
\lim_{t\to 0^+}x(t)=x(0)=x_0,
\\
\Omega \ni x(t)\not= x_0, \qquad \forall t\in ]0,T].
\end{cases}
\end{equation}
\end{theorem}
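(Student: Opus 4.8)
The plan is to reduce the statement to a local one about a semiconcave function on a ball and then to run the boundary propagation construction behind Theorem~\ref{t:abcprop}, but with the underlying maximization confined to $\overline\Omega$, so that the exit condition~\eqref{eq:exitdirection} can be exploited to keep the propagated singular point off $\partial\Omega$. For the localization, use Theorem~\ref{t:estensione} to fix $\delta>0$ and an extension $E(u)\in SC^\alpha(B_\delta(x_0))$ of $u$ with $D^*E(u)(y)=D^*u(y)$ for all $y\in B_\delta(x_0)\cap\partial\Omega$, and set $K:=\co D^*E(u)(x_0)=\co D^*u(x_0)$. Since $D^*u(x_0)$ is compact and $p_0\notin D^*u(x_0)$, the point $p_0$ has a neighbourhood disjoint from $D^*u(x_0)$; moreover \eqref{eq:exitdirection} forces $p_0\in\partial K$ (otherwise the segment $p_0+t\nu(x_0)$, $t$ small, would remain inside $K$), so \eqref{eq:h} holds for $E(u)$ at $x_0$. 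Shrinking $\delta$, one also has $D^*E(u)(y)=D^*u(y)$ for all $y\in B_\delta(x_0)\cap\overline\Omega$ --- by property~$(2)$ on $\partial\Omega$ and since $E(u)\equiv u$ near interior points --- so that $\Sigma(E(u))\cap\overline\Omega\cap B_\delta(x_0)=\Sigma(u)\cap B_\delta(x_0)$. It thus suffices to build a map $x\colon[0,T]\to\Sigma(E(u))$ with $x(0)=x_0$, $\lim_{t\to0^+}x(t)=x_0$ and $\Omega\ni x(t)\ne x_0$ for $t\in\,]0,T]$.

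Next I would carry out the propagation construction of Theorems~\ref{t:acprop} and~\ref{t:abcprop} in constrained form. Pick $\theta$ with $-\theta\in N_K(p_0)$; a projection argument (applied to the points $p_0+t\nu(x_0)\notin K$) shows that \eqref{eq:exitdirection} allows this choice to be made with $\langle\theta,\nu(x_0)\rangle\le0$, whence the ``pushed'' set $P_s:=\{y\in\overline\Omega:\ |y-x_0|\le r,\ \langle y-x_0,\theta\rangle\ge\omega(s)\}$ (with $r$ small and $\omega(s)=o(s)$) is nonempty for small $s>0$. Let $x(s)$ maximize $y\mapsto E(u)(y)-\langle p_0,y-x_0\rangle$ over $P_s$. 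As in the cited proofs one obtains $x(s)\ne x_0$, $x(s)=x_0+s\theta+o(s)$ as $s\to0^+$, and $x(s)\in\Sigma(E(u))$: the maximizer cannot be a differentiability point of $E(u)$, since $-\theta$ exposes $p_0$ as a face of $K$ while $p_0\notin D^*E(u)(x_0)$ and $x(s)$ is optimal --- this is exactly the mechanism in the proof of Theorem~\ref{t:abcprop}. (When some $q\in N_K(p_0)$ satisfies $\langle q,\nu(x_0)\rangle>0$ one may shortcut this step and simply invoke Theorem~\ref{t:abcprop} with $\theta=-q$, since then the arc $x(s)=x_0+s\theta+o(s)$ already lies on the interior side of $\partial\Omega$.)

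The crucial --- and, I expect, hardest --- point is to show that $x(s)\notin\partial\Omega$ for all small $s>0$, so that $x(s)\in\Omega$ and hence, by the localization, $x(s)\in\Sigma(u)$. If $\langle\theta,\nu(x_0)\rangle<0$ this is immediate, because then $\langle x(s)-x_0,\nu(x_0)\rangle<0$ and $x(s)$ lies on the interior side of the hypersurface $\partial\Omega$. The real difficulty is the borderline case $\langle\theta,\nu(x_0)\rangle=0$, i.e.\ the propagation direction tangent to $\partial\Omega$: here one must rule out $x(s)\in\partial\Omega$ using the first-order optimality condition for the constrained maximum together with the upper semicontinuity of $D^*$, the separation of $D^*u(x(s))$ from $p_0$, and \eqref{eq:exitdirection} --- which says precisely that the half-line $\{p_0+t\nu(x_0):t>0\}$ misses $K$. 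Since the Lagrange multiplier attached to the constraint $y\in\overline\Omega$ may a priori be small, turning this into a genuine contradiction requires a quantitative, second-order refinement of the semiconcavity estimates for $E(u)$ along $s\mapsto x(s)$, and this is where the $C^2$ (not merely $C^{1,1}$) regularity of $\partial\Omega$ enters.

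Finally, reparametrising $s\mapsto x(s)$ over a suitable interval $[0,T]$ and using the identity $\Sigma(E(u))\cap\overline\Omega=\Sigma(u)$ near $x_0$ from the localization step, one obtains an arc $x\colon[0,T]\to\Sigma(u)\cap\overline\Omega$ with $x(0)=x_0$, $\lim_{t\to0^+}x(t)=x_0$ and $\Omega\ni x(t)\ne x_0$ for every $t\in\,]0,T]$, which is the assertion. Except for the interior-entry step, the proof is a localization followed by a constrained version of the standard Albano--Cannarsa propagation argument.
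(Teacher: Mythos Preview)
Your proposal leaves a genuine gap at precisely the point you flag as hardest: the borderline case $\langle\theta,\nu(x_0)\rangle=0$. You promise to handle it via ``a quantitative, second-order refinement of the semiconcavity estimates'' together with the Lagrange multiplier for the constraint $y\in\overline\Omega$, but no such argument is supplied, and it is not clear one can be made along the lines you indicate: the multiplier attached to the boundary constraint may be arbitrarily small, and upper semicontinuity of $D^*$ only gives information in the limit, not the quantitative separation from $\partial\Omega$ that you need.

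The paper avoids this difficulty altogether by choosing the perturbation direction differently. Rather than taking $\theta$ with $-\theta\in N_K(p_0)$ and constraining the maximization to $\overline\Omega$, it works on the \emph{full} ball $B_\delta(x_0)$ (via the extension $E(u)$) and takes the exterior normal itself, $q=\nu(x_0)$, as the perturbation vector: for each small $t>0$ one maximizes
\[
y\ \longmapsto\ E(u)(y)-(C{+}1)\,|y-x_0|^{1+\alpha}-\bigl\langle p_0+t\,\nu(x_0),\,y-x_0\bigr\rangle
\]
over a fixed closed ball (this is Theorem~\ref{t:acfleming}). Hypothesis~\eqref{eq:exitdirection} says exactly that $p_0+t\,\nu(x_0)\notin D^+E(u)(x_0)$, which forces the maximum to be strictly positive; combining this with the superdifferential inequality $E(u)(y)\le E(u)(x_0)+\langle p_0,y-x_0\rangle+C|y-x_0|^{1+\alpha}$ yields the one-sided estimate
\[
|x(t)-x_0|^{1+\alpha}\ <\ -\,t\,\bigl\langle x(t)-x_0,\ \nu(x_0)\bigr\rangle\qquad(0<t\le T).
\]
This inequality already places $x(t)$ strictly on the interior side of the tangent hyperplane, with a margin that for small $t$ dominates $|x(t)-x_0|^{2}$; comparing it with the interior tangent ball $\{\,y:\tfrac1{2R}|y-x_0|^2<-\langle\nu(x_0),y-x_0\rangle\,\}\subset\Omega$ furnished by the $C^2$ regularity of $\partial\Omega$ gives $x(t)\in\Omega$ at once. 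No case analysis on the sign of $\langle\theta,\nu(x_0)\rangle$, no constrained maximization, and no Lagrange multiplier are needed.
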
 
\begin{remark} 
Theorem \ref{t:fleming} is based on a method introduced  in \cite{AC0}. We observe that the conclusion in Theorem \ref{t:fleming} is slightly weaker than the one in Theorem \ref{t:abcprop}. Indeed, in Theorem  \ref{t:fleming},   a ``direction of propagation'' is not singled out while this is done in (4) of Theorem \ref{t:abcprop}. On the other hand, the advantage of Theorem \ref{t:fleming} consists in the fact that Assumption \eqref{eq:exitdirection} is weaker than the one of Theorem \ref{t:abcprop}. In other words, in order to apply Theorem \ref{t:fleming}, we need less information on the exposed faces of the convex set $\co D^*u(x_0)$. 
\end{remark}
In order to prove Theorem \ref{t:fleming}, we need the following 
\begin{theorem}\label{t:acfleming} 
Let $g\in SC^\alpha (B_r(0))$ for some $r>0$ and $\alpha \in ]0,1]$, and let $0\in \Sigma (g)$. 
Let 
\begin{equation}\label{12}
p_0\in D^+g(0)\setminus D^*g(0), 
\end{equation}
and suppose that for some vector $q\in \R^n\setminus \{ 0\}$ 
\begin{equation}\label{13} 
p_0+tq\notin D^+g(0),\qquad \forall t>0. 
\end{equation}
Then, a number $T>0$ and an arc $x:[0,T]\longrightarrow B_r(0)$ exist so that 
\begin{itemize}
\item[$(i)$] $|x(t)|^{1+\alpha }<-t\langle x(t) ,q\rangle $, for every $t\in ]0,T]$, 
\item[$(ii)$]  $x(0)=0$ and $|x(t)|< (|q| t)^{1/\alpha }$, for every $t\in ]0,T]$, 
\item[$(iii)$]  $x(t)\in \Sigma (g)$, for every $t\in [0,T]$. 
\end{itemize}  
\end{theorem}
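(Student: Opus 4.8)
The plan is to construct the singular arc $x(\cdot)$ as a suitable gradient-type flow of $g$ starting at the origin, exploiting the hypothesis \eqref{13} to guarantee that the arc immediately leaves the origin in a controlled direction. The heuristic is that $p_0 \in D^+g(0)$ together with \eqref{13} says that $p_0$ sits on the boundary of the (compact, convex) superdifferential $D^+g(0)$, and $q$ points strictly out of $D^+g(0)$ at $p_0$; since $D^+g(0) = \co D^*g(0)$ and $p_0 \notin D^*g(0)$, no reachable gradient can "cancel" the motion in the $-q$ direction, so the arc is forced to remain singular. Concretely, I would follow the scheme of \cite{AC0}: for a small parameter, consider the function $y \mapsto g(y) - \langle p_0, y\rangle + \lambda |y - \text{(something)}|$-type auxiliary functionals, or more directly define $x(t)$ as a maximizer of $y \mapsto -g(y) + \langle p_0, y\rangle - t^{-1}\psi(\langle y, q\rangle)$ on a small ball, choosing the penalization $\psi$ so that the maximizer is pushed in the direction where $\langle y, q\rangle < 0$.

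First I would set up the auxiliary maximization problem. Fix a small radius $\rho \le r$ and, for $t > 0$ small, consider
\begin{equation*}
\max_{|y| \le \rho}\Big( \langle p_0, y\rangle - g(y) + \langle y, q\rangle/t \Big)^{-1},
\end{equation*}
or rather (to keep things bounded and to use the sign of $\langle y, q\rangle$) I would penalize $\langle y, q\rangle > 0$ heavily; the point is to produce a point $x(t)$ with $\langle x(t), q\rangle < 0$ and $x(t) \ne 0$. Using semiconcavity of $g$ with exponent $\alpha$, the optimality condition at $x(t)$ gives an inclusion of the form $p_0 + (\text{direction related to } q)/t \in D^+g(x(t))$ up to an error controlled by $C|x(t)|^\alpha$ (the fractional-semiconcavity defect). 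Comparing the value at $x(t)$ with the value at $0$ and using the semiconcavity inequality \eqref{eq:sca} centered at the origin yields the quantitative estimate $(i)$, namely $|x(t)|^{1+\alpha} < -t\langle x(t), q\rangle$, and $(ii)$, the bound $|x(t)| < (|q|t)^{1/\alpha}$, follows from $(i)$ by Cauchy–Schwarz: $|x(t)|^{1+\alpha} < -t\langle x(t),q\rangle \le t|q|\,|x(t)|$. In particular $x(t) \to 0$ as $t \to 0^+$ and $x(t) \ne 0$ for $t \in\, ]0,T]$ once we know $x(t)$ is a nontrivial maximizer — and it is nontrivial precisely because \eqref{13} prevents $y = 0$ from being optimal for $t$ small (moving a little in the $-q$ direction strictly increases the functional).

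The main obstacle, and the heart of the argument, is step $(iii)$: showing $x(t) \in \Sigma(g)$ for all $t \in [0,T]$. This is where assumption \eqref{13} must be used in full strength. The idea is: if $x(t)$ were a differentiability point of $g$, then $D^+g(x(t)) = \{Dg(x(t))\} = \{p\}$ would be a singleton, and the first-order optimality condition at $x(t)$ would force $p = p_0 + s\,q/t$ for some $s \ge 0$ (with $s > 0$ since $x(t)$ is interior and $\langle x(t), q\rangle < 0$) — but then one argues, via upper semicontinuity of $D^*g$ and a limiting argument as $t \to 0^+$ along a subsequence of such "regular" times, that $p_0 + s'q \in D^*g(0) \subset D^+g(0)$ for some $s' > 0$, contradicting \eqref{13}. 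Making this rigorous requires care with the fractional error terms and with the fact that \eqref{13} is about $D^+g(0)$ while the limit of $Dg(x(t))$ lands in $D^*g(0)$; since $D^*g(0) \subset \co D^*g(0) = D^+g(0)$, the contradiction still goes through. The argument also needs $0 \in \Sigma(g)$ to exclude the degenerate possibility that $g$ is already differentiable near the origin, and it needs the maximizer to be genuinely interior to $B_\rho(0)$, which the bound $(ii)$ guarantees for $T$ small. I would organize $(iii)$ as: (a) set $R = \{t \in\, ]0,T] : g \text{ differentiable at } x(t)\}$; (b) show $R$ has an accumulation point at $0$ if it is infinite, or handle it directly; (c) derive the contradiction with \eqref{13}; conclude $R = \emptyset$, hence $x(t) \in \Sigma(g)$, and finally note $x(0) = 0 \in \Sigma(g)$ by hypothesis.
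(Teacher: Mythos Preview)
Your overall strategy --- define $x(t)$ as the maximizer of a perturbed functional, following \cite{AC0} --- is exactly the one the paper uses. The paper's functional is
\[
\psi_t(y)=g(y)-(C+1)\,|y|^{1+\alpha}-\langle p_0+tq,\,y\rangle,
\]
maximized over $\overline B_r(0)$, where $C$ is a semiconcavity constant for $g$ at $0$. The extra subtraction of $(C+1)|y|^{1+\alpha}$ is what converts the semiconcavity inequality $g(y)\le \langle p_0,y\rangle+C|y|^{1+\alpha}$ into the \emph{strict} bound $(i)$. Your candidate functionals are vague and, as written (maximizing $-g$ plus affine terms, or carrying a $1/t$ coefficient on $\langle y,q\rangle$), would not produce a first-order condition landing in $D^+g$; but this is a matter of tuning rather than a conceptual error.

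The genuine gap is in your argument for $(iii)$. Suppose $x(t_k)\notin\Sigma(g)$ along a sequence $t_k\to 0^+$. With the correct functional the first-order optimality condition at the interior maximizer reads
\[
Dg(x(t_k))=p_0+t_k q+(1+\alpha)(C+1)\,|x(t_k)|^{\alpha-1}x(t_k).
\]
Both perturbation terms vanish as $k\to\infty$: the first because $t_k\to 0$, the second because $|x(t_k)|^\alpha\to 0$ by $(ii)$. Hence $Dg(x(t_k))\to p_0$, so $p_0\in D^*g(0)$, contradicting \eqref{12}. Your proposal instead asserts that the limit is $p_0+s'q$ for some $s'>0$ and invokes \eqref{13}; but no positive $s'$ survives the limit (and with your $1/t$ scaling the gradients would blow up rather than converge). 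Hypothesis \eqref{13} is used only at the earlier stage, to ensure $\max\psi_t>0$ and hence $x(t)\ne 0$; it plays no role in the contradiction that yields $(iii)$, which rests entirely on \eqref{12}.
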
  
The proof of Theorem \ref{t:acfleming} follows the same lines of the one of \cite[Theorem 4.1]{AC0}. We provide such a proof for the reader convenience. 
\begin{proof} 
Without loss of generality, possibly reducing $r$,  we may assume that $g(0)=0$ and 
\begin{equation}\label{eq:supdif}
g(x)-\langle p_0,x\rangle -C |x|^{1+\alpha }\le 0,\qquad  \forall x\in B_r(0),  
\end{equation}
for a suitable $C>0$. Set 
$$
\phi (x):=g(x) -(C+1) |x|^{1+\alpha }
$$
Notice that $\phi \in SC^\alpha (B_r(0))$ and $D^+g(0)=D^+\phi (0)$. 
Define 
$$
\psi_t (x)= \phi (x) -\langle p_0+tq , x\rangle ,\qquad x\in \bar{B}_r(0), 
$$
and observe that,  due to  \eqref{13} and the semiconcavity of $g$, we can find $y_t\in B_r(0)$ such that 
\begin{equation}\label{eq:yt}
\psi_t (y_t)>0 , 
\end{equation} 
for every $t>0$. Then, consider the maximum of the function $\psi_t$ over the ball $\bar{B}_r(0)$ and let $x(t)\in \bar{B}_r(0)$ so that 
$$
\max_{ x\in \bar{B}_r(0)} \psi_t (x)=\psi_t (x(t)), \qquad (t>0). 
$$
We define $x(0)=0$ 
 Now, \eqref{eq:yt} yields that $\psi_t (x(t))>0$ so, by \eqref{eq:supdif}, we deduce that 
 $$
 0< -t \langle q,x\rangle - |x|^{1+\alpha} ,\qquad \forall t>0.
 $$
So (i) and (ii) follow. In order to complete the proof of Theorem \ref{t:acfleming} it remains to show that there is $T_0>0$ such that $x(t)\in \Sigma (g)$ for every $t\in [0,T_0]$. We note that for, every $t>0$ sufficiently small, $x(t)\in B_r(0)$ and so 
$$
0\in D^+g(x(t)).
$$
Then, a direct computation yields that 
\begin{equation}\label{eq:pt} 
p_0+tq+ (1+\alpha )(C+1) |x(t)|^{\alpha -1}\,  x(t)\in D^+g(x(t)),   
\end{equation}
for every $t>0$ sufficiently small.  
We claim that there exists $T>0$ such that $x(t)\in \Sigma (g)$ for every $t\in (0,T]$. Let us argue by contradiction assuming that there is a positive sequence $t_k$, converging to $0$ such that $x(t_k)\notin \Sigma (g)$. Then, in view of \eqref{eq:pt}, we have that 
$$
Dg(x(t_k))= p_0+t_k q+ (1+\alpha )(C+1) |x(t_k)|^{\alpha -1}\,  x(t_k), 
$$
and, taking the limit as $k\to \infty$ in the formula above, we find that $p_0\in D^* g(0)$ in contrast with Assumption \eqref{12}.  This proves Conclusion (iii).
\end{proof} 
Now, we are ready to prove Theorem \ref{t:fleming} 

\begin{proof} 
Let us denote by $E(u)$ the semiconcave extension of $u$ provided by Theorem \ref{t:estensione}. Then, we can apply Theorem \ref{t:acfleming} to $E(u)$, defined on $B_\delta(x_0)$ for some $\delta>0$. In order to complete the proof, we only need to show that, possibly reducing $T$,  $x(t)\in \Omega$ for every $t\in ]0,T]$. Owing to the $C^2$ regularity of $\partial \Omega$, we can find $R>0$ such that 
$$
\left\{ y\in \R^n \ | \ \frac 1{2R} |y-x_0|^2< -\langle \nu (x_0) , y-x_0\rangle \right \} \subset \Omega . 
$$
Furthemore, Theorem \ref{t:acfleming} (i) yields that 
$$
\frac 1 t |x(t)-x_0|^{1+\alpha } <   -\langle \nu (x_0) , x(t)-x_0\rangle  ,\qquad \forall t\in ]0,T]. 
$$
Then, since  $t |x(t)-x_0|^2< 2R  |x(t)-x_0|^{1+\alpha } $ for $0<t<   2R/\delta^{1-\alpha} $,  we conclude that, possibly reducing $T$, $x(t)\in \Omega$ for every $t\in ]0,T]$. This completes our proof.  
\end{proof} 

\subsection{Preliminaries on length minimizers} 

For $x\in  \overline{\R^n\setminus \mathcal{O}}$, we define 
$$ 
\Gamma^* [x]=\{ \gamma\in \Gamma [x] \ | \    \tau (\gamma )=\inf_{\Gamma [x] } \tau \}.
$$
In other words, $\Gamma^* [x]$ is the set of all the length minimizers joining $x$ with $k_0$. 
If $A\equiv I$,  it is clear that any shortest path consists of straight-line segments and curves of minimal length on the boundary of the obstacle. 
Then,  since $\partial\mathcal{O}\in C^{2}$, any length minimizer is piecewise $C^2$ and it is (globally) $C^{1,1}$. 

\begin{remark} \label{r:geo} 
The $C^{1,1}$ regularity of length minimizers is a well-kown fact. 
To our knowldge,  $C^1$ regularity was established in \cite{AA} for $C^3$-Riemannian manifold with $C^1$-boundary. 
Furthermore, in the case of an obstacle with $C^3$ boundary,  the fact that   length minimizers  have first and second derivatives in $L^2$ follows from the results in \cite{MS}. 
Finally, in the case of an obstacle with a $C^2$ boundary, the $C^{1,1}$ regularity of  length minimizers is a consequence of the results given in \cite{Ca} (see also  \cite{ER}). In particular, \cite[Theorem 3.2]{Ca} ensures that, given a compact set $K\subset \R^n$, there exists a constant $c=c(K)$ such that for every $x\in K\cap (\overline{\R^n\setminus \mathcal{O}})$ and $\gamma \in \Gamma^*[x]$ we have that 
\begin{equation}\label{eq:bgeo}
\| \ddot{\gamma}\|_\infty \le c.
\end{equation}
\end{remark} 

\begin{remark} \label{r:boundaryreg} 
Observe that, if a length minimizer $\gamma$ touches $\mathcal O$ at an ``interior'' point $\gamma(t)\,,\;(t\in ]0,d(\gamma(0))[$), then $\dot{\gamma}(t)$ is tangent to $\mathcal O$ at $\gamma(t)$. This fact is a consequence of the  $C^{1,1}$ regularity of minimizers.
\end{remark} 
 
We now proceed  to relate the elements of $\Gamma^* [x]$ with suitable generalized gradients of $d$.  
We recall that if  $d\in SC_{loc}^\alpha  (  \overline{\R^n \setminus \mathcal{O}}\setminus \{ k_0\})$ for a suitable $\alpha\in ]0,1]$, as a consequence of  Rademacher's Theorem, we have that $D^*d(x)\not=\emptyset$,
for every $x\in \R^n \setminus (\mathcal{O}\cup \{ k_0\} )$.  
Furthermore, for every compact set $K\subset  \overline{\R^n \setminus \mathcal{O}}\setminus \{ k_0\} $, every $x,y\in K$ such that $[x,y]\subset K$, and every $p\in D^+d(x)$, we have that   
\begin{equation}\label{p}
d(y)\le d(x)+\langle p,y-x\rangle +C |y-x|^{1+\alpha}, 
\end{equation}
for a suitable constant $C$ depending on $K$. 
We point out that \eqref{p} is a consequence of the assumption  that $d\in SC_{loc}^\alpha  (  \overline{\R^n \setminus \mathcal{O}}\setminus \{ k_0\}))$ for a suitable $\alpha\in ]0,1]$. 
\begin{lemma}\label{l:vi}
Under Assumption (O), let $k_0\in \R^n\setminus \mathcal{O}$, let $A\equiv I$,  and let $d$ be given by \eqref{d}. Then, for every $x\in (\overline{\R^n\setminus \mathcal{O}})\setminus \{ k_0\}$ and  $\gamma \in \Gamma^*[x]$, we have that 
\begin{equation}\label{eq:s}
-\dot{\gamma}(t)\in D^*d(\gamma (t)),\qquad \forall t\in [0,d(x)].
\end{equation}
Furthermore\footnote{Hereafter,  even at a point $y\in\partial\mathcal O$, we have kept the notation  $Dd(y)$ to denote the unique element of $D^*d(y)$ whenever the last set reduces to a singleton.},  
\begin{equation}\label{eq:rlt}
D^*d(\gamma (t))=\{ Dd(\gamma (t))\},\quad \forall t\in ]0,d(x)[. 
\end{equation} 
Finally, for every $x\in (\overline{\R^n\setminus \mathcal{O}})\setminus \{ k_0\}$ and for every $p\in D^*d(x)$ there exists $\gamma \in \Gamma^*[x]$ such that  $-\dot{\gamma}(0)=p$.
\end{lemma}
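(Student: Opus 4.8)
The statement has three parts, and the natural order of attack is: first \eqref{eq:rlt} (differentiability at interior points of a minimizer), then \eqref{eq:s} (reachable gradients along a minimizer), and finally the last assertion (every reachable gradient is realized by a minimizer). The whole argument rests on two structural facts recalled in the excerpt: that $d$ is a viscosity solution of the eikonal equation \eqref{eq:iconale} with $A\equiv I$ (so $|Dd|=1$ away from $k_0$ and the obstacle, in the viscosity sense), that $d\in SC^{1/2}_{loc}(\overline{\R^n\setminus\mathcal O}\setminus\{k_0\})$ by Theorem~\ref{t:rb}, and the $C^{1,1}$ regularity of length minimizers with the uniform bound \eqref{eq:bgeo} from Remark~\ref{r:geo}.

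\textbf{Step 1: differentiability along the interior of a minimizer.} Fix $\gamma\in\Gamma^*[x]$ and $t_0\in\,]0,d(x)[$, set $y_0=\gamma(t_0)$. The key observation is the dynamic programming identity: if $\gamma$ is a shortest subunit curve from $x$ to $k_0$, then $d(\gamma(t))=d(x)-t$ for all $t\in[0,d(x)]$, and moreover the restriction of $\gamma$ to $[t_0,d(x)]$ is a minimizer from $y_0$ while the time-reversed restriction to $[0,t_0]$ realizes $d(\cdot)$ "going backwards". I would exploit this by showing that $-\dot\gamma(t_0)$ is simultaneously a subgradient ($D^-$) and a supergradient ($D^+$) of $d$ at $y_0$. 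The $D^+$ inclusion comes for free from $SC^{1/2}$ combined with the fact that along the minimizer $d$ decreases at unit rate: using \eqref{p} with the $C^{1,1}$ bound \eqref{eq:bgeo} one gets, for $z$ near $y_0$, $d(z)-d(y_0)\le \langle p,z-y_0\rangle + C|z-y_0|^{3/2}$ for every $p\in D^+d(y_0)$; the point is to produce the matching lower bound $d(z)-d(y_0)\ge \langle -\dot\gamma(t_0),z-y_0\rangle + o(|z-y_0|)$. For this I concatenate a short segment (or subunit curve) from $z$ to a point $\gamma(t_0+h)$ on the minimizer with the tail of $\gamma$: this gives $d(z)\le |z-\gamma(t_0+h)| + d(\gamma(t_0+h)) = |z-\gamma(t_0+h)| + d(y_0)-h$; choosing $h=|z-y_0|$ and Taylor-expanding $\gamma(t_0+h)=y_0+h\dot\gamma(t_0)+O(h^2)$ yields $d(z)-d(y_0)\le |z-y_0-h\dot\gamma(t_0)| - h + O(h^2) = -\langle\dot\gamma(t_0),z-y_0\rangle + o(|z-y_0|)$ since $|\dot\gamma(t_0)|=1$. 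Hence $-\dot\gamma(t_0)\in D^-d(y_0)$ as well, and a locally semiconcave function with both a sub- and a super-differential at a point is differentiable there, with $Dd(y_0)=-\dot\gamma(t_0)$. This gives \eqref{eq:rlt} and, incidentally, $-\dot\gamma(t_0)\in D^*d(y_0)$ for interior $t_0$.

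\textbf{Step 2: the endpoints via a limiting argument.} For $t=0$ (the case $t=d(x)$ is the target point, excluded) I pass to the limit: by Step 1, $Dd(\gamma(t))=-\dot\gamma(t)$ for $t\in\,]0,d(x)[$, and $\dot\gamma$ is continuous (indeed Lipschitz) up to $t=0$, so letting $t\to0^+$ gives points $\gamma(t)\to x$ with $Dd(\gamma(t))\to -\dot\gamma(0)$; by the very definition \eqref{eq:dstar} of reachable gradients this shows $-\dot\gamma(0)\in D^*d(x)$, completing \eqref{eq:s}. (One subtlety when $x\in\partial\mathcal O$: the approximating points $\gamma(t)$ lie in $\overline{\R^n\setminus\mathcal O}$, not necessarily in the interior; here one invokes the extension $E(d)$ from Theorem~\ref{t:estensione} and its property (2), so that reachable gradients computed in $\overline{\R^n\setminus\mathcal O}$ agree with those of a genuine locally semiconcave function on a full ball.)

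\textbf{Step 3: the converse — every $p\in D^*d(x)$ is $-\dot\gamma(0)$ for some minimizer.} Take $p\in D^*d(x)$, so there are $x_h\to x$ with $Dd(x_h)\to p$ and $d$ differentiable at $x_h$. Pick any minimizer $\gamma_h\in\Gamma^*[x_h]$; by the characterization of $D^+$ for semiconcave solutions of the eikonal equation, differentiability of $d$ at $x_h$ forces $D^*d(x_h)=\{Dd(x_h)\}$, and by the (already established, for interior minimizer points) identity together with a short-time version at the starting point, $Dd(x_h)=-\dot\gamma_h(0)$. Now use the uniform second-derivative bound \eqref{eq:bgeo} on a compact neighborhood of $x$: the $\gamma_h$ are uniformly $C^{1,1}$, hence (Arzelà–Ascoli applied to $\gamma_h$ and $\dot\gamma_h$) a subsequence converges in $C^1_{loc}$ to a curve $\gamma$ with $\gamma(0)=x$, $\dot\gamma(0)=\lim\dot\gamma_h(0)=-p$, $|\dot\gamma|\le1$, $\gamma$ staying in $\overline{\R^n\setminus\mathcal O}$. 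Finally $\gamma\in\Gamma^*[x]$: it is subunit by construction, and $\tau(\gamma)\le\liminf\tau(\gamma_h)=\liminf d(x_h)=d(x)$ by continuity of $d$ (lower semicontinuity of the arrival time under $C^1_{loc}$ convergence, using that the target is a point reached at a uniformly bounded time), while $\tau(\gamma)\ge d(x)$ trivially; so $\gamma$ is a minimizer with $-\dot\gamma(0)=p$.

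\textbf{Main obstacle.} The routine parts are the compactness in Step 3 and the semiconcavity estimate in Step 1. The genuinely delicate point is the behavior \emph{at boundary points of the obstacle}: when $x\in\partial\mathcal O$, or more seriously when the minimizer $\gamma$ runs \emph{along} $\partial\mathcal O$ for a subinterval, one must be careful that "$-\dot\gamma(t)\in D^*d(\gamma(t))$" and the differentiability \eqref{eq:rlt} still hold, since on the obstacle boundary the eikonal equation need not hold and $d$ is only $SC^{1/2}$, not $SC^1$. The way around this is exactly the concatenation/Taylor argument of Step 1, which never uses the eikonal equation directly — only $|\dot\gamma(t)|\le1$ (in fact $=1$ by minimality, even on the boundary, since an arc-length parametrized $C^{1,1}$ minimizer has unit speed) and the $C^{1,1}$ bound \eqref{eq:bgeo} — together with the extension Theorem~\ref{t:estensione} to legitimize differentiability statements at points of $\partial\mathcal O$. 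I expect verifying that the concatenated competitor curve is admissible (stays in $\overline{\R^n\setminus\mathcal O}$ and is subunit after reparametrization) near a boundary-grazing minimizer to be the fussiest technical step.
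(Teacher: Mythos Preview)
Your Step~1 contains a genuine error. You announce that you will establish the \emph{lower} bound $d(z)-d(y_0)\ge \langle -\dot\gamma(t_0),z-y_0\rangle + o(|z-y_0|)$ (i.e.\ $-\dot\gamma(t_0)\in D^-d(y_0)$), but the concatenation you perform produces an \emph{upper} bound on $d(z)$, so at best it could yield a $D^+$ inclusion. Worse, the Taylor claim
\[
|z-y_0-h\dot\gamma(t_0)| - h \;=\; -\langle\dot\gamma(t_0),z-y_0\rangle + o(|z-y_0|)\qquad (h=|z-y_0|)
\]
is false: take $z-y_0\perp\dot\gamma(t_0)$, then the left side is $(\sqrt 2-1)\,|z-y_0|$ while the right side is $o(|z-y_0|)$. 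So neither the sign nor the expansion works as stated, and the differentiability of $d$ at $y_0$ is not established by this argument. The admissibility worry you flag at the end (the straight segment from $z$ to $\gamma(t_0+h)$ may enter $\mathcal O$) is an additional obstruction that you do not resolve.

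The paper avoids both difficulties by a different, much shorter route. It first proves \eqref{eq:s} directly (not via \eqref{eq:rlt}): from $d(\gamma(t+h))-d(\gamma(t))=-h$ and the formula for directional derivatives of semiconcave functions one gets $\min_{q\in\co D^*d(\gamma(t))}\langle q,\dot\gamma(t)\rangle=-1$; since every $q\in D^*d(\gamma(t))$ lies on the unit sphere (eikonal equation) and $|\dot\gamma(t)|\le 1$, the minimizer is forced to be $-\dot\gamma(t)$ and lies in $D^*d(\gamma(t))$. For \eqref{eq:rlt} the paper then tests the semiconcavity inequality \eqref{p} only at the point $y=\gamma(t-h)$, which is on the minimizer: from $d(\gamma(t-h))-d(\gamma(t))=h$ one obtains $1\le\langle p,-\dot\gamma(t)\rangle$ for every $p\in D^*d(\gamma(t))$, whence $p=-\dot\gamma(t)$. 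No competitor curve is ever built, and the admissibility issue for \eqref{p} near $\partial\mathcal O$ is handled by passing to the extension $E(d)$ from Theorem~\ref{t:estensione}.

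Your Steps~2 and~3 are essentially the paper's own arguments (limit along $\gamma(t)$ as $t\to0^+$ using the extension at boundary points, and Arzel\`a--Ascoli via the uniform bound \eqref{eq:bgeo}). One small slip: you exclude $t=d(x)$ in Step~2, but the statement \eqref{eq:s} includes it; the paper obtains this endpoint by the same $C^{1,1}$ limit.
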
 
Here and in the sequel we tacitly assume that $\dot{\gamma}(0)$ is the right derivative while $\dot{\gamma }(d(x))$ stands for the left derivative, i.e. $\dot{\gamma}(0)=\lim_{t\to 0^+}\frac{\gamma (t)-\gamma (0)}t$ and $\dot{\gamma}(d(x))=\lim_{t\to d(x)^-}\frac{\gamma (d(x))-\gamma (t)}{d(x)-t}$. 
\begin{remark} 
Observe that Lemma \ref{l:vi} ensures the existence of an injective map 
$$
D^*d(x)\longrightarrow \Gamma^*[x] 
$$
which fails, in general, to be onto.
In particular, there can be more than one minimizer starting from $x$ with the same initial velocity. This branching phenomenon for length minimizers depends on the presence of an obstacle. Indeed, for unconstrained problems, the uniqueness of solutions to the Hamiltonian formulation of the Maximum Principle allows to construct a bijection between $\Gamma^*[x]$ and $D^*d(x)$ (see e.g. \cite[Theorems 6.4.9 and 8.4.14]{CS}).       
\end{remark} 
\begin{proof} 
We  show first that \eqref{eq:s} holds.
 For every $\gamma \in \Gamma^*[x]$, we have that $d(x)-t=d(\gamma (t))$, for every $t\in [0,d(x)]$. We claim that 
\begin{equation}\label{eq:divdir}
-1=\min_{q\in \co D^*d(\gamma (t) )}\langle q, \dot{\gamma} (t)\rangle= \min_{q\in  D^*d(\gamma (t))}\langle q,\dot{\gamma} (t)\rangle =
\langle p_t,\dot{\gamma} (t)\rangle ,
\end{equation}
for every $t\in [0,d(x)[$ and for a suitable $p_t\in D^*d(\gamma (t))$. Indeed, for all $t\in [0,d(x)[$ such that $\gamma (t)\notin \mathcal{O}$, \eqref{eq:divdir} follows from \cite[Theorem~3.3.6]{CS}   and the fact that  $\Gamma^*[x]\subset C^{1,1}([0,d(x)];\R^n)$.
(We observe that the point $t=d(x)$ is excluded because
$\gamma (d(x))=k_0$ and $d(\cdot )$ is not semiconcave on a neighborhood of $k_0$.)
On the other hand, for all $t\in [0,d(x)[$ such that $\gamma (t)\in \mathcal{O}$, one can repeat the same argument as above 
using the extension of $d$ given by Theorem \ref{t:estensione} instead of $d$ and recalling that $D^*E(d)(\gamma (t))=D^*d(\gamma (t))$. 
  
Then, since $|\dot{\gamma} (t)|\le 1$ and $|p_t|=1$, we conclude that $p_t=-\dot{\gamma}(t)$. Thus,  \eqref{eq:s} holds for every $t\in [0,d(x)[$. 
This fact and the inclusion $\Gamma^*[x]\subset C^{1,1}([0,d(x)];\R^n)$ yield that also $\dot{\gamma} (d(x))\in D^*d(k_0)$ (we point out that, because of the Lipschitz continuity of around $k_0$, the set $ D^*d(k_0)$ is well-defined).
This completes the proof of \eqref{eq:s}. 

Now, let us show that \eqref{eq:rlt} hold true. 
For every $x\in (\overline{\R^n\setminus \mathcal{O}})\setminus \{ k_0\}$, $\gamma \in \Gamma^*[x]$, $t\in ]0,d(x)[$, and  $p\in D^*d(\gamma (t))$ by Theorem \ref{t:rb} and \eqref{p} we have that 
$$
d(\gamma (t-h))\le d(\gamma (t))+\langle p, \gamma (t-h)-\gamma (t)\rangle +C |\gamma (t-h)-\gamma (t)|^{\frac 32} ,
$$
for every $h>0$ suitably small.  We recall that $d(\gamma (t))=d(x)-t$ and $d(\gamma (t-h))=d(x)-t+h$. Then, we find 
$$
h\le \langle p, \gamma (t-h)-\gamma (t)\rangle +C |\gamma (t-h)-\gamma (t)|^{\frac 32}\,.
$$
So, dividing both the sides of the inequality above by $h$ and taking the limit as $h\to 0^+$, we conclude that 
$$
1\le \langle p, -\dot{\gamma} (t)\rangle .
$$
(Here we use the fact that $\gamma$ is differentiable at $t$.)
Then, recalling that $|p|=1$ and $|\dot{\gamma} |\le 1$, we conclude that $\dot{\gamma }(t)=-p$. Consequently,   $D^*d(\gamma (t))=\{ -\dot{\gamma}(t)\}$. This implies that $d$ is differentiable along $\gamma$ and $\dot{\gamma}(t)=-Dd(\gamma (t))$, for every $t\in ]0,d(x)[$, by possibly appealing to the extension of $d$ given by Theorem \ref{t:estensione} as above. 
Notice that the endpoints are always excluded.

Finally, to prove that every element of $D^*d(x)$ can be taken as the initial velocity of a length minimizer,
let $p\in D^*d(x)$ and let $x_h$ be a sequence converging to $x$ such that $Dd(x_h)\to p$ as $h\to \infty$. 
Then, there exists $\gamma_h\in \Gamma^*[x_h]$ such that $\dot{\gamma}_h(0)=-Dd(x_h)$. Hence, possibly taking a subsequence, we have that $\gamma_h$ uniformly converges to a limit $\gamma \in \Gamma ^*[x]$. Therefore, by the bound on the second derivatives given in \eqref{eq:bgeo}, we may assume that, up to a subsequence,  
\begin{equation}\label{eq:crlt}
\lim_{h\to\infty} \| \dot{\gamma}_h-\dot{\gamma} \|_\infty =0. 
\end{equation}
 Now, let $s_h\in ]0,\min\{ d(x),d(x_h)\}]$ be a decreasing sequence converging to $0$ such that  
\begin{enumerate} 
\item   $|\dot{\gamma}_h (s_h)- \dot{\gamma} (s_h)|<\frac 1h$ (this can be achieved because of \eqref{eq:crlt});
\item $|\dot{\gamma}_h(s_h)-\dot{\gamma}_h(0)|<\frac 1h$ (here we use the fact that $\Gamma^*\subset C^{1,1}$),   
\end{enumerate}  
for every $h\in \N$.  Then, using once more the inclusion $\Gamma^* \subset C^{1,1}$, (1) and (2) above, 
 we conclude  that 
$$
\dot{\gamma}(0)=\lim_{h\to \infty } \dot{\gamma}(s_h)=\lim_{h\to \infty } \dot{\gamma}_h(s_h)=\lim_{h\to \infty }\dot{\gamma}_h (0)=-\lim_{h\to \infty }Dd(x_h)=-p.
$$
This completes our proof.  
\end{proof} 

\subsection{Proof of Proposition \ref{cm} (i)} 
In this proof, we use a construction inspired by an example given in \cite[p.1019]{CM}.

Let $x\in S(k_0)\cap \partial \mathcal{O}$ and assume by, contradiction, that $d$ belongs to $SC^1(\overline{B_\delta (x_0)\setminus \mathcal{O}})$ for some $\delta >0$. 
Suppose, in addition, that $D^*d(x) $ is a singleton, that is, $D^*d(x)=\{p\}$. This implies no loss of generality since,  as we shall see in what follows, points satisfying such a property can be found arbitrarily close to any point in $S(k_0)\cap \partial \mathcal{O}$.                 

Consider the extension $E(d)$ given by Theorem~\ref{t:estensione}. By possibly reducing $\delta>0$, we may assume that $E(d)$ is defined on $B_\delta (x)$. Moreover,
\begin{equation*}
D^*E(d)(x)=\{p\}\,.
\end{equation*}
Let $C$ be a linear semiconcavity constant for $E(d)$ on $B_\delta (x)$. Then, 
\begin{enumerate} 
\item $p\not= 0$ (in fact, $|p|=1$ since $d$ is a solution of the eikonal equation); 
\item $E(d)(y)\le E(d)(x)+\langle p,y-x\rangle +C |y-x|^2\,,\;\forall\,y\in B_\delta (x)$, in view of  \eqref{p}.
\end{enumerate} 
We observe that, by (1) and (2) above, 
\begin{multline}\label{eq:strinc}
E(d)(y)\le E(d)(x)+\langle p,y-x\rangle +C |y-x|^2
\\
=E(d)(x)+ C \left ( \left | y-x+\frac{p}{2C}  \right |^2-\frac 1{4C^2}  \right ). 
\end{multline}
Hence,  we have that 
\begin{equation}\label{uniff}
E(d)(y)\le E(d)(x), \quad \forall y\in \overline{B}_{\frac 1{2C}}\left ( x-\frac{p}{2C} \right )\cap B_\delta (x).
\end{equation} 
Let us consider the curve $\gamma:[0,d(x)]\longrightarrow \overline{\R^n\setminus \mathcal{O}}$ such that $\gamma (0)=x$,  $\gamma (d(x))=k_0$ and $\dot{\gamma}(0)=-p$.  We point out that the existence of such a curve follows from Lemma \ref{l:vi}, together with the fact that $D^*d(\gamma(t))$ reduces to a singleton for all $t\in]0,d(x)[$. Moreover, 
\begin{equation}
\label{eq:s1}
\mbox{there exists}\; s>0\;\mbox{such that}\; \gamma (t)\in \partial \mathcal{O}\cap B_\delta (x)\,,\quad\forall t\in [0,s[
\end{equation}
(because otherwise $x\in I(k_0)$) and this justifies our additional assumption $D^*d(x)=\{p\}$.


\tikzset{every picture/.style={line width=0.75pt}} 

\begin{tikzpicture}[x=0.75pt,y=0.75pt,yscale=-1,xscale=1]

\draw  [color={rgb, 255:red, 0; green, 0; blue, 0 }  ,draw opacity=0.34 ][fill={rgb, 255:red, 74; green, 144; blue, 226 }  ,fill opacity=1 ] (273.5,186.5) .. controls (273.5,144.8) and (307.3,111) .. (349,111) .. controls (390.7,111) and (424.5,144.8) .. (424.5,186.5) .. controls (424.5,228.2) and (390.7,262) .. (349,262) .. controls (307.3,262) and (273.5,228.2) .. (273.5,186.5) -- cycle ;
\draw  [dash pattern={on 0.84pt off 2.51pt}]  (305.5,109.75) -- (203,286.75) ;
\draw  [dash pattern={on 0.84pt off 2.51pt}]  (544,232.75) -- (203,286.75) ;
\draw  [dash pattern={on 0.84pt off 2.51pt}]  (349,111) -- (305.5,109.75) ;
\draw  [color={rgb, 255:red, 0; green, 0; blue, 0 }  ][line width=3] [line join = round][line cap = round] (285.46,144.49) .. controls (285.46,144.49) and (285.46,144.49) .. (285.46,144.49) ;
\draw  [color={rgb, 255:red, 0; green, 0; blue, 0 }  ][line width=3] [line join = round][line cap = round] (346.11,110.71) .. controls (346.11,110.71) and (346.11,110.71) .. (346.11,110.71) ;
\draw  [color={rgb, 255:red, 0; green, 0; blue, 0 }  ][line width=3] [line join = round][line cap = round] (305.75,109.85) .. controls (305.75,109.85) and (305.75,109.85) .. (305.75,109.85) ;
\draw   (264.82,109.75) .. controls (264.82,87.28) and (283.03,69.07) .. (305.5,69.07) .. controls (327.97,69.07) and (346.18,87.28) .. (346.18,109.75) .. controls (346.18,132.22) and (327.97,150.43) .. (305.5,150.43) .. controls (283.03,150.43) and (264.82,132.22) .. (264.82,109.75) -- cycle ;
\draw [color={rgb, 255:red, 65; green, 117; blue, 5 }  ,draw opacity=1 ][line width=1.5]    (285.5,144.25) -- (203,286.75) ;
\draw  [draw opacity=0][line width=1.5]  (286.69,144.59) .. controls (292.33,135.65) and (299.96,127.91) .. (309.44,122.09) .. controls (320.78,115.12) and (333.38,111.76) .. (346.03,111.66) -- (351.5,190.56) -- cycle ; \draw  [color={rgb, 255:red, 65; green, 117; blue, 5 }  ,draw opacity=1 ][line width=1.5]  (286.69,144.59) .. controls (292.33,135.65) and (299.96,127.91) .. (309.44,122.09) .. controls (320.78,115.12) and (333.38,111.76) .. (346.03,111.66) ;
\draw  [color={rgb, 255:red, 0; green, 0; blue, 0 }  ][line width=3] [line join = round][line cap = round] (203.5,286.03) .. controls (203.5,286.03) and (203.5,286.03) .. (203.5,286.03) ;
\draw  [color={rgb, 255:red, 0; green, 0; blue, 0 }  ][line width=3] [line join = round][line cap = round] (285.5,144.37) .. controls (285.5,144.37) and (285.5,144.37) .. (285.5,144.37) ;
\draw  [color={rgb, 255:red, 0; green, 0; blue, 0 }  ][line width=3] [line join = round][line cap = round] (346.17,110.7) .. controls (346.17,110.7) and (346.17,110.7) .. (346.17,110.7) ;

\draw (179,273) node [anchor=north west][inner sep=0.75pt]    {$k_{0}$};
\draw (346.5,206) node [anchor=north west][inner sep=0.75pt]    {$\mathcal{O}$};
\draw (245,134) node [anchor=north west][inner sep=0.75pt]    {$\gamma ( s)$};
\draw (352.5,90) node [anchor=north west][inner sep=0.75pt]    {$x=\gamma ( 0)$};
\draw (279,90) node [anchor=north west][inner sep=0.75pt]    {$x-\frac{p}{2C}$};

\end{tikzpicture}

Now, for every $(r,t)\in [0,s[\times [0,d(x)]$, we consider the family of curves $\Gamma(r,\cdot)$ defined as 
$$
\Gamma(r,t)=
\begin{cases}
\gamma (t),\quad &\text{ if }t\in [r,d(x)],
\\
\gamma (r)+ \dot{\gamma}(r)(t-r),\quad &\text{ if }t\in [0,r[.
\end{cases}
$$
%
We claim that $d(\Gamma (r,0))=d(\gamma (r)-r \dot{\gamma}(r))=d(x)$ and, consequently,
\begin{equation}\label{eq:dgr0}
E(d)(\Gamma (r,0))=d(\Gamma (r,0))=d(x)=E(d)(x),\qquad \forall r\in [0,s[.
\end{equation}
Indeed, $\Gamma (r,\cdot )$ is obtained by glueing together a part of a length minimizer $\gamma (\cdot )$ with  a straight-line segment, $t\mapsto \gamma (r)+ \dot{\gamma}(r)(t-r)$. More precisely, since $x\in S(k_0)\cap \partial \mathcal{O}$, we have also that $\Gamma (r,0)\in S(k_0)$ (for our purposes it suffices that $\Gamma (r,0)\in S(k_0)$ for $r$ near $0$, and this is true by continuity since $\Gamma (0,0)=x\in S(k_0)$).  
Then, a length minimizer with initial point at $\Gamma (r,0)$ can be decomposed into three parts 
\begin{enumerate}
\item a straight-line segment (tangent to the obstacle) joining the point $\Gamma (r,0)$ with $\gamma (r)\in \mathcal{O}$;
\item  a geodesic on the obstacle, $\gamma (\cdot )$, joining the tangency point $\gamma (r)$ with a point $\gamma (s)\in I(k_0)$ (and minimizing the distance on the obstacle between $\gamma (r)$ and  $I(k_0)$);
\item a straight-line segment joining $\gamma (s)$ with $k_0$. 
\end{enumerate}
Now, let us recall that the obstacle is a ball and that $\Gamma (r,0)$ is a point in the exterior of such a ball. 
Let us  consider the cone with vertex at $\Gamma (r,0)$ and tangent to the sphere; then the length of the parts of generators joining the vertex with the tangency points is a constant. Hence, $\Gamma (r,\cdot )$ is a length minimizer and  \eqref{eq:dgr0} follows. 

We observe that, for simmetry reasons, we may assume that for every $r\in [0,s[$ the curve $\Gamma (r,\cdot )$ lies on the plane of dimension $2$ containing the target $k_0$, the origin (which is the center of the ball $\mathcal{O}$) and the point $x$. 
   
Now, we claim that 
\begin{equation}\label{c}
\Gamma (r,0)=\gamma (r)- \dot{\gamma}(r)r\in \overline{ \R^n\setminus \left (\mathcal{O}\cup B_{\frac 1{2C}}\left ( x-\frac{p}{2C} \right )\right )}
\end{equation}
for every $r\in [0,s[$.  Indeed, recalling  \eqref{eq:strinc} we have that 
$$
y\in B_{\frac 1{2C}}\left ( x-\frac{p}{2C} \right )\cap (\overline{\R^n\setminus \mathcal{O}})\implies  d(y)< d(x) \,,
$$
while we have already noted that, for every $r\in [0,s[$, $\Gamma (r,0))$ belongs to the level set 
\begin{equation*}
\Lambda(x):=\big\{ y\in B_\delta (x)~|~E(d)(y)=E(d)(x)\big\}\,.
\end{equation*}
Incidentally, we note that the nonsmooth implicit function theorem (see, e.g., \cite[Section~7.1]{C}) ensures that $\Lambda(x)$ is a Lipschitz hypersurface near $x$, which is in fact differentiable at $x$
because $D^*E(d)(x)=\{p\}$.
%

Next, we define the curve 
$$
[0,s[\ni r\mapsto c(r):=\gamma (r)- \dot{\gamma}(r)r  , \quad \text{ with }c(0)=x , 
$$
and we note that   
\begin{itemize}
\item[(A)] $c(r)\in\Lambda(x)$, for every $r\in [0,s[$;
\item[(B)] $c(r)\in \overline{ \R^n\setminus \left (\mathcal{O}\cup B_{\frac 1{2C}}\left ( x-\frac{p}{2C} \right )\right )}$,   for every $r\in [0,s[$;
\item[(C)] the sphere $\partial B_{\frac 1{2C}}\left ( x-\frac{p}{2C} \right )$ is tangent to $\Lambda(x)$ at $x$.  
\end{itemize}
 Hence (A), (B) and (C) above imply that the curve $c(\cdot )$ is tangent to the sphere $\partial B_{\frac 1{2C}}\left ( x-\frac{p}{2C} \right )$ at $x$.

\tikzset{every picture/.style={line width=0.75pt}} 

\begin{tikzpicture}[x=0.75pt,y=0.75pt,yscale=-1,xscale=1]

\draw    (299,121.75) .. controls (339,91.75) and (397,73.75) .. (469,91.25) ;
\draw  [color={rgb, 255:red, 0; green, 0; blue, 0 }  ][line width=3] [line join = round][line cap = round] (298.9,120.8) .. controls (298.9,120.8) and (298.9,120.8) .. (298.9,120.8) ;
\draw  [color={rgb, 255:red, 0; green, 0; blue, 0 }  ][line width=3] [line join = round][line cap = round] (369.9,88.8) .. controls (369.9,88.8) and (369.9,88.8) .. (369.9,88.8) ;
\draw  [color={rgb, 255:red, 0; green, 0; blue, 0 }  ][line width=3] [line join = round][line cap = round] (180.9,240.8) .. controls (180.9,240.8) and (180.9,240.8) .. (180.9,240.8) ;
\draw  [color={rgb, 255:red, 0; green, 0; blue, 0 }  ][line width=3] [line join = round][line cap = round] (468.9,91.3) .. controls (468.9,91.3) and (468.9,91.3) .. (468.9,91.3) ;
\draw  [color={rgb, 255:red, 0; green, 0; blue, 0 }  ][line width=3] [line join = round][line cap = round] (464.4,70.3) .. controls (464.4,70.3) and (464.4,70.3) .. (464.4,70.3) ;
\draw  [dash pattern={on 0.84pt off 2.51pt}]  (299,121.75) -- (181,240.25) ;
\draw    (465.5,70.75) -- (370,88.75) ;
\draw    (440,83.25) -- (444,90) ;
\draw    (446,83.75) -- (450,90.5) ;
\draw    (439.5,71.25) -- (443.5,78) ;
\draw    (433.5,73.25) -- (437.5,80) ;

\draw (162,213.5) node [anchor=north west][inner sep=0.75pt]    {$k_{0}$};
\draw (360,96.5) node [anchor=north west][inner sep=0.75pt]    {$\gamma ( r)$};
\draw (301,124.75) node [anchor=north west][inner sep=0.75pt]    {$\gamma ( s)$};
\draw (455,96) node [anchor=north west][inner sep=0.75pt]    {$\gamma ( 0) =x$};
\draw (438.5,45.5) node [anchor=north west][inner sep=0.75pt]    {$\Gamma ( r,0) =c( r)$};
\draw (418,146.5) node [anchor=north west][inner sep=0.75pt]    {$|\gamma ( r) -c( r) |=r$};

\end{tikzpicture}

\vspace{1cm}

\tikzset{every picture/.style={line width=0.75pt}} 

\begin{tikzpicture}[x=0.75pt,y=0.75pt,yscale=-1,xscale=1]

\draw  [color={rgb, 255:red, 74; green, 144; blue, 226 }  ,draw opacity=1 ][fill={rgb, 255:red, 74; green, 144; blue, 226 }  ,fill opacity=1 ] (586.74,293.49) .. controls (586.77,292.16) and (586.79,290.82) .. (586.8,289.47) .. controls (587.04,195.29) and (510.86,118.74) .. (416.63,118.49) .. controls (322.4,118.25) and (245.82,194.4) .. (245.57,288.58) .. controls (245.57,290.69) and (245.6,292.8) .. (245.67,294.89) -- cycle ;
\draw [color={rgb, 255:red, 208; green, 2; blue, 27 }  ,draw opacity=1 ][line width=1.5]  [dash pattern={on 5.63pt off 4.5pt}]  (401.09,51.62) .. controls (409.65,62.1) and (423.6,86.09) .. (407.54,119.34) ;
\draw [shift={(399,49.2)}, rotate = 47.73] [color={rgb, 255:red, 208; green, 2; blue, 27 }  ,draw opacity=1 ][line width=1.5]    (14.21,-4.28) .. controls (9.04,-1.82) and (4.3,-0.39) .. (0,0) .. controls (4.3,0.39) and (9.04,1.82) .. (14.21,4.28)   ;
\draw  [dash pattern={on 0.84pt off 2.51pt}]  (317.6,115.99) -- (219,288.5) ;
\draw  [dash pattern={on 0.84pt off 2.51pt}]  (410.33,118.5) -- (317.6,115.99) ;
\draw  [color={rgb, 255:red, 0; green, 0; blue, 0 }  ][line width=3] [line join = round][line cap = round] (272.62,194.98) .. controls (272.62,194.98) and (272.62,194.98) .. (272.62,194.98) ;
\draw  [color={rgb, 255:red, 0; green, 0; blue, 0 }  ][line width=3] [line join = round][line cap = round] (408.76,118.17) .. controls (408.76,118.17) and (408.76,118.17) .. (408.76,118.17) ;
\draw  [color={rgb, 255:red, 0; green, 0; blue, 0 }  ][line width=3] [line join = round][line cap = round] (318.16,116.22) .. controls (318.16,116.22) and (318.16,116.22) .. (318.16,116.22) ;
\draw   (226.29,115.99) .. controls (226.29,64.91) and (267.17,23.5) .. (317.6,23.5) .. controls (368.03,23.5) and (408.92,64.91) .. (408.92,115.99) .. controls (408.92,167.07) and (368.03,208.48) .. (317.6,208.48) .. controls (267.17,208.48) and (226.29,167.07) .. (226.29,115.99) -- cycle ;
\draw [color={rgb, 255:red, 65; green, 117; blue, 5 }  ,draw opacity=1 ][line width=1.5]    (272.7,194.43) -- (215.67,293.33) ;
\draw  [draw opacity=0][line width=1.5]  (273.79,194.66) .. controls (286.55,174.26) and (303.79,156.59) .. (325.19,143.28) .. controls (350.74,127.39) and (379.09,119.66) .. (407.54,119.34) -- (419.37,298.54) -- cycle ; \draw  [color={rgb, 255:red, 65; green, 117; blue, 5 }  ,draw opacity=1 ][line width=1.5]  (273.79,194.66) .. controls (286.55,174.26) and (303.79,156.59) .. (325.19,143.28) .. controls (350.74,127.39) and (379.09,119.66) .. (407.54,119.34) ;
\draw  [color={rgb, 255:red, 0; green, 0; blue, 0 }  ][line width=3] [line join = round][line cap = round] (272.7,194.7) .. controls (272.7,194.7) and (272.7,194.7) .. (272.7,194.7) ;
\draw  [color={rgb, 255:red, 0; green, 0; blue, 0 }  ][line width=3] [line join = round][line cap = round] (408.89,118.15) .. controls (408.89,118.15) and (408.89,118.15) .. (408.89,118.15) ;
\draw [color={rgb, 255:red, 65; green, 117; blue, 5 }  ,draw opacity=1 ][line width=1.5]    (412.2,69.2) -- (307.9,156.2) ;
\draw [color={rgb, 255:red, 65; green, 117; blue, 5 }  ,draw opacity=1 ][line width=1.5]    (412.6,100.8) -- (346.6,132) ;
\draw  [color={rgb, 255:red, 0; green, 0; blue, 0 }  ][line width=3] [line join = round][line cap = round] (413.96,100.12) .. controls (413.96,100.12) and (413.96,100.12) .. (413.96,100.12) ;
\draw  [color={rgb, 255:red, 0; green, 0; blue, 0 }  ][line width=3] [line join = round][line cap = round] (411.26,69.82) .. controls (411.26,69.82) and (411.26,69.82) .. (411.26,69.82) ;
\draw  [color={rgb, 255:red, 0; green, 0; blue, 0 }  ][line width=3] [line join = round][line cap = round] (307.16,157.22) .. controls (307.16,157.22) and (307.16,157.22) .. (307.16,157.22) ;
\draw  [color={rgb, 255:red, 0; green, 0; blue, 0 }  ][line width=3] [line join = round][line cap = round] (346.06,132.52) .. controls (346.06,132.52) and (346.06,132.52) .. (346.06,132.52) ;

\draw (220.46,192.32) node [anchor=north west][inner sep=0.75pt]    {$\gamma ( s)$};
\draw (431.7,97.27) node [anchor=north west][inner sep=0.75pt]    {$x=\gamma ( 0)$};
\draw (275.26,76.8) node [anchor=north west][inner sep=0.75pt]    {$x-\frac{p}{2C}$};
\draw (414.5,44.6) node [anchor=north west][inner sep=0.75pt]    {$c( .)$};
\draw (311.5,158) node [anchor=north west][inner sep=0.75pt]    {$\gamma ( r)$};
\draw (348.7,132.2) node [anchor=north west][inner sep=0.75pt]    {$\gamma ( r')$};

\end{tikzpicture}

Furthermore, we have that 
\begin{equation}\label{eq:cg}
\dot{c}(r)=- \ddot{\gamma}(r)r\qquad \text{ and }\qquad \ddot{c}(r)=- \dddot{\gamma}(r)r-\ddot{\gamma}(r).
\end{equation}
We observe that, since the curve $\gamma$ lies on a two dimensional plane, the curve $c(\cdot )$ lies on such a plane too, say  
$$
\Pi =\{O+ \lambda (c(0)-O)+\mu (c(s)-O)\ : \ \lambda ,\mu \in\R\}
$$
where $O$ is the center of the ball $\mathcal{O}$.  
Notice that $r\mapsto c(r)=\Gamma (r,0)$ is a smooth curve. Hence, by a well-known formula and \eqref{eq:cg},  the curvature of $c(\cdot )$ is given by  
$$
k(r)=\frac{|\dot{c}(r)\times \ddot{c}(r)|}{|\dot{c}(r)|^3}=\frac{|  (  - \ddot{\gamma}(r)r )  \times (- \dddot{\gamma}(r)r-\ddot{\gamma}(r))|}{|- \ddot{\gamma}(r)r |^3}
=\frac{|\ddot{\gamma}(r)\times \dddot{\gamma}(r)|}{r|\ddot{\gamma}(r)|^3},
$$
for $r\in ]0,s[$.  (Here ``$\times$'' denotes the vector product in the plane.) 

Moreover,  we find that, since $\gamma (r)$ ($r\in [0,s[$) is a geodesic arc on the sphere $\partial\mathcal{O}$ (it is given by the intersection of the plane $\Pi$ with the ball $\mathcal{O}$),
\begin{equation}\label{eq:rR}
k(r)=\frac 1{Rr},\qquad  r\in ]0,s[, 
\end{equation}
here $R$ is the radius of the obstacle $\mathcal{O}$. 
Indeed, by using in the two plane a system of coordinates such that $\gamma (0)=(R,0)$ and identifying $\R^2$  with $\C$,  we have that $\gamma (r)=R e^{ir}$, $\ddot{\gamma}(r)=-Re^{ir}$ and $\dddot{\gamma}(r)=-iRe^{ir}$, hence we find that $|\ddot{\gamma}(r)\times \dddot{\gamma}(r)|=R^2$ and $r|\ddot{\gamma}(r)|^3=rR^3$ and \eqref{eq:rR} follows.

Finally, recalling that  the planar curve $c(\cdot )$ is tangent to the intersection of  $\partial B_{\frac 1{2C}}\left ( x-\frac{p}{2C} \right )$ with $\Pi$ at $x$, we deduce that $k(r)$ is bounded above by the curvature of the sphere of radius $\frac{1}{2C}$, which leads to a contradiction 
with \eqref{eq:rR}.
%
This completes our proof. 

\subsection{Proof of Proposition \ref{cm} (ii)} 

Let $x=-\frac{k_0}{|k_0|}$ and  take  $\gamma \in \Gamma^*[x]$. Notice that $\gamma$ consists of two parts: an arc of a maximum circle on the sphere and a straight-line segment joining the endpoint of the arc closer to $k_0$ with $k_0$. Then, recalling that $\gamma$ is of class $C^{1,1}$ we reduce  the analysis to a subspace of dimension two. 
Specifically, we assume that $n=2$, $k_0=-M (1,0)$ (for a suitable $M>1$) and $x=(1,0)$ without loss of generality. 
By Lemma \ref{l:vi}, we have that  $-\dot{\gamma}(0)\in D^*d(1,0)$.  
Using the notation of the proof of Proposition \ref{cm}, by \eqref{p} we have that\footnote{$s$ is given by \eqref{eq:s1}.}, for all $r\in[0,s[$,
$$
d(c(r))\le d(1,0)+\langle -\dot{\gamma}(0), c(r)-(1,0)\rangle +C | c(r)-(1,0)|^{1+\alpha }. 
$$
%
Then, recalling that $d(c(r))= d(1,0)$, by the definition of $c(r)$ we find 
$$
\langle \dot{\gamma}(0), \gamma (r)-(1,0)-r\dot{\gamma}(r)\rangle \le C r^{1+\alpha} \left |   \frac{\gamma (r)-(1,0)}r -\dot{\gamma}(r)\right |^{1+\alpha}.
$$
Now, since $\gamma (r)=(\cos r,\sin r)$, we deduce that 
$$
\frac 1{r^\alpha} \left ( \frac{\sin r}r -\cos r\right ) \le C \left | \left ( \frac{\cos r -1}r+\sin r , \frac{\sin r} r -\cos r \right )\right |^{1+\alpha }
. 
$$
Then, up to higher order terms, we find that 
$$
\frac 13 r^{2-\alpha}\lesssim r^{1+\alpha }
$$
and taking the limit as $r\to 0^+$, we conclude that $1-2\alpha \geq 0$. This completes our proof. 
  
\subsection{Proof of Theorem \ref{t3} (i)}
 
 Let $x_0\in \Sigma (d) \cap [\co \mathcal{O}\setminus  \mathcal{O}]$. 
Then $B_{\delta} (x_0)\cap \mathcal{O}=\emptyset$ for  some $\delta >0$ and, by Theorem \ref{t:i}, we have that $d\in SC^1 (B_\delta (x_0))$. 
Recall that $ D^*d(x_0)\subset \partial D^+d(x_0)$ (see \cite[Proposition~4.4]{CiSo}). We claim that 
\begin{equation}\label{eq:ipotesi}
\partial D^+d(x_0)\setminus D^*d(x_0)\not=\emptyset .
\end{equation}
Indeed, let us assume by contradiction that $ \partial D^+d(x_0)= D^*d(x_0)$. Since $D^*d(x_0)\subset\partial B_1(0)$ and $D^+d(x_0)=\co D^*d(x_0)$, we have that  $D^*d(x_0)=\partial B_1(0)$. Then,  denoting by  
$\pi (x_0)$  the (Euclidean) projection of $x_0$ on $\partial \mathcal{O}$,  we find that   
$$
\nu (\pi (x_0))=\frac{x_0-\pi (x_0)}{| x_0-\pi (x_0)|}\in D^*d(x_0).
$$
Now,  Lemma \ref{l:vi} ensures the existence of a length minimizer $\gamma$ with
$$\gamma (t)=x_0 -t \nu (\pi (x_0) )\quad\forall t\in[0,|x_0-\pi (x_0)|]\,.$$
Then, on the one hand, $\gamma$ should be tangent to $\mathcal{O}$ at $\pi (x_0)$ owing to Remark~\ref{r:boundaryreg}. On the other hand, $\dot{\gamma}(t)=  -\nu (\pi (x_0) )$
for all $t\in[0,|x_0-\pi (x_0)|]$. This contradiction proves \eqref{eq:ipotesi}. 

Hence,  by Theorem \ref{t:acprop},  there exists a Lipschitz arc
$$
[0,\sigma [\ni t\mapsto x(t)\in \Sigma (d),
$$
 such that $x(0)=x_0$ and $x(t)\not= x_0$, for every $t\in ]0,\sigma [$. This completes our proof of Theorem \ref{t3}(i).

\subsection{Proof of Theorem~\ref{t3}~(ii)} 
We begin with the following 
\begin{lemma}\label{l:critical} 
Under Assumption (O), let $k_0\in \R^n\setminus \mathcal{O}$ and let $d$ be given by \eqref{d}. Then, $d$ has no critical points in $\R^n \setminus \co \mathcal{O}$, i.e. 
\begin{equation}\label{eq:0nid} 
0\notin  D^+d(x),\qquad \forall x\in \R^n \setminus \co \mathcal{O}\, .
\end{equation}
\end{lemma}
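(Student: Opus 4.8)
The plan is to argue by contradiction: suppose $0 \in D^+d(x_0)$ for some $x_0 \in \R^n \setminus \co\mathcal{O}$. Since $x_0 \notin \co\mathcal{O}$, there is a ball $B_\delta(x_0)$ disjoint from $\co\mathcal{O}$, so by Theorem~\ref{t:i} we have $d \in SC^1_{loc}(B_\delta(x_0))$, hence $D^+d(x_0) = \co D^*d(x_0)$ and $D^*d(x_0) \neq \emptyset$. The hypothesis $0 \in D^+d(x_0)$ then says $0$ is a convex combination of unit reachable gradients (recall $|p|=1$ for every $p \in D^*d(x_0)$ by the eikonal equation). By Lemma~\ref{l:vi}, each $p \in D^*d(x_0)$ is the initial velocity (up to sign) of a length minimizer $\gamma \in \Gamma^*[x_0]$, i.e.\ $\dot\gamma(0) = -p$, and these minimizers consist of straight segments and geodesic arcs on $\partial\mathcal{O}$.

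The key geometric idea is a separating-hyperplane argument. Since $x_0 \notin \co\mathcal{O}$, choose a hyperplane strictly separating $x_0$ from $\co\mathcal{O}$, with unit normal $e$ pointing from $\co\mathcal{O}$ towards $x_0$; thus $\langle k_0 - x_0, e\rangle \le 0$ and $\langle y - x_0, e\rangle \le 0$ for all $y \in \mathcal{O}$ (after translating, one may also arrange $\langle y - x_0, e \rangle \le -c < 0$ on $\mathcal{O}$, though we only need $\le 0$, or better: pick the half-space $\{z : \langle z - x_0, e\rangle \le 0\}$ containing $\co\mathcal{O} \cup \{k_0\}$). Now take any length minimizer $\gamma$ from $x_0$ to $k_0$. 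Since $\gamma$ starts at $x_0$ and must reach $k_0$ and possibly touch $\mathcal{O}$, all of which lie in the closed half-space on the far side of (or on) the separating hyperplane through $x_0$, the curve $\gamma$ is forced to move into that half-space. More precisely, I would show $\langle -\dot\gamma(0), e\rangle > 0$, i.e.\ $\langle p, e\rangle > 0$ for every $p \in D^*d(x_0)$: the first segment of $\gamma$ is a straight line from $x_0$ either directly to $k_0$ or to a tangency point on $\partial\mathcal{O}$, and in either case that first target point $z$ satisfies $\langle z - x_0, e\rangle \le 0$ with equality only in a degenerate configuration that I will rule out using strict separation (since $x_0 \notin \co\mathcal{O}$, the hyperplane can be taken so that $\mathcal{O} \cup \{k_0\}$ lies in the open half-space $\langle \cdot - x_0, e\rangle < 0$). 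Hence $\langle \dot\gamma(0), e\rangle = \langle (z - x_0)/|z - x_0|, e\rangle < 0$, so $\langle p, e\rangle = -\langle \dot\gamma(0), e \rangle > 0$.

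Once $\langle p, e\rangle > 0$ for all $p \in D^*d(x_0)$, it follows that $\langle q, e\rangle > 0$ for every $q \in \co D^*d(x_0) = D^+d(x_0)$, since a positive lower bound on a linear functional over a compact set passes to its convex hull. In particular $0 \notin D^+d(x_0)$, contradicting our assumption. This proves \eqref{eq:0nid}.

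The main obstacle I anticipate is handling the degenerate/boundary cases in the separation step cleanly: one must be sure the separating hyperplane can be chosen so that the entire relevant data ($\mathcal{O}$, and therefore any tangency point of a minimizer, as well as $k_0$) lies strictly on one side, so that the first segment of any minimizer genuinely has a strictly negative inner product with $e$. This uses that $x_0 \notin \co\mathcal{O}$ (a closed convex set) so strict separation from the point $x_0$ is available, together with the fact that $k_0 \in \R^n \setminus \mathcal{O} \subset$ —careful: $k_0$ need not be in $\co\mathcal{O}$ either, but a minimizer from $x_0$ reaches $k_0$, and if $k_0$ happened to be on the wrong side one would instead use that the minimizer must first leave $x_0$ heading toward the part of its trajectory (tangency with $\mathcal{O}$, or $k_0$ itself) — so the cleanest route is: the straight initial segment of $\gamma$ aims at a point $z$ which is either $k_0$ or a point of $\partial\mathcal{O}$; if $z \in \partial\mathcal{O}$ we are done by strict separation of $x_0$ from $\co\mathcal{O}$; if the minimizer is the straight segment $[x_0,k_0]$ never touching $\mathcal{O}$, then $d(x_0) = |x_0 - k_0|$ and differentiability of the Euclidean distance gives $Dd(x_0) = (x_0-k_0)/|x_0-k_0| \ne 0$ directly, so again $0 \notin D^+d(x_0)$. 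Assembling these cases into one argument is the delicate bookkeeping, but each piece is elementary given Lemma~\ref{l:vi} and Theorem~\ref{t:i}.
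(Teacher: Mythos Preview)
Your argument is correct and, up to the final step, mirrors the paper's proof: both dispose of $x_0 \in I(k_0)$ as trivial (your phrase ``differentiability of the Euclidean distance'' should be expanded to the observation that $d(y) \geq |y-k_0|$ with equality at $x_0$ places $(x_0-k_0)/|x_0-k_0|$ in $D^-d(x_0)$, so semiconcavity of $d$ near $x_0$ forces differentiability with nonzero gradient), and for $x_0 \in S(k_0)$ both use Lemma~\ref{l:vi} to write each $p_j \in D^*d(x_0)$ as $(x_0 - x_j)/|x_0 - x_j|$ with $x_j \in \partial\mathcal{O}$ the first tangency point of a minimizer. The difference is only in how the contradiction is reached: the paper solves $0 = \sum_j \lambda_j p_j$ algebraically to exhibit $x_0 = \sum_j t_j x_j \in \co\mathcal{O}$ directly, with $t_j = (\lambda_j/|x_j - x_0|)\big/\sum_\ell \lambda_\ell/|x_\ell - x_0|$, whereas you strictly separate $x_0$ from $\co\mathcal{O}$ by a hyperplane with normal $e$ and note $\langle p_j, e\rangle > 0$ for each $j$, whence $0 \notin \co D^*d(x_0)$. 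The two closings are dual and equally short. Your concern about where $k_0$ sits relative to the separating hyperplane evaporates once the case split is in place: in $S(k_0)$ every minimizer's first segment ends on $\partial\mathcal{O} \subset \co\mathcal{O}$, so only separation of $x_0$ from $\co\mathcal{O}$ is needed and the position of $k_0$ is irrelevant.
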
 
Consequently, in the presence of a convex obstacle, $d$ has no critical points outside $\mathcal{O}$. 
\begin{proof}[Proof of Lemma \ref{l:critical}]
We begin with observing that \eqref{eq:0nid}  is trivial if $x\in I(k_0)$. So, we restrict to the case of $x\in S(k_0)$ that we analyse arguing by contradiction. Thus,  suppose that $0\in  D^+d(x)$ for some $x\in \R^n \setminus \co \mathcal{O}$. Then,  by Caratheodory's Theorem, there exist $\lambda_j\in [0,1]$ with $\sum_{j=1}^{n+1} \lambda_j =1$  and $p_j\in D^*d(x)$, $j=1,\ldots , n+1$, 
such that 
\begin{equation*}
0=\sum_{j=1}^{n+1} \lambda_j p_j.  
\end{equation*}
Furthermore, by Lemma~\ref{l:vi}, there exist arcs $\gamma_j\in\Gamma^*[x]$ touching the obstacle at points $x_j\in \partial \mathcal{O}$ such that 
$$
x_j=x-  |x_j-x|   p_j,\qquad j=1,\ldots , n+1 , 
$$
that is,
\begin{equation}\label{eq:conve} 
p_j= \frac{ x-x_j} {|x_j-x|},\qquad j=1,\ldots , n+1. 
\end{equation}
Hence, multiplying both  sides of \eqref{eq:conve} by $\lambda_j$ and taking the sum over $j=1,\ldots , n+1$, we deduce the contradiction  
$$
\co \mathcal{O}\not\ni x=\sum_{j=1}^{n+1} t_j x_j\in\co \mathcal{O} 
$$
with 
$$
t_j=\frac{\lambda_j/|x_j-x|}{ \sum_{\ell =1}^{n+1} \lambda_\ell /|x_\ell -x| },\quad  j=1,\ldots , n+1, 
$$
and \eqref{eq:0nid} follows. This completes our proof of Lemma \ref{l:critical}. 
\end{proof}

Let $x_0\in \Sigma (d)\cap (\R^n \setminus (\co \mathcal{O}\cup \{ k_0\} ))$ and consider the differential inclusion 
\begin{equation}\label{eq:gf} 
\begin{cases}
\dot{x}(t)\in D^+d(x(t)),\qquad \text{ for a.e. }t\geq 0,
\\
x(0)=x_0. 
\end{cases} 
\end{equation}
By \cite{AC2} (see also \cite{AC1}), there exists $\sigma >0$ and a unique Lipschitz continuous solution of \eqref{eq:gf}, such that 
$$
x(t)\in \Sigma (d),\qquad \forall t\in [0,\sigma [. 
$$
Furthermore, since $t\mapsto d(x(t))$ is a Lipschitz continuous function, for $\sigma>t_2>t_1\geq 0$ we have that   
\begin{equation}\label{eq:lip}
d(x(t_2))=d(x(t_1))+\int_{t_1}^{t_2} \frac{d}{ds}d(x(s))\, ds\,.
\end{equation}
Then, by the semiconcavity of $d$, for a.e. $s\in [t_1,t_2]$, we have that 
\begin{equation}\label{eq:semico}
\frac{d}{ds}d(x(s))=|p(s)|^2, 
\end{equation}
for a suitable $p(s)\in D^+d( x(s))$, see \cite[Theorem~1]{ACKS}. 
Hence, by \eqref{eq:lip}, \eqref{eq:semico}, and \eqref{eq:0nid} we deduce that 
\begin{equation}\label{eq:dtd0}
d(x(t_2))=d(x(t_1))+\int_{t_1}^{t_2} |p(s)|^2\, ds  > d(x (t_1)), 
\end{equation}
 i.e.
$
[0,\sigma [\ni t\mapsto d(x(t))
$
is an increasing function. 

Now, by the results in \cite{A5}, we can take $\sigma =+\infty$ provided that we can guarantee that $x(t)\notin \mathcal{O}$ for every $t\geq 0$.   
In fact, we will show that $x(t)\notin \co\mathcal{O}$ for every $t\geq 0$. 
Aiming at this, we observe that   $\forall t\in [0,\sigma [$
\begin{equation}\label{eq:xpiu}
\dot{x}^+(t):=\lim_{h\to 0^+} \frac{x(t+h)-x(t)}h= p(t)\in D^+d(x (t)) 
\end{equation}
where $p (t)$ is the minimizer of the function 
$$
D^+d(x (t))\ni p \mapsto |p|^2, 
$$
see  \cite[Corollary 3.4]{CY} for the proof of these facts. 
Let  $\text{ dist }( x(t), \co\mathcal{O})$  be the Euclidean distance of $x(t)$ from the set $\co\mathcal{O}$ and let $\pi (x (t))$ be the Euclidean projection of $x(t)$ onto  $\co \mathcal O$,
 i.e.,
 $$
 |x(t)-\pi(x(t))|=\min_{x\in \co\mathcal{O}}|x(t)-x|\,.
 $$
 Then
\begin{equation}\label{eq:nup}
\nu (\pi (x(t)))=\frac{x(t)-\pi(x(t))}{| x(t)-\pi(x(t))|}. 
\end{equation}
By the chain rule\footnote{We recall that the distance function from a convex set is differentiable in the complement of such a set.} and \eqref{eq:xpiu}, we find that 
\begin{multline}\label{eq:derdist}
\lim_{h\to 0^+} \frac{\text{ dist }( x(t+h),\co \mathcal{O})-\text{ dist }( x (t),\co  \mathcal{O}) } h 
\\
=\langle \nu (\pi (x (t))), p(t)\rangle .
\end{multline}
Now, let us show that the right hand side in \eqref{eq:derdist} is nonnegative. 
Recalling that $x(t)\in S(k_0)$ (since $x(t)\in \Sigma (d)$), we find  $x_1,\ldots ,x_{n+1}\in \co\mathcal{O}$ and $\lambda_j\in [0,1]$, with $\sum_{j=1}^{n+1}\lambda_j=1$, such that 
\begin{equation}\label{eq:pco}
p(t)=\sum_{j=1}^{n+1} 	\lambda_j \frac{x(t)-x_j}{|x(t)-x_j|}.     
\end{equation}
Then, since $\co \mathcal O$ is convex, we have that 
\begin{eqnarray*}
0&\geq& \langle x(t)-\pi (x(t)), x_j-\pi (x(t))\rangle 
\\
& =& \langle x(t)-\pi (x(t)), x_j-x(t)\rangle +| x(t)-\pi (x(t))|^2.  
\end{eqnarray*}
So, we find 
$$
 \langle x(t)-\pi (x(t)), x_j- x(t)\rangle \le 0 
$$
and, by \eqref{eq:pco} and \eqref{eq:nup}, we deduce  that 
$$
\langle \nu (\pi (x(t))), p(t)\rangle \geq 0.  
$$
In other words,  
$$
\lim_{h\to 0^+} \frac{\text{ dist }( x(t+h),\co \mathcal{O})-\text{ dist }( x (t),\co  \mathcal{O}) } h 
 \geq 0 
$$
for every $t\in [0,\sigma [$. Hence, since  $t\mapsto \text{ dist }( x (t),\co  \mathcal{O})$ is a Lipschitz continuous function, we conclude that, for $\sigma >t_2>t_1\geq 0$ 
\begin{multline*}
\text{ dist }( x (t_2),\co  \mathcal{O})=\text{ dist }( x (t_1),\co  \mathcal{O})+\int_{t_1}^{t_2} \frac d{dt} \text{ dist }( x (t),\co  \mathcal{O})\, dt 
\\
\geq \text{ dist }( x (t_1),\co  \mathcal{O}), 
\end{multline*}
i.e., $[0,\sigma [\ni t\mapsto \text{ dist }( x (t),\co  \mathcal{O})$  is 
a nondecreasing function. As a consequence, $\sigma =+\infty$.

In order to complete the proof, it remains to show that $\{ x(t)\ | \ t\geq 0\}$ is an unbounded set. 
We argue by contradiction: let us assume that 
$$
s:=\sup \{ d(x(t))\ | \ t\geq 0\} <+\infty .
$$
Then, there exists $t_j\to +\infty $ and $\bar{x}$ such that $x(t_j)\to \bar{x}$ and $s=d(\bar{x})$, as $j\to +\infty $. Hence, taking in 
\eqref{eq:dtd0} $t_2=t_j$ and $t_1=0$, we find that 
$$
d(x(t_j))=d(x_0)+\int_0^{t_j} |p(t)|^2\, dt \geq d(x_0) +t_j \inf_{t\in [0,t_j]}  | p(t)|^2
$$
and, sending $j\to \infty$, we deduce that 
$$
\limsup_{j\to \infty }\,   t_j  \inf_{t\in [0,t_j]}  | p(t)|^2     <+\infty . 
$$
So, necessarily, we have that  $\lim_{j\to \infty }  \inf_{t\in [0,t_j]}  | p(t)|^2  =0$. Now, by \eqref{eq:0nid}, $|p(t)|>0$ for every $t\geq 0$, and we would find a sequence $[0,t_j]\ni t^*_j\to \infty$ such that $p(t^*_j)\to 0$ as $j\to \infty $. Then, from the fact that the set valued map $x\mapsto D^+d(x)$ is upper semicontinuous and takes compact values, we find the contradiction  
$0\in D^+d(\bar{x})$ (we recall that   $\bar{x}\notin \co \mathcal{O}$ and, by \eqref{eq:0nid}, $0\notin  D^+d(\bar{x})$). It follows that $\{ x(t)\ | \ t\geq 0\}$ is an unbounded set, and this completes our proof of Theorem \ref{t3}(ii).

 \subsection{Proof of Theorem \ref{t2}}

Theorem~\ref{t2} is proved in two steps: first, appealing to the abstract propagation result  in \cite{ABC} (see also \cite{A1}), we show that a singularity on the boundary of a convex obstacle lies on a continuum of singular points which immediately enters the complement of $\mathcal O$. 
Then,  any point of such a continuum is the starting point of a singular Lipschitz curve owing to Theorem~\ref{t3}. So, we consider the limit curve, as the initial point converges to the singular point on the boundary of the obstacle, and we show that this curve satisfies all the requirements in Theorem \ref{t2}. 

\smallskip
Let $x_0\in \partial \mathcal{O}\cap \Sigma (d)$. Then  $x_0\in S(k_0)$ and $D^*d(x_0)$ has at least two elements. 
So, by Theorem~\ref{t:rb} above and Theorem~\ref{t:estensione}, there exist  $\delta >0$ and a function $E(d)\in SC^{\frac 12} (B_\delta (x))$ such that 
\begin{enumerate} 
\item $E(d)(y)=d(y)$ for every $y\in B_\delta (x)\cap \overline{\R^n\setminus \mathcal{O}}$;
\item $D^*E(d)(y)=D^*d(y)$ for every $y\in B_\delta (x)\cap \partial\mathcal{O}$.
\end{enumerate}
Moreover,  
\begin{equation}\label{eq:sc}
D^+E(d)(x_0)=\co D^*E(d)(x_0)\quad (=\co D^*d(x_0)). 
\end{equation}
Now, we need an orthogonality property of reachable gradients at $x$ with respect to the outward unit normal $\nu (x)$ to $\mathcal{O}$ at $x$.
\begin{lemma}\label{l:np}
Under Assumption (O), let $\mathcal{O}$ be a convex set. Then, for all $x\in \partial \mathcal{O}\cap S(k_0)$, we have that
\begin{equation*}
\langle p, \nu (x)\rangle = 0\,,\quad\forall \, p\in D^*d(x)\,.
\end{equation*}
\end{lemma}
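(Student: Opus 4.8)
The plan is to combine two facts: first, by Lemma~\ref{l:vi}, every $p\in D^*d(x)$ is $-\dot\gamma(0)$ for some length minimizer $\gamma\in\Gamma^*[x]$ leaving $x\in\partial\mathcal O$; second, since $\mathcal O$ is convex, such a minimizer cannot enter $\mathcal O$, which forces $\dot\gamma(0)$ to point into the complement, i.e. $\langle\dot\gamma(0),\nu(x)\rangle\ge 0$, hence $\langle p,\nu(x)\rangle\le 0$ for all reachable gradients. To upgrade this inequality to an equality I would argue by contradiction: suppose $\langle p_0,\nu(x)\rangle<0$ for some $p_0\in D^*d(x)$. Take the corresponding minimizer $\gamma$ with $\dot\gamma(0)=-p_0$, so $\langle\dot\gamma(0),\nu(x)\rangle>0$; by the $C^{1,1}$ regularity of minimizers (Remark~\ref{r:geo}) the arc moves strictly away from $\partial\mathcal O$ for small $t>0$ and, being a straight segment until it next touches $\mathcal O$ (Remark~\ref{r:boundaryreg}), it actually never returns to $\mathcal O$ by convexity.

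The key step is then to produce a \emph{second} minimizer from $x$ contradicting $x_0\in S(k_0)$ or contradicting that $D^*d(x)$ is reached only by admissible velocities. Concretely: because $\langle\dot\gamma(0),\nu(x)\rangle>0$, one can perturb the starting point slightly along $-\nu(x)$ into the exterior and still connect to $k_0$ by a shorter path (the segment can be "straightened" past $x$), showing that $d$ near $x$ along the inward-normal direction decreases faster than $|p_0|=1$ allows, i.e. $x$ would not be the foot of a minimizer at all, or equivalently that $\langle p,\nu(x)\rangle<0$ is incompatible with the gradient bound $|p|=1$ coming from the eikonal equation together with the constraint that $x\in\partial\mathcal O$. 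More precisely, I would note that for $y=x-t\nu(x)\in\overline{\R^n\setminus\mathcal O}$ ($t>0$ small) semiconcavity of $E(d)$ gives $d(y)\le d(x)+\langle p_0,y-x\rangle+Ct^2 = d(x)-t\langle p_0,\nu(x)\rangle+Ct^2$, while any path from $y$ to $k_0$ can be routed near $\gamma$ to give $d(y)\ge d(x)-t+o(t)$ (since $y$ lies roughly at distance $t$ "before" $x$ along $-\nu$, and $\gamma$ leaves transversally). Combining, $-t+o(t)\le -t\langle p_0,\nu(x)\rangle+Ct^2$, i.e. $\langle p_0,\nu(x)\rangle\le 1+o(1)$ — not yet a contradiction; the sharper estimate must exploit that $\gamma$ is a segment and that moving to $y$ lets one take a \emph{strictly shorter} broken path, giving $d(y)\le d(x)-t\langle\dot\gamma(0),\nu(x)\rangle+o(t)$ with $\langle\dot\gamma(0),\nu(x)\rangle$ possibly $>1$ if the segment is "reflected," contradicting $|Dd|=1$.

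The cleanest route, which I would adopt, avoids delicate estimates: since $\mathcal O$ is convex and $x\in\partial\mathcal O\cap S(k_0)$, take any $p\in D^*d(x)$ and the minimizer $\gamma$ with $\dot\gamma(0)=-p$. If $\langle p,\nu(x)\rangle<0$, then $\gamma$ enters the exterior transversally; but then the point $x$ is an \emph{interior} point (relative to the minimizer) of a longer minimizer obtained by extending $\gamma$ backwards along the line $\{x-s\dot\gamma(0):s\ge0\}$, which stays in $\overline{\R^n\setminus\mathcal O}$ for small $s>0$ by transversality and convexity. This extended curve is still a shortest path (its initial segment is a free straight line not touching $\mathcal O$), so by \eqref{eq:rlt} in Lemma~\ref{l:vi}, $d$ is differentiable at $x$ with $D^*d(x)=\{p\}$ a singleton — contradicting $x_0\in\Sigma(d)$, hence $D^*d(x)$ having two elements. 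This handles the strict inequality; since we already have $\langle p,\nu(x)\rangle\le0$ for all $p$ (minimizer cannot enter the convex obstacle), the only remaining possibility is $\langle p,\nu(x)\rangle=0$ for every $p\in D^*d(x)$, as claimed.

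The main obstacle I anticipate is justifying that the backward extension of a minimizer that leaves $\partial\mathcal O$ transversally is itself a minimizer (equivalently, that $x$ is not a conjugate/cut point in this direction): this uses that the extended arc is an admissible subunit curve of the right length and that $d(x-s\dot\gamma(0)) = d(x)+s$ for small $s$, which follows from the eikonal equation plus the fact that the extension is a free straight segment disjoint from $\mathcal O$. Making this rigorous near the boundary may require invoking the extension $E(d)$ of Theorem~\ref{t:estensione} and the characterization of $\Sigma(d)\cap\partial\mathcal O$ in terms of non-differentiability of every local extension.
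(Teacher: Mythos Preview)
Your treatment of the inequality $\langle p,\nu(x)\rangle\le 0$ is correct and identical to the paper's. For the strict inequality, however, you overshoot the mark and then miss the target.

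In your first paragraph you already reach the decisive observation: if $\langle p,\nu(x)\rangle<0$, the minimizer $\gamma$ with $\dot\gamma(0)=-p$ is a straight segment leaving $\partial\mathcal O$ transversally, and by convexity of $\mathcal O$ it can never return to $\mathcal O$ (a segment with both endpoints in $\mathcal O$ would lie entirely in $\mathcal O$). You stop there, but the conclusion is one line away: a length minimizer that never touches the obstacle is the free Euclidean geodesic, i.e.\ the straight segment from $x$ to $k_0$, so $d(x)=|x-k_0|$ and $x\in I(k_0)$, contradicting the hypothesis $x\in S(k_0)$. This is exactly the paper's argument.

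Your ``cleanest route'' via backward extension contains two errors. First, a sign mistake: with $\dot\gamma(0)=-p$ and $\langle p,\nu(x)\rangle<0$, the backward direction $-\dot\gamma(0)=p$ has $\langle p,\nu(x)\rangle<0$, so $x-s\dot\gamma(0)=x+sp$ moves \emph{into} $\mathcal O$ for small $s>0$, not into $\overline{\R^n\setminus\mathcal O}$ as you claim. Second, and more seriously, even if that extension were admissible, the contradiction you derive is with $x\in\Sigma(d)$, whereas the lemma only assumes $x\in\partial\mathcal O\cap S(k_0)$; points of $S(k_0)$ need not be singular. So the argument, as written, would prove a strictly weaker statement than Lemma~\ref{l:np}.
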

\begin{proof}[Proof of Lemma \ref{l:np}]

Let us first show that, 
for every $x\in \partial \mathcal{O}\cap S(k_0)$, %
\begin{equation}\label{eq:diseq}
\langle p, \nu (x)\rangle \leq  0,\qquad \forall p\in D^*d(x). 
\end{equation}
 Fix  $x\in \partial \mathcal{O}\cap S(k_0)$ and let $p\in D^*d(x)$. By Lemma \ref{l:vi}, there exists $\gamma \in \Gamma^*[x]$ such that $\dot{\gamma}(0)=-p$. Then, $\langle \nu (x),\dot{\gamma}(0)\rangle \geq 0$ because  $\gamma (t)\in \overline{\R^n \setminus \mathcal{O}}$ for every $t\in [0,d(\gamma (0)]$, and \eqref{eq:diseq} follows. 
 
Now, let us show that equality holds in \eqref{eq:diseq}. We argue by contradiction assuming that
$$
\langle q, \nu (x)\rangle < 0,\qquad \text{ for a suitable } q\in D^*d(x). 
$$
Then, by Lemma \ref{l:vi}, we find that the curve $\gamma (t)=x-qt$ is a length minimizer up to some time $t_*>0$ such that 
\begin{equation}\label{eq:egua}
\begin{cases}
\gamma (t)\notin \mathcal{O},\qquad \forall t\in ]0,t_*[, 
\\
\gamma (t_*)\in\mathcal{O}. 
\end{cases}
\end{equation}
Then, by the convexity of $\mathcal{O}$ and the fact that $\gamma (t)\in\mathcal{O}$ for $t=0,t_*$, we obtain that $\gamma (t)\in \mathcal{O}$ for every $t\in ]0,t_*[$ (in contrast with \eqref{eq:egua}). Hence, we find that $\gamma (t)\notin \mathcal{O}$ for every $t>0$ (i.e. $x\in I(k_0)$), contradicting the fact that $x\in S(k_0)$. We deduce that $\langle p, \nu (x)\rangle = 0$ for every $p\in D^*d(x)$. 
This completes the proof of Lemma \ref{l:np}. 
\end{proof}
\begin{remark}\label{r:dsns}
We point out that, in the proof of \eqref{eq:diseq}, we have made no use of the convexity of $\mathcal{O}$. 
\end{remark}

We now continue  the proof of Theorem~\ref{t2}.
Recalling that  $\mathcal{O}$ is convex and $x_0\in \partial\mathcal{O}\cap S(k_0)$, by Lemma \ref{l:np} and \eqref{eq:sc} we deduce that 
\begin{equation}\label{eq:gp}
\forall p\in D^+E(d)(x_0):\, \langle \nu (x_0),p\rangle = 0. 
\end{equation}
%
 Let $p_0$ be the unique point in $D^+E(d)(x_0)$ such that 
$$
\min_{ p\in D^+E(d)(x_0)}|p|^2=|p_0|^2.
$$
Observe that $p_0\not\in D^*E(d)(x_0)$ and $-\nu (x_0)$ belongs to the normal cone to $D^+E(d)(x_0)$ at $p_0$, $N_{D^+E(d)(x_0)}(p_0)$.  
Then, by \cite[Theorem~1.5 ]{ABC} (see also \cite[Theorem~4.2 ]{A1}), we deduce that there exist a positive number $\sigma$ and a map, $[0,\sigma [\ni s\to x(s)\in \Sigma (d)$, continuous at $s=0$, such that $x(0)=x_0$,  $x(s)\not=x_0$, for every $s\in [0,\sigma [$, and 
\begin{equation}\label{eq:dirproco}
x(s)=x_0+s\nu (x_0) +o(s),\qquad \text{ as }s\to 0^+.
\end{equation}
This completes the first step of the proof. 

\smallskip
Let $s_j\in ]0,\sigma [$ be a sequence converging to $0$ as $j\to\infty$. 
Then, we may apply Theorem~\ref{t3} (ii) to each $x(s_j)\in \Sigma (d)\setminus \mathcal{O}$. Then, we find a sequence of Lipschitz arcs $x_j:[0,\infty [\longrightarrow \Sigma (d)$, with velocities bounded by 1,  such that $x_j(0)=x(s_j)$ and $d(x_j(t_1))<d(x_j(t_2))$,  for every $0\le t_1<t_2$. 
Therefore, for every $T>0$ there exists $R>0$ such that, for every $j\in \N$,
\begin{enumerate} 
\item $x_j(t)\in B_R(x_0)$, for every $t \in [0,T]$,
\item $|x_j(t)-x_j(s)|\le |t-s|$, for every $t,s\in [0,T]$.
\end{enumerate}
Then, in view of the compactness of trajectories to a differential inclusion (see, e.g., \cite[Theorem~3.1.7]{C}), we  deduce that there exists $x(t):=\lim_{j\to \infty }x_j(t)$, uniformly on  $[0,T]$, with
$$
x(0)=x_0\quad\mbox{and}\quad \dot{x}(t)\in D^+d(x(t)),\quad \text{ for a.e. }t\in [ 0,T]. 
$$
By construction, $x(t)\in \overline{\Sigma(d)}$ for every $t\in [0,T]$. 
Let us assume by contradiction that $x(t_0)\notin \Sigma (d)$, for a suitable $t_0\in [0,T]$. Then, by \cite{A5}, we deduce that $x(t)\notin \Sigma (d)$ and 
$\dot x(t_0-t)=-Dd\big(x(t_0-t)\big)$ for all $t\in ]0,t_0[$. Consequently,  for all $t\in ]0,t_0[$
\begin{equation}
\label{eq:length-min}
d\big(x(t_0-t)\big)-d\big(x(t_0)\big)=\int_0^t\frac{d}{ds}d\big(x(t_0-s)\big)ds=-t.
\end{equation}
Let $\gamma_0:[0,d(x_0)]\to \R^n$ be a length minimizer with $\gamma_0(0)=x_0$. By \eqref{eq:length-min}
$$
\gamma (t):=
\begin{cases}
x(t_0-t),\qquad &\text{ for }t\in [0,t_0],
\\
\gamma_0(t-t_0),\qquad &\text{ for }t\in ]t_0, t_0+d(x_0)].
\end{cases}
$$
 is in turn a length minimizer  on $[0, t_0+d(x_0)]$. On the other hand,  $\gamma (t_0)\in \Sigma (d)$,  in contrast with the differentiability of $d$ along length minimizers (see \eqref{eq:rlt} in Lemma \ref{l:vi}). Therefore,  $x(t)\in \Sigma (d)$ for every $t\in [0,T]$.  
Since $T$ is an arbitrary positive number, we conclude that $x(t)\in \Sigma (d)$, for every $t\geq 0$. 
The assertions concerning the injectivity of the map $[0,+\infty [\ni t\mapsto x(t)$ and the unboundedness of $\{ x(t)\ | \ t\geq 0\}$ follow arguing as in the proof of Theorem \ref{t3} (ii).
%
 
 \subsection{Proof of Theorem \ref{t2nc}} 
 Let $x_0\in \Sigma (d)\cap \partial\mathcal{O}$. We want to show that there exists a nonconstant singular arc, starting at  $x_0$, and lying in $\R^n\setminus \mathcal O$ (except for the initial point). First, suppose   $\langle p,\nu(x_0)\rangle =0$, for every $p\in D^*d(x_0)$. Then, arguing as in the  proof of Theorem \ref{t2} (see the reasoning leading to \eqref{eq:dirproco}),   
we deduce that there exist a positive number $\sigma$ and a map, $[0,\sigma [\ni s\to x(s)\in \Sigma (d)$, such that 
\begin{itemize} 
\item[$(i)$] $x(\cdot )$ is continuous at $0$ and $x(0)=x_0$;
\item[$(ii)$]  $x(s)\not=x_0$, for every $s\in [0,\sigma [$;
\item[$(iii)$] $x(s)=x_0+s\nu (x_0) +o(s),\qquad \text{ as }s\to 0^+$.
\end{itemize} 
Notice that, Condition (iii) above ensures that, possibly reducing $\sigma$ and for every $s\in ]0,\sigma ]$, $x(s)\in \R^n\setminus \mathcal O$.  

So, let $\langle p_1, \nu (x_0)\rangle <0$, for some $p_1\in D^*d(x_0)$\footnote{Recall that $\langle p, \nu (x_0)\rangle \leq 0$ for all $p\in D^*d(x_0)$ (see Remark~\ref{r:dsns}).}. 
 Then, in order to apply   Theorem \ref{t:fleming} to $d$ at $x_0$, 
 it remains to show that 
 there exists $p_0\in \co D^*d(x_0) \setminus D^*d(x_0)$ such that 
 \begin{equation}\label{eq:dirprow} 
 p_0-t\nu (x_0)\notin \co D^*d(x_0),\qquad \forall t>0. 
 \end{equation} 
 We argue by contradiction assuming that 
 \begin{equation}\label{eq:nondirprow}
 \forall  p\in \co D^*d(x_0) \setminus D^*d(x_0)\quad \exists t>0:\quad p-t\nu (x_0)\in \co D^* d(x_0).
 \end{equation} 
 Since $ \co D^*d(x_0) $ is a compact set, from \eqref{eq:nondirprow} it follows that
 \begin{equation}\label{eq:la}
\forall  p\in \co D^*d(x_0) \setminus D^*d(x_0)\quad \exists t>0:\quad p-t\nu (x_0)\in D^* d(x_0) 
 \end{equation} 
for otherwise, by taking $t$ large enough, one would intersect the boundary of $ \co D^*d(x_0) \setminus D^*d(x_0)$ at a point at which \eqref{eq:nondirprow} fails. 

Notice that, since $x_0\in \Sigma (d)$,  there exists $p_2\in D^*d(x_0)\setminus \{ p_1\}$.  Hence,  by \eqref{eq:la}, we find that 
 for every $\lambda \in ]0,1[$ there exists $t_\lambda >0$ such that 
 \begin{equation}\label{eq:p1p2} 
p(\lambda ):= p_1 +\lambda (p_2-p_1) -t_\lambda \nu (x_0)\in D^*d(x_0). 
 \end{equation} 
 Moreover, recalling that $|p(\lambda )|^2=1$ if $p(\lambda )\in D^*d(x_0)$, we have that 
 $$
t_\lambda^2 - 2t_\lambda \langle \nu (x_0), p_1+\lambda (p_2-p_1)\rangle + |p_1+\lambda (p_2-p_1)|^2-1 =0.
 $$
Hence,
 \begin{multline}\label{eq:tlambda}
 t_\lambda =\langle \nu (x_0),p_1+\lambda (p_2-p_1)\rangle
 \\
 +\sqrt{  ( \langle p_1+\lambda (p_2-p_1),\nu (x_0)\rangle)^2 +1-|p_1 +\lambda (p_2-p_1)|^2 },  
 \end{multline}
 for every $\lambda\in ]0,1[$. 
Then, we get 
 \begin{multline*}
 \langle \nu (x_0) , p(\lambda )\rangle =\langle \nu (x_0) ,   p_1 +\lambda (p_2-p_1) \rangle -t_\lambda 
\\
=-\sqrt{  (\langle p_1 , \nu (x_0) \rangle +\lambda \langle  p_2-p_1 ,\nu (x_0)\rangle )^2 +1-| p_1+\lambda (p_2-p_1)|^2 }.  
\end{multline*}
Notice that  $(\langle  p_2-p_1 ,\nu (x_0)\rangle )^2-|p_2-p_1|^2 <0$. Indeed, we clearly  have that
$(\langle  p_2-p_1 ,\nu (x_0)\rangle )^2-|p_2-p_1|^2 \leq 0$. Should equality hold, one would have that 
$p_2=p_1+c\nu(x_0)$ for some $c\neq 0$. Then, since $|p_2|=1=|p_1|$, one would deduce that $c=-2\langle p_1,\nu(x_0)\rangle $.
This would in turn yield $\langle p_2,\nu(x_0)\rangle =-\langle p_1,\nu(x_0)\rangle >0$, while we know that $\langle p_2,\nu(x_0)\rangle \leq 0$.

Therefore, the function under the square root above is quadratic w.r.t. the variable $\lambda$.        
Thus, it is strictly monotone on a suitable connected open interval, $I\subset ]0,1[$. Furthermore, we deduce that also the continuous function,  
$$
I\ni \lambda \mapsto \langle \nu (x_0) , p(\lambda )\rangle ,\text{ is strictly monotone.} 
$$ 
 In other words, we have found that if \eqref{eq:la} holds then  
 \begin{equation}\label{eq:abs} 
 J:=\{ \langle \nu (x_0) , p(\lambda )\rangle \ | \ \lambda \in I\}\subset \R\text{ has positive measure.} 
 \end{equation} 
 So, the proof reduces to show that \eqref{eq:abs} fails. For this purpose, set 
 $$
\mathcal{S}^{n-1}_{-}=\{ p\in \R^n\ | \ |p|=1\text{ and }\langle p, \nu (x_0)\rangle <0\}
 $$
 and observe that  $D^*d(x_0)\subseteq \overline{\mathcal{S}}^{n-1}_{-}$.
   For $\delta>0$, we define 
 $$
 S_\delta :=\{ p\in  \mathcal{S}^{n-1}_{-}\ |\    \langle p,\nu (x_0)\rangle <-\delta \}, 
 $$
and fix  $\delta \in ]0,1[$ such that 
$$
p(\lambda ) \in  S_\delta ,\qquad \forall \lambda \in I,   
$$
where $p(\lambda )\in D^*d(x_0)$ are the points given in \eqref{eq:p1p2}.
 Since $\partial \mathcal{O}\in C^2$, there exist $r>0$  such that the ball $B_r( x_0+r\nu(x_0))$ is tangent (from the exterior) to $\partial \mathcal O$ at $x_0$.  Observe that, if $p\in S_\delta$ and $x_0-tp\in \partial \mathcal O$, for some $t>0$, then we have that $x_0-tp\notin   B_r( x_0+r\nu(x_0))$. 
This last condition can be written as $| tp +r\nu (x_0)|^2>r^2$, i.e.,  $t> -2r\langle p,\nu (x_0)\rangle$. 
 Hence, we find that if $p\in S_\delta$ and $x_0-tp\in \partial \mathcal O$ for some $t>0$, then  
 $$
 t>2r\delta =: r_0. 
 $$

Let us define  
\begin{equation}
\label{eq:X}
X=\Big\{ x\in \partial \mathcal{O}\ | \    u(x):=\frac{x_0-x}{|x-x_0|}\in S_\delta \Big\}. 
\end{equation}
We point out that, as a consequence of the above considerations,  $X$ is a nonempty open subset of $\partial\mathcal O\setminus \overline{B}_{r_0}(x_0)$. Furthermore,  $u:X\longrightarrow S_\delta$ is a smooth map between manifolds of the same dimension, $n-1$. 
We shall now use a classical argument of Differential Topology (see, e.g., \cite[page 88, Exercise 7]{GP}). 
 For $p\in S_\delta$, let $r(p)=\{ x_0-tp\ | \ t\geq 0\}$.   
Observe that the ray $r (p)$ is transversal to $X$  if and only if 
$p$ is a regular value\footnote{We recall that  a value $p$ is regular for $u$ if for every $x\in X$ such that $u(x)=p$ the differential $du(x)$ is surjective between the tangent space to $X$ at $x$ and the tangent space to $S_\delta$ at $u(x)$. Furthermore, since these tangent spaces have the same dimension, $n-1$,  $du(x)$ is surjective if and only if it is injective.} for the smooth map $u:X\to S_\delta$ defined  in \eqref{eq:X}.
%
%
In order to show that this is indeed the case, we note that, for every $x\in X$ and  every tangent vector, $\xi $, to $X$ at $x$, 
\begin{eqnarray*}
du(x)(\xi )&=&-\frac 1{|x-x_0|} \left (\xi -  \langle x-x_0, \xi \rangle \frac{x-x_0}{|x-x_0|^2}\right )
\\
&=&-\frac 1{|x-x_0|}\,  \Big (\xi -  \langle u(x), \xi \rangle  \, u(x)\Big ).
\end{eqnarray*}
So, $u(x)=:p$ is a critical value for $u$ (i.e., $du(x)$ fails to be injective) if and only if there is a tangent vector to $X$ at $x$, say $\xi$, such that 
$$
\xi - \langle p,\xi \rangle  p=0\,.
$$
Therefore, $p$ is a tangent vector to $X$ at $x$ as well.  
In particular, the set of the critical values of $u$ contains $\{ p(\lambda )\ | \ \lambda \in I\}$. Now, let us define the smooth function 
$$
f:X\longrightarrow \R,\quad f(x)= \langle \nu (x_0), u(x)\rangle , \quad (x\in X),   
$$
and observe that if a point is critical for $u$ then it is so for $f$. 
Then, $J$ is a subset of the critical values of $f$. Thus, by Sard's Lemma, we deduce the contradiction that  $J\subset \R$ is a zero measure set but, due to  \eqref{eq:abs}, it is of positive measure too.  
So, \eqref{eq:dirprow}  follows and the proof of Theorem \ref{t2nc} is completed.

\subsection{Proof of Theorem \ref{t:exsing}}
We want to show that $\partial \mathcal{O}\cap \Sigma (d)\not=\emptyset$. 
First of all, we point out that, by \eqref{eq:dini},  $d$ is differentiable on $I(k_0)\setminus\{k_0\}$. Therefore  
\begin{equation}\label{eq:nsi}
 \Sigma (d)\setminus\{k_0\}\subset  S(k_0).
\end{equation}

 In order to find a point in $ \Sigma (d)\cap\partial \mathcal{O}$, we take a constrained maximum point for $d$ on $\partial \mathcal{O}$. Let $x_0\in \partial \mathcal{O}$ be such a point.  
 We claim that $x_0\in S(k_0)$. In order to prove our claim, we argue by contradiction assuming that $x_0\in I(k_0)$. Then, 
$$
r_0:=d(x_0)=|k_0-x_0|.
$$ 
Since 
$$
|x-k_0|\le d(x)\le d(x_0),\qquad \forall x\in \partial \mathcal{O}, 
$$
we deduce that 
\begin{equation}\label{ain}
\partial \mathcal{O}\subset \overline{B_{r_0}(k_0)}\quad \text{ and, by convexity, }\quad \mathcal{O}\subset \overline{B_{r_0}(k_0)}.
\end{equation}
Since $\partial \mathcal{O}$ is of class $C^2$,  there exist $\delta>0$ and $\phi$ of class $C^{2}$ such that 
$$
\partial \mathcal{O}\cap B_\delta (x_0)=\{ x\in B_\delta (x_0)\ | \ \phi (x)=0\text{ and } D\phi (x)\not= 0\}  
$$
and 
$$
 \mathcal{O}\cap B_\delta (x_0)=\{ x\in B_\delta (x_0)\ | \ \phi (x)<0\}.  
$$
Since $x_0\in \partial \mathcal{O}\cap \partial B_{r_0} (k_0)$,   we have that %
$$
\frac{ x_0-k_0}{|x_0-k_0|}=\frac{D\phi (x_0)}{|D\phi (x_0)|}
$$
because, by \eqref{ain}$, \partial \mathcal{O}$ and $ \partial B_{r_0} (k_0)$ have the same tangent plane at $x_0$. 
Then, we find that, for $\epsilon >0$ suitably small, 
\begin{multline*}
x_0-\epsilon \frac{D\phi (x_0)}{|D\phi (x_0)|}=x_0-\epsilon \frac{(x_0-k_0)}{|x_0-k_0|}
\\
=\left ( 1-   \frac{ \epsilon}{|x_0-k_0|} \right )    x_0+ \frac{ \epsilon}{|x_0-k_0|} k_0\in \text{ int } \mathcal{O},  
\end{multline*}
where $\text{ int } \mathcal{O}$ denotes the interior of $\mathcal{O}$. 
Hence, we find the contradiction  $\text{ int } \mathcal{O}\cap [k_0,x_0[\not=\emptyset$ and $x_0\in I(k_0)$. Our claim $x_0\in S(k_0)$ follows. 

Next, we proceed to show that $x_0\in \Sigma (d)$. 
Assume, by contradiction, that $x_0\notin \Sigma (d)$.  
Then,  by the Lagrange multiplier rule in Lipschitz settings (see, e.g., \cite[Theorem 6.1.1]{C}), we find that the unit vector $Dd(x_0)$ is parallel to the outward unit normal to the obstacle $\mathcal{O}$ at $x_0$,  $\nu (x_0)$. Moreover, Lemma \ref{l:vi} ensures that $Dd(x_0)=-\nu (x_0)$ and there exists a length minimizer of the form $\gamma (t)=x_0+t\nu (x_0)$ for $t\in [0,t_*]$, with either $t_*=d(x_0)$ or $t_*$ is the first positive time $t$ such that $\gamma (t)\in \mathcal{O}$.  Now, both alternatives are impossible: the first one because it would imply that $x_0\in I(k_0)$, the second one because $\mathcal{O}$ is convex. So,  $x_0\in \Sigma (d)$. 
This completes our proof of Theorem \ref{t:exsing}.

 \appendix 
  
  \section{Proof of Theorem \ref{t:rb}}

Let $K\subset \R^n$ be a compact set such that $k_0\notin K$ and $K\cap \partial \mathcal{O}\not=\emptyset$. For $x\in K\cap \overline{\R^n\setminus \mathcal{O}}$ we define 
$$
E(x)=\inf_\gamma \int_0^1 \langle A(\gamma (t))\dot{\gamma}(t),\dot{\gamma}(t)\rangle  \, dt
$$
where the infimum is taken w.r.t. $\gamma \in AC([0,1];\overline{\R^n\setminus \mathcal{O}})$ with $\gamma (0)=x$ and $\gamma (1)=k_0$. 
We recall that
\begin{equation}\label{eq:ed}
d(x)=\sqrt{E(x)},\quad x\in K\cap \overline{\R^n\setminus \mathcal{O}} .
\end{equation}
Equation \eqref{eq:ed} describes a well-known property whose proof can be found, for instance, in \cite[page 93]{GHL}.  
Now, observe that, since $d>0$ on $K\cap \overline{\R^n\setminus \mathcal{O}}$, we have that  $E>0$ on $K\cap \overline{\R^n\setminus \mathcal{O}}$.
Furthermore, in \cite[Proposition 3.9]{CCMW}, it is shown that $E\in SC^{\frac 12}(K\cap \overline{\R^n\setminus \mathcal{O}})$. 
Then, it suffices to prove  that the square root of a positive semiconcave function is semiconcave. 
For this purpose, first we show that $\sqrt{E}$ is a Lipschitz function on $K\cap \overline{\R^n\setminus \mathcal{O}}$. 
Indeed, the existence of a constant $L$ such that 
$$
|E(x)-E(y)|\leq L |x-y|,\qquad \forall x,y\in K\cap \overline{\R^n\setminus \mathcal{O}}
$$
implies that, for every $x,y\in K\cap \overline{\R^n\setminus \mathcal{O}}$, 
$$
|\sqrt{E(x)}-\sqrt{E(y)}|\leq \frac {L}{\sqrt{E(x)}+\sqrt{E(y)}} |x-y|\leq L'|x-y| ,  
$$
 with 
 $$
 L'=\frac{L}{2\min_{x\in K\cap \overline{\R^n\setminus \mathcal{O}}} \sqrt{E(x)}}. 
 $$
 Furthermore, the fact that $E\in SC^{\frac 12}(K\cap \overline{\R^n\setminus \mathcal{O}})$ implies that, for every $x\in K\cap \overline{\R^n\setminus \mathcal{O}}$, we have that 
 $$
E(y)-E(x)\le \langle p, y-x\rangle +C |y-x|^{3/2}
 $$
 for every $y\in K\cap \overline{\R^n\setminus \mathcal{O}}$ such that $[x,y]\subset K\cap \overline{\R^n\setminus \mathcal{O}}$ and for every $p\in D^+E(x)$. Then, 
 \begin{multline}\label{eq:sca2}
 \sqrt{E(y)}\le \sqrt{E(x)} +\frac 1{\sqrt{E(x)}+\sqrt{E(y)}} (\langle p, y-x\rangle +C |y-x|^{3/2})
 \\
\le \sqrt{E(x)} +\frac 1{2\sqrt{E(x)}} \langle p, y-x\rangle +\left (  \frac 1{\sqrt{E(x)}+\sqrt{E(y)}}  -   \frac 1{2\sqrt{E(x)}}\right ) \langle p, y-x\rangle  
\\
+\frac {C}{\sqrt{E(x)}+\sqrt{E(y)}} |y-x|^{3/2}
\\
\le \sqrt{E(x)} +\frac 1{2\sqrt{E(x)}} \langle p, y-x\rangle +
C' |y-x|^{3/2} 
 \end{multline}
 with 
 $$
C'=\max_{x,y\in K\cap \overline{\R^n\setminus \mathcal{O}}}  \left ( \frac{C}{\sqrt{E(x)}+\sqrt{E(y)}}    +    \frac{ L L' \sqrt{|y-x|} }{2\sqrt{E(x)} [   \sqrt{E(x)}+\sqrt{E(y)}]} \right ),  
 $$
 i.e. for every $x,y\in K\cap \overline{\R^n\setminus \mathcal{O}}$ such that $[x,y]\subset K\cap \overline{\R^n\setminus \mathcal{O}}$ and for every $p\in D^+E(x)$
 \begin{equation}\label{eq:estfin}
 \sqrt{E(y)}\le \sqrt{E(x)} +\frac 1{2\sqrt{E(x)}} \langle p, y-x\rangle +
C' |y-x|^{3/2}. 
 \end{equation}
Now, let $x,y\in K\cap \overline{\R^n\setminus \mathcal{O}}$ such that $[x,y]\subset K\cap \overline{\R^n\setminus \mathcal{O}}$ and let $\lambda\in [0,1]$. Then, by \eqref{eq:estfin}, with $x$ replaced with $\lambda x+(1-\lambda )y$, we find 
 \begin{multline}\label{0}
\sqrt{E(y)}\le \sqrt{E(\lambda x+(1-\lambda )y )} 
 \\
 +\frac 1{2\sqrt{E(\lambda x+(1-\lambda )y)}}\lambda  \langle p, x-y\rangle +
C'\lambda^{3/2} |y-x|^{3/2},
 \end{multline}
 with $p\in D^+d( \lambda x+(1-\lambda )y)$. Analogoulsy, we find that 
 \begin{multline}\label{1}
\sqrt{E(x)}\le \sqrt{E(\lambda x+(1-\lambda )y )} 
 \\
 +\frac 1{2\sqrt{E(\lambda x+(1-\lambda )y)}}(1-\lambda ) \langle p, y-x\rangle +
C'(1-\lambda)^{3/2} |y-x|^{3/2}. 
 \end{multline}
 Taking the convex combination of \eqref{0} with \eqref{1}, we find that 
 \begin{multline*}\label{01}
\lambda \sqrt{E(x)}+(1-\lambda ) \sqrt{E(y)}- \sqrt{E(\lambda x+(1-\lambda )y )} 
 \\
\le C'   ((1-\lambda ) \lambda^{3/2}+\lambda (1-\lambda)^{3/2}) |y-x|^{3/2}\le 
 C'   (1-\lambda ) \lambda  |y-x|^{3/2}. 
 \end{multline*} 
 This completes our proof.

\begin{remark}
The above proof is based on the property that the square root of the positive function $E\in SC^{\frac 12}(K\cap \overline{\R^n\setminus \mathcal{O}})$ is itself fractionally semiconcave of order $1/2$. We note that, here, known results dealing with the semiconcavity of a composition (see, e.g., \cite[Proposition~2.1.12 (i)]{CS}) cannot be applied  for two reasons. First, they do not address semiconcave functions on a closed domain and, second, they do not provide a control of the fractional semiconcavity modulus.

\end{remark}

\section*{Declarations}
\begin{itemize}
\item {\bf Conflict of interest:} On behalf of all authors, the corresponding author states that there is no conflict of interest. 
\item{\bf Funding:} This work was partly supported by the National Group for Mathematical Analysis, Probability and Applications (GNAMPA) of the Italian Istituto Nazionale di Alta Matematica ``Francesco Severi''; moreover, the third author acknowledges support by the Excellence Department Project awarded to the Department of Mathematics, University of Rome Tor Vergata, CUP E83C18000100006. 
\item{ \bf Acknowledgment:} The authors are very grateful to the anonymous referee who provided constructive criticism that highly improved the quality of the paper. 
\end{itemize}

\end{document}